%
%
%



\documentclass[envcountsame,envcountsect]{svproc}

\usepackage[english]{babel} 

\usepackage{amssymb,amsmath}

\usepackage{bbm} 
\usepackage{enumerate}
\usepackage{graphicx}
\usepackage{float}
\usepackage{enumitem} 
\usepackage{mathtools} 
\usepackage{xcolor} 
\usepackage{soul} 
\sethlcolor{lightgray} 
\usepackage[linkcolor=blue]{hyperref}
\hypersetup{
  colorlinks   = true, 
  urlcolor     = blue, 
  linkcolor    = blue, 
  citecolor   = red 
}
\usepackage{mathrsfs} 
\usepackage{cite} 


 \usepackage{multicol}
 \usepackage[bottom]{footmisc}

\usepackage{url}

\usepackage{tikz-cd}
\usepackage{amsmath}
\usepackage{tikz}
\usepackage{pgfplots}
\usetikzlibrary{positioning}
\tikzstyle{line} = [draw, -latex']
\pgfplotsset{compat=newest}
\pgfplotsset{plot coordinates/math parser=false}
\newlength\figureheight
\newlength\figurewidth


\newcommand{\N}{\mathbb{N}}
\newcommand{\Z}{\mathbb{Z}}
\newcommand{\R}{\mathbb{R}}
\newcommand{\C}{\mathbb{C}}

\newcommand{\udef}{\mathllap} 

\newcommand{\vv}[2]{v_{#1}^{(#2)}}

\newcommand{\T}{\mathbb{T}}
\newcommand{\wt}{\widetilde}
\newcommand{\w}{\widetilde{w}}

\newcommand\numberthis{\addtocounter{equation}{1}\tag{\theequation}} 


\newcommand{\tw}{\textcolor{black}}
\newcommand{\mumin}{\mu_{\min}}

\makeatletter
\def\blfootnote{\gdef\@thefnmark{}\@footnotetext}
\makeatother

\DeclareMathOperator{\rank}{{\mathrm{rank}}}
\DeclareMathOperator{\linspan}{{\mathrm{span}}}

\DeclareMathOperator{\re}{{\mathrm{Re}}}

\DeclareMathOperator{\diag}{{\mathrm{diag}}}


\numberwithin{equation}{section} 

\begin{document}
\mainmatter              
\title{Sharp Decay Estimates in Local Sensitivity Analysis for Evolution Equations with Uncertainties: from ODEs to Linear Kinetic Equations}
\titlerunning{Sharp Decay Estimates: from ODEs to Linear Kinetic Equations}  
%
\author{
Anton Arnold\inst{1} \and Shi Jin\inst{2}
\and Tobias W\"{o}hrer\inst{3}}
\authorrunning{A. Arnold, S. Jin, T. W\"{o}hrer} 
%
%
\institute{Institute for Analysis and Scientific Computing, TU Vienna,\\ Wiedner Hauptstra{\ss}e 8--10, 1040 Vienna, Austria,\\
\email{anton.arnold@tuwien.ac.at},\\ website:
\url{https://www.asc.tuwien.ac.at/~arnold/}
\and
School of Mathematical Sciences, Institute of Natural Sciences, MOE-LSC, Shanghai Jiao Tong University,\\ Shanghai 200240, China,\\
\email{shijin-m@sjtu.edu.cn}\\
\and Institute for Analysis and Scientific Computing, TU Vienna,\\ Wiedner Hauptstra{\ss}e 8--10, 1040 Vienna, Austria,\\
\email{tobias.woehrer@tuwien.ac.at}
}

\maketitle
\begin{abstract}
We review the Lyapunov functional method for linear ODEs and give an explicit construction of such functionals that yields sharp decay estimates, including an extension to defective ODE systems. As an application, we consider three evolution equations, namely the linear convection-diffusion equation, the two velocity BGK model and the Fokker--Planck equation.

Adding an uncertainty parameter to the equations and analyzing its linear sensitivity leads to defective ODE systems. By applying the Lyapunov functional construction, we prove sharp long time behavior of order $(1+t^{M})e^{-\mu t}$, where $M$ is the defect and $\mu$ is the spectral gap of the system. The appearance of the uncertainty parameter in the three applications makes it important to have decay estimates that are uniform in the \emph{non-defective limit}.

\keywords{long time behavior, defective ODEs, kinetic equations, BGK models, Lyapunov functionals, uncertainty quantification, sensitivity analysis, non-defective limit, Fokker--Planck equations}
\blfootnote{The first author was partially supported by the FWF (Austrian Science Fund) funded SFB \#F65. The first and the third authors were partially supported by the FWF-doctoral school W 1245 as well as the University of Madison-Wisconsin, where part of this work was carried out. The second author  was partially supported by NSFC grants Nos. 31571071 and 11871297.}
\end{abstract}
\tableofcontents
\section{Introduction}
Kinetic models arise from \tw{mesoscopic} approximations of particle systems,
as such, they are not first principle equations, thus contain empirical
coefficients such as collision kernels in the Boltzmann equation, scattering
coefficients in transport equations, forcing or source terms, and measurement
errors in initial and boundary data, etc.  Such errors can be modeled by
uncertainties, or random inputs.  Quantifying these uncertainties have
important industrial and practical applications, in order to identify the
sensitivities of input parameters, validate the models, conduct risk management, and ultimately improve these models.

In recent years one has seen activities in conducting uncertainty quantifications (UQ) for kinetic equations, see \cite{Hu2017} for a recent review. One of the important analysis in UQ is the so-called local sensitivity analysis, in which one aims to understand how sensitive the solution depends on the input parameters \cite{smithbook}. For kinetic equations, a major tool to conduct sensitivity analysis for
random kinetic equations has been the coercivity, or more generally, hypocoercivity, which originated in the study of long-time behavior of kinetic equations (see
\cite{Villani-book, Dolbeault2015, MB, MN}). In such analysis, by using the
hypocoercivity of the kinetic operator, in a perturbative setting, namely,
considering solutions near the global equilibrium (see \cite{Guo-NS}), one can establish the long time convergence toward the local equilibrium with an exponential
time decay rate. Such analysis has been extended to kinetic equations with
random inputs, in both linear (see \cite{JLM, LiWang}) and nonlinear (see \cite{JinZhu, LiuJin, ShuJin}) settings. For stochastic Galerkin methods, hypocoercivity analysis even leads to exponential decay of numerical errors \cite{LiuJin, ShuJin}, while in classical numerical analysis one often obtains errors that grow exponentially in time. In these works, however,
the decay rates were not sharp.

Over the last two decades, entropy methods have become an important and robust tool to prove exponential convergence to equilibrium in kinetic and parabolic equations (see \cite{To,AMTU01,DeVi05,JuengelBook,Dolbeault2015}). But sharpness of the decay rate is only known in few cases (see \cite{AMTU01} for the situation in Fokker--Planck equations). For linear finite dimensional ODEs, however, a method of constructing Lyapunov functionals to reveal optimal decay rates has been known for a long time, see \cite{Arnold1978}, \S 22.4.

More recently in \cite{Arnold2014}, such strategies were transferred to Fokker-Planck (FP) equations on $\R^d$ and used to estimate the decay behavior of their solutions. For both, the ODE and the FP setting, one obtains the sharp \emph{exponential} decay rate, as long as none of the eigenvalues determining the spectral gap of the generator is defective\footnote{An eigenvalue is \emph{defective}, if its algebraic multiplicity is strictly greater than its geometric multiplicity.}. Recently this method was applied to PDEs that allow for a modal decomposition, like kinetic BGK models on the torus (see \cite{Achleitner2016,Achleitner2018}). They are relaxation-type models for collisional gases, introduced by the physicists Bhatnagar, Gross and Krook in \cite{BGK}.

In the defective case, however, the sharp decay behavior is of the form of a \emph{polynomial times an exponential}, and different strategies have to be applied.

In order to catch the sharp decay behavior in the case of a defective FP equation, one can use the spectral properties of the FP operator to split the solution into two subspace-invariant parts: The first one corresponds to the spectral gap and is finite dimensional; there the sharp (defective) decay behavior can be computed explicitly. The second part of the solutions corresponds to a subspace ``away'' from the spectral gap, and it has a faster exponential decay. This approach gives sharp decay functions for defective FP equations for various entropies as shown in \cite{AEW17}.

Alternatively, one can extend the Lyapunov functional by allowing it to be time dependent, see \cite{Monmarche2015}. In \S \ref{sec:ODE} of this paper we will translate that strategy from linear Fokker--Planck equations in $\R^d$ (as in Corollary 12 of \cite{Monmarche2015}) to the ODE setting. We shall also refine the method such that it can yield uniform decay bounds in the non-defective limit. This extension will be crucial for our PDE-applications presented in \S 3--\S 5: a convection-diffusion equation, a BGK model and a linear Fokker--Planck equation on $\R$ respectively. There we shall allow for uncertainty in the model coefficients and carry out a first (and for the convection-diffusion equations also second) order sensitivity analysis. In a local sensitivity analysis one tries to estimate the behavior of the (higher order)  derivatives of the solution
with respect to the input variables \cite{smithbook}. Estimates of such derivatives are not only important to
assess the sensitivity of the solution on the input parameters, they also provide regularity of the solution in the parameter space which is important to determine the convergence order of numerical
approximations in the random space \cite{gunzburger2014stochastic, JLM, jin2017asymptotic}.  In the Fourier space, the resulting evolution equations for the parametric derivatives are  mostly defective systems for which the sharp decay estimates can be obtained by using the Lyapunov functional approaches for defective deterministic systems. We would also like to point out that for the case of linear Fokker-Planck equation with a random drift, studied in \S 5, the global equilibrium is also random, while in previous sensitivity analysis  for uncertain kinetic equations the global equilibria were all deterministic \cite{JinZhu,LiuJin}.

\section{Lyapunov functionals for defective ODEs}\label{sec:ODE}
In this section we first review (from \cite{Achleitner2016}) the Lyapunov functional method for non-defective ODEs and then extend it to the defective case. \tw{This is based on constructing a norm \emph{adapted to the problem}} that allows to recover the sharp decay behavior.\\

\subsection{Construction of Lyapunov fuctionals}\label{subsec:pmatrix}


Let the matrix $C\in\C^{d\times d}$ be positive stable, i.e.\ its eigenvalues satisfy $\re (\lambda_i)>0$ for $i=1,\ldots,d$, and let $\mu:=\min_{i=1,\ldots, d} \re (\lambda_i)>0$. We want to find a Lyapunov functional for the equation
\begin{align}\label{eq:ODE}
\frac{d}{dt}x(t) = -C x(t),\quad x\in\C^d, t\geq 0
\end{align}
that allows to deduce the sharp decay rate of solutions with energy-type estimates.
For the construction of this functionals we consider the Jordan transformation of the matrix $C^H$\tw{, denoting the Hermitian transpose of the matrix $C$ (with eigenvalues $\overline{\lambda_i}$).} We shall distinguish different cases of eigenvalue defectiveness:
\begin{align}\label{eq:V}
C^H = V \operatorname{diag}(J_1,\ldots,J_N) V^{-1},
\end{align} where $J_n$ for $n\in\{1,\ldots, N\}$ are the Jordan blocks of $C^H$ with length $l_n\in\{1,\ldots,d\}$. A Jordan block of length one is an eigenvalue as a diagonal element, and a  Jordan block $J_n$ of length $l_n>1$ corresponds to a chain of generalized eigenvectors of $C^H$ of order $k$ satisfying
\begin{align}\label{eq:geneigenvector}
C^H \vv{n}{k} = \overline{\lambda}_n \vv{n}{k} + \vv{n}{k-1}, \quad k\in\{1,\ldots,l_n-1\},
\end{align}
where $\vv{n}{0}$ is an eigenvector of $C^H$, corresponding to $\overline{\lambda}_n$.
We denote the (semi-)\\norm $$|x|^2_P:=x^H P x,$$ for a Hermitian positive (semi-)definite  matrix $P\in\C^{d\times d}$ to be defined.
\subsection*{Case 1: $J_n$ is a Jordan block of length $l_n=1$ with $\re(\lambda_n) \geq \mu$.}
We define the rank 1 matrix\footnote{For $v,w\in\C^d$ we denote $v\otimes w:= v\cdot w^H$ where $\cdot$ is the matrix-matrix multiplication.} $P_n:= v_n^{(0)}\otimes \vv{n}{0}$ and get (cf. (2.51) in \cite{Arnold2014})
\begin{align}\label{eq:case1}
\frac{d}{dt}|x(t)|^2_{P_n} = -x^H (C^H P_n + P_n C) x \leq -2\mu x^H P_n x = -2\mu |x(t)|_{P_n}^2.
\end{align}
\subsection*{Case 2: $J_n$ is a Jordan block of length $l_n>1$ with $\re(\lambda_n)>\mu$.} As in the proof of Lemma 4.3 in \cite{Arnold2014}, we choose the coefficients $b_n^i>0$ as
\begin{align*}
b_n^1:=1;&&b^j_n:=c_j(\tau_n)^{2(1-j)},\quad j\in\{2,\ldots,l_n\},
\end{align*}
where $c_1:=1$, $c_j:=1+(c_{j-1})^2$ for $j\in\{2,\ldots,l_n\}$ and $\tau_n:=2(\re(\lambda_n)-\mu)> 0$. Then the matrix
\begin{align*}
P_n:=\sum_{i=1}^{l_n} b_n^i \vv{n}{i-1}\otimes \vv{n}{i-1}
\end{align*}
satisfies 
\begin{align}\label{eq:Pnmatrixineq}
C^H P_n+P_n C \geq 2\mu P_n
\end{align}
 and, as in \eqref{eq:case1}, one gets
\begin{align}\label{eq:Pndecay}
\frac{d}{dt}|x(t)|^2_{P_n} \leq -2\mu |x(t)|_{P_n}^2.
\end{align}

\subsection*{Case 3: $J_n$ is a Jordan block of length $l_n>1$ with $\re(\lambda_n)=\mu$.}
A translation of the strategy of Corollary 12 in \cite{Monmarche2015} to the ODE setting leads to the following construction. For each $m\in\{1,\ldots,l_n \}$, define the vector function
\begin{align}\label{eq:wnm}
w^m_n(t):= \sum^m_{k=1} \frac{t^{m-k}}{(m-k)!} \vv{n}{k-1}, \quad t\geq 0.
\end{align}
For $m\in \{2,\ldots,l_n\}$ we have
\begin{align*}
\frac{d}{dt}w^m_n(t) = \sum^{m-1}_{k=1} \frac{t^{m-k-1}}{(m-k-1)!}v_n^{(k-1)},
\end{align*}
from which it follows (using \eqref{eq:geneigenvector}) that
\begin{align}\label{eq:connectionCw}
C^H w^m_n(t) - \overline{\lambda_n} w^m_n(t) = \sum_{k=2}^m \frac{t^{m-k}}{(m-k)!}\vv{n}{k-2}= \frac{d}{dt}w^m_n(t).
\end{align}
Next we define the time dependent Hermitian positive semi-definite matrix
\begin{align}\label{eq:defPmn}
P^m_n(t):= w^m_n(t)\otimes w^m_n(t),\quad m\in\{1,\ldots,l_n\}.
\end{align}
Notice that $w^1_n(t) = \vv{n}{0}$. So for $m=1$ the computation for the estimate of $\frac{d}{dt}|x(t)|^2_{P^1_n(t)}$ is the same as in \eqref{eq:case1} (with equality since $\re (\lambda_n) = \mu$). For $m\in\{2,\ldots,l_n\}$ we use the identity \eqref{eq:connectionCw}, and compute
\begin{align*}
\frac{d}{dt} |x(t)|^2_{P^m_n(t)} &= \dot{x}^H(t) P^m_n(t) x(t) + x^H(t) P^m_n(t) \dot{x}(t) + x^H(t) \dot{P^m_n}(t)x(t)\\
&= - x^H(t) [ C^H w^m_n(t)\otimes w^m_n(t) + w^m_n(t)\otimes w^m_n(t) C ]x(t)\\
&\qquad + x^H(t) \left[(C^H w^m_n(t) - \overline{\lambda_n} w^m_n(t) ) \otimes w^m_n(t) \right]x(t)\\
&\qquad+ x^H(t)\left[ w^m_n(t) \otimes (C^H w^m_n(t) - \overline{\lambda_n} w^m_n(t)) \right]x(t)\\
&=  -2\mu x^H(t) w_n^m(t)\otimes w_n^m(t) x(t) = -2\mu |x(t)|^2_{P_n^m(t)}
\end{align*}
and directly obtain
\begin{align}\label{eq:case3}
|x(t)|_{P^m_n(t)}^2 = e^{-2\mu t} |x(0)|^2_{P^m_n(0)},\quad t\geq 0.
\end{align}
For arbitrary $\beta_n^m>0$, define
\begin{align}\label{eq:Pndefective}
P_n(t) := \sum^{l_n}_{m=1} \beta_n^m P^m_n(t).
\end{align}
We have $\linspan\{\vv{n}{0},\ldots,\vv{n}{l_n-1}\} = \linspan\{w^1_n(t),\ldots, w^{l_n}_n(t)\}$ for all $t\geq 0$, since the transformation matrix between these two sets is given by $e^{-\overline{\lambda_n} t} e^{J_n t}$.
Hence, the matrix $P_n(t)$ is positive definite on the subspace $\linspan\{\vv{n}{0},\ldots,\vv{n}{l_n-1}\}$. In the corresponding semi-norm $|\cdot|_{P_n(t)}^2$ the solution $x(t)$ satisfies
\begin{align*}
|x(t)|^2_{P_n(t)} = e^{-2 \mu t} |x(0)|^2_{P_n(0)}, \quad t\geq 0.
\end{align*}
For later convenience, we denote
\begin{align}\label{eq:defIndex}
I_\mu:=\{n\in\{1,\ldots,N\} \mid l_n>1, \re(\lambda_n) = \mu\},
\end{align}
to collect all indices with non-trivial Jordan blocks corresponding to $\mu$, i.e. corresponding to the above Case 3.
\subsection*{Combining the three cases:}
Now let us define
\begin{align}\label{eq:defP}
P(t) &:= \sum_{n\not\in I_\mu} \beta_n P_n  +\sum_{n\in I_\mu}P_n(t) =
\sum_{n\not\in I_\mu} \beta_n P_n  +\sum_{n\in I_\mu}\sum_{m=1}^{l_n} \beta_n^m P_n^m(t),
\end{align}
where $P_n$ and $P_n^m(t)$ are chosen, depending on the above Cases 1--3 of the corresponding Jordan block $J_n$. The weights $\beta_n>0$ are arbitrary in Cases 1--2, and the (arbitrary) $\beta_n^m>0$ pertain to Case 3. The matrix $P(t)$ is positive definite for every $t\geq 0$, since it has full rank by construction and it is the sum of positive semi-definite matrices. It satisfies
\begin{align}\label{eq:dtP}
\frac{d}{dt} |x(t)|^2_{P(t)} \leq -2\mu |x(t)|^2_{P(t)},
\end{align} and by applying Gronwall's lemma, we conclude:
\begin{lemma} \label{lem:Pexists}
Let $C\in\C^{d\times d}$ be positive stable and let $\mu>0$ be the smallest real part of all eigenvalues. Let $\diag(J_1,\ldots,J_N)$ be the Jordan normal form of $C^H$, where $J_n$ for $n\in\{1,\ldots,N\}$ is a Jordan block of length $l_n$ with eigenvalue $\overline{\lambda_n}$.
\begin{enumerate}
\item If all eigenvalues with real part equal to $\mu$ are non-defective, i.e.\ $I_\mu=\emptyset$, then there exists a time-independent Hermitian positive definite matrix $P\in\C^{d\times d}$, such that the solutions to \eqref{eq:ODE} satisfy
\begin{align}\label{eq:Pconstantdecay}
|x(t)|^2_P \leq e^{-2\mu t} |x(0)|^2_P.
\end{align}
\item If at least one eigenvalue with real part equal to $\mu$ is defective, i.e.\ $I_\mu \neq \emptyset$, then there exists a time-dependent matrix $P(t)\in\C^{d\times d}$, which is Hermitian positive definite for all $t\geq 0$ such that the solutions to \eqref{eq:ODE} satisfy
\begin{align}\label{eq:decayofPt}
|x(t)|^2_{P(t)} \leq e^{-2\mu t} |x(0)|^2_{P(0)}.
\end{align}
\end{enumerate}
\end{lemma}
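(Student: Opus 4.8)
The plan is to take the matrix $P$ (resp.\ $P(t)$) already assembled in \eqref{eq:defP} from the per-Jordan-block pieces $P_n$, $P_n^m(t)$ of Cases 1--3, and to verify the two features that make the statement work: that $P(t)$ is Hermitian positive definite for each fixed $t\ge 0$, and that it obeys the differential inequality \eqref{eq:dtP}. Granting these, one applies the (scalar) Gronwall lemma to the smooth nonnegative function $\phi(t):=|x(t)|_{P(t)}^2$: from $\phi'(t)\le -2\mu\,\phi(t)$ one obtains $\phi(t)\le e^{-2\mu t}\phi(0)$, which is exactly \eqref{eq:Pconstantdecay} in part~1 and \eqref{eq:decayofPt} in part~2. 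The split into the two parts is governed solely by whether a Case-3 block occurs, and a Case-3 block arises precisely for $n\in I_\mu$ (see \eqref{eq:defIndex}); hence if $I_\mu=\emptyset$ every Jordan block of $C^H$ falls under the time-independent Cases 1--2 and $P$ may be taken constant, while if $I_\mu\neq\emptyset$ at least one time-dependent summand $P_n^m(t)$ is present and $P=P(t)$ really depends on $t$.

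For positive definiteness I would argue block by block. Each summand in \eqref{eq:defP} is Hermitian and positive semi-definite by construction: $P_n=\vv{n}{0}\otimes\vv{n}{0}$ in Case 1, a nonnegative combination of rank-one terms $\vv{n}{i-1}\otimes\vv{n}{i-1}$ in Case 2, and $P_n^m(t)=w_n^m(t)\otimes w_n^m(t)$ in Case 3. Moreover the range of the contributions coming from the $n$-th block equals $\linspan\{\vv{n}{0},\dots,\vv{n}{l_n-1}\}$, the span of the $n$-th Jordan chain of $C^H$: this is clear in Cases 1--2, and in Case 3 it follows from the span identity $\linspan\{w_n^1(t),\dots,w_n^{l_n}(t)\}=\linspan\{\vv{n}{0},\dots,\vv{n}{l_n-1}\}$ recorded above, which holds because the change of basis between the two families is $e^{-\overline{\lambda_n}t}e^{J_nt}$, invertible for every $t\ge 0$. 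By \eqref{eq:V} these Jordan chains, for $n=1,\dots,N$, jointly form a basis of $\C^d$, so the ranges of the blocks together span $\C^d$; since a finite sum of Hermitian positive semi-definite matrices is positive definite as soon as the union of their ranges spans the whole space, $P(t)$ has full rank, i.e.\ is positive definite, for each $t\ge 0$.

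For the differential inequality I would differentiate \eqref{eq:defP} termwise via the product rule. For $n\notin I_\mu$ one has $\frac{d}{dt}|x(t)|_{P_n}^2\le -2\mu\,|x(t)|_{P_n}^2$, which is \eqref{eq:case1} in Case 1 and \eqref{eq:Pndecay} in Case 2 (the latter resting on the matrix inequality \eqref{eq:Pnmatrixineq}). For $n\in I_\mu$ and each $m$, the computation carried out above --- in which the two cross terms built from $C^Hw_n^m(t)-\overline{\lambda_n}w_n^m(t)$ are matched exactly by the time-derivative term $\dot P_n^m(t)$ through the identity \eqref{eq:connectionCw} --- gives the \emph{equality} $\frac{d}{dt}|x(t)|_{P_n^m(t)}^2=-2\mu\,|x(t)|_{P_n^m(t)}^2$ of \eqref{eq:case3}. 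Weighting by the positive constants $\beta_n$, $\beta_n^m$ and summing yields \eqref{eq:dtP}, and the Gronwall step above concludes.

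The substantive work sits not in this lemma but in the preceding constructions: the coefficients $b_n^i$ that make \eqref{eq:Pnmatrixineq} hold in Case 2 (a Young-type absorption of the Jordan off-diagonal, cf.\ Lemma 4.3 of \cite{Arnold2014}) and the time weights $w_n^m(t)$ of \eqref{eq:wnm} that make the cross terms cancel in Case 3. Taking those for granted, the one place in the proof of the lemma that deserves attention is the uniform-in-$t$ positive definiteness of the assembled $P(t)$ in part~2, i.e.\ ruling out a rank drop at any $t\ge 0$ once the time-dependent blocks are added; this is exactly where the invertibility of $e^{-\overline{\lambda_n}t}e^{J_nt}$ and the Jordan-basis structure \eqref{eq:V} are used.
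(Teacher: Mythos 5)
Your proposal is correct and follows essentially the same route as the paper: the paper's proof of the lemma consists precisely of assembling $P(t)$ as in \eqref{eq:defP} from the Case 1--3 blocks, noting positive definiteness (full rank, via the Jordan-chain basis and the span identity with $e^{-\overline{\lambda_n}t}e^{J_nt}$) and the termwise inequality \eqref{eq:dtP}, and then applying Gronwall's lemma. Your more explicit range/span argument for positive definiteness is just a fleshed-out version of the paper's ``full rank by construction'' remark.
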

For further details on the algebraic interpretation of the time-independent matrix $P$, we refer to the remarks following Lemma 4.3 in \cite{Arnold2014}. See Example 2.2 in \cite{AAS} (with $\omega\neq 0$) for an ODE example and the relevance of the modified (time-independent) $P$-norm for the trajectories of the ODE.

\begin{remark}
The matrix $P(t)$ is --- with the construction described above --- not unique. For one, arbitrary coefficients $\beta_n,\beta_n^m>0$ in the definition of $P(t)$ in \eqref{eq:defP} are admissible. Secondly, the construction depends on the specific choice of (generalized) eigenvectors fixed in \eqref{eq:geneigenvector}.
\end{remark}

\begin{remark}\label{rem:Pmatrixform}
The matrix $P(t)$ can also be written as a matrix product:
\begin{align*}
P(t) = V e^{Jt}\Sigma(t)B(Ve^{Jt})^H,
\end{align*}
with $V$ and $J$ from \eqref{eq:V},
\begin{align*}
B:=\diag(\underbrace{\beta_1^1,\ldots,\beta_1^{l_1}}_{l_1 \text{ entries}},\ldots,\underbrace{\beta_N^1,\ldots,\beta_N^{l_N}}_{l_N \text{ entries}})\in \R^{d\times d},
\end{align*}
with notation $\beta_n^m:=\beta_n$ for each $n\not\in I_\mu$ and corresponding $m\in\{1,\ldots,l_n\}$ and
\begin{align*}
\Sigma(t): = \diag(\underbrace{e^{-2\re(\lambda_1)t},\ldots,e^{-2\re(\lambda_1) t}}_{l_1 \text{ times}},\ldots,\underbrace{e^{-2\re(\lambda_N) t},\ldots,e^{-2\re(\lambda_N) t}}_{l_N \text{ times}})\in \C^{d\times d}.
\end{align*}
This representation of $P(t)$ directly implies that $\det P(t) \equiv \det P(0)$ (cf.\ Fig.\ \ref{fig:Pplottime}).
\end{remark}

In the following remark and in Example \ref{ex:geo}, we investigate the geometry of the modified norms of Lemma \ref{lem:Pexists}.
\begin{remark}\label{rem:impactangle}
For an ODE \eqref{eq:ODE}, we distinguish different eigenvalue settings of the matrix $C\in\C^{d\times d}$, in order to isolate the interesting phenomena.

\begin{itemize}
\item \emph{Case 1: $C$ is in Case 1 of Lemma \ref{lem:Pexists}:} Due to Lemma \ref{lem:Pexists}, there exists a time-independent $P$-norm such that solutions decay as \eqref{eq:Pconstantdecay}. The geometric reason for the strict decay is the following: This specific norm is modified such that the trajectories of solutions to the ODE are never tangential to the $P$-norm level curves $\{x\in\C^d \mid |x|_P^2 = \emph{const.}\}$ (cf. Fig.\ \ref{fig:Pploteps}).

To prove this, denote $f(x):=x^HPx$. Then, the normal vector of the $P$-norm level curve at point $x\in\C^d \setminus \{0\}$ is given as the $P$-norm gradient of $f(x)$, i.e.\ $\eta(x):=\nabla_P f(x) = P^{-1}\nabla f(x) = 2x$ (see, e.g.\ (2.1.13) in \cite{jost} for gradients of Riemannian manifolds). The (backwards-in-time facing) solution tangent vector at point $x$ is given as $-\dot{x}=Cx$.  Due to the matrix inequality \eqref{eq:Pnmatrixineq},  $\eta(x)$ and $-\dot{x}$ are never perpendicular, i.e. the $P$-norm angle between them is bounded from below:
\begin{align*}
\frac{\langle x, Cx\rangle_P }{|x|_P |Cx|_P} &= \frac{x^HPC x }{|x|_P |Cx|_P} =  \frac12\frac{x^H(C^TP + PC)x }{|x|_P |Cx|_P} \\
&\geq \mu \frac{x^HPx }{|x|_P |Cx|_P} = \mu \frac{|x|_P }{ |Cx|_P} \geq \frac{\mu}{ |C|_P}  >0,
\end{align*}
where $|C|_P$ denotes the matrix norm induced by the vector norm $|x|_P$.

\item \emph{Case 2: $C$ is in Case 2 of Lemma \ref{lem:Pexists}}: First, for the time-independent matrix $P_\epsilon$, as defined in \cite{Arnold2014}, Lemma 4.3, the analogous result as for the above case is true. The calculation is identical, up to replacing $\mu$ by $\mu-\epsilon$ (cf. Example \ref{ex:geo}). Thus, also in the defective case, the \emph{solutions are never tangential to the level curves of the $P_\epsilon$-norm. }

The $P(t)$-norm of Case 2 in Lemma \ref{lem:Pexists} has a different geometric effect on solutions due to its time-dependency. In fact, a solution $x(t)$ can even be tangential to the $P(t)$-norm level curves for all times, while still maintaining sharp exponential decay, as Example \ref{ex:geo} below shows.

\item \emph{Case 3: All eigenvalues of $C$ have real part $\mu$ (defective or non-defective)}: For $t\geq 0$, \emph{the $P(t)$-angle between $\eta(x(t))$ and $-\dot{x}(t)$ stays constant}, i.e.\
\begin{align}
\frac{\langle x(t),Cx(t)\rangle_{P(t)}}{|x(t)|_{P(t)}|Cx(t)|_{P(t)}} = \frac{\langle x(0),Cx(0)\rangle_{P(0)}}{|x(0)|_{P(0)}|Cx(0)|_{P(0)}}.
\end{align}
Indeed, in this case, \eqref{eq:dtP} becomes an equality (with $P(t)\equiv P(0)$, if all eigenvalues are non-defective), and hence $|x(t)|_{P(t)}^2 = e^{-2\mu t}|x(0)|_{P(0)}^2$ for all $t\geq 0$. \tw{In the following computation we use the polarization identity in the first and last step. The second identity follows from the fact that, if $x(t)$ is a solution to \eqref{eq:ODE}, so is $(I+C)x(t)$ and $(I+iC)x(t)$ (with initial conditions $(I+C)x(0)$ and $(I+iC)x(0)$, respectively):}
\begin{align*}
&\frac{\langle x(t),Cx(t)\rangle_{P(t)}}{|x(t)|_{P(t)}|Cx(t)|_{P(t)}} = \frac14\frac{ |x(t) + Cx(t)|_{P(t)}^2 - |x(t) - Cx(t)|_{P(t)}^2 }{e^{-\mu t}|x(0)|_{P(0)} e^{-\mu t}|Cx(0)|_{P(0)}}\\
&\qquad \qquad+\frac14 \frac{i|x(t)-iCx(t)|_{P(t)}^2 - i|x(t) + iCx(t)|_{P(t)}^2}{e^{-\mu t}|x(0)|_{P(0)} e^{-\mu t}|Cx(0)|_{P(0)}}\\
&\qquad  =\frac14 \frac{ e^{-2\mu t}|x(0) + Cx(0)|_{P(0)}^2 -e^{-2\mu t} |x(0) - Cx(0)|_{P(0)}^2}{e^{-\mu t}|x(0)|_{P(0)} e^{-\mu t}|Cx(0)|_{P(0)}}\\
&\qquad \qquad+\frac14 \frac{ e^{-2\mu t}i|x(0)-iCx(0)|_{P(0)}^2 -  e^{-2\mu t}i|x(0) + iCx(0)|_{P(0)}^2}{e^{-\mu t}|x(0)|_{P(0)} e^{-\mu t}|Cx(0)|_{P(0)}}\\
&\qquad=\frac{\langle x(0),Cx(0)\rangle_{P(0)}}{|x(0)|_{P(0)}|Cx(0)|_{P(0)}}
\end{align*}
for all $t\geq 0$.
\end{itemize}
\end{remark}

\begin{example} \label{ex:geo}
	Consider the IVP $\dot{x}=-Cx$, $x(0)=(6, 6)^T$, with matrix
	\begin{align*}
	C=\begin{pmatrix}
	1& \frac12\\
	-\frac12& 0
	\end{pmatrix},
	\end{align*}
	which has the defective eigenvalue and spectral gap $\lambda=\mu=\frac12$ and the (generalized) eigenvectors
	\begin{align*}
	w^{(0)}=\frac{1}{\sqrt{2}}\begin{pmatrix}1,& -1 \end{pmatrix}^T,&& w^{(1)}=\frac{1}{\sqrt{2}}\begin{pmatrix}1,&1 \end{pmatrix}^T.
	\end{align*}
	Our goal is to obtain a better geometric understanding of the necessity of a time-dependent norm for sharp decay estimates of solutions. To this end, we compare the here presented Lyapunov functional constructions with functionals considered in \cite{Arnold2014}.
	
	The naive approach of using the Euclidean norm of the solution $x(t)$ exhibits non strict decay (the dashed curve in Fig.\ \ref{fig:Pploteuclid} has a horizontal tangent at $t=1$). The time-independent norm $|\cdot|_{P_\epsilon}$, as defined in \cite{Arnold2014}, Lemma 4.3, with the matrix
	\begin{align*} 
	P_\epsilon = 
	\frac{1}{\sqrt{2}}\begin{pmatrix}
	\frac{1}{2 \epsilon^2}+1 & \frac{1}{2 \epsilon^2}-1\\
	\frac{1}{2 \epsilon^2}-1& \frac{1}{2 \epsilon^2}+1
	\end{pmatrix},\quad \epsilon>0,
	\end{align*}
	yields uniform exponential decay, but the rate $\mu-\epsilon$ is not sharp. See Fig.\ \ref{fig:Pploteuclid} for the decay plot and Fig.\ \ref{fig:Pploteps} for a geometric reasoning why a modified norm can yield exponential decay.
	
	The $P(t)$-norm, with matrix
	\begin{align*}
	P(t)= 
	\frac12 \begin{pmatrix}
	t^2 +2t + 2& t^2\\
	t^2& t^2-2t+2
	\end{pmatrix},
	\end{align*}
	as defined in \eqref{eq:defP} (with weights $\beta^1=\beta^2=1$),
	provides the sharp exponential decay $$|x(t)|_{P(t)}^2 = e^{-t}|x(0)|_{P(0)}^2 = e^{-t}|x(0)|^2_2,\qquad t\geq 0.$$ 
	See Figures \ref{fig:Pplottime}--\ref{fig:Pplotexp} for the  geometric evolution of the $P(t)$-norm.
	
	Choosing the initial condition $\tilde{x}(0)= (0,7)^T$, yields the solution $\tilde{x}(t)$ that is tangential to the $P(t)$-norm level curve for each $t\geq 0$, while the exponential decay of the solution in $P(t)$-norm is still sharp, see Figure \ref{fig:Pplotexp_tang}.
	
	\hfill$\Diamond$
	\begin{figure}[ht]
		\centering
		\includegraphics[scale=0.6]{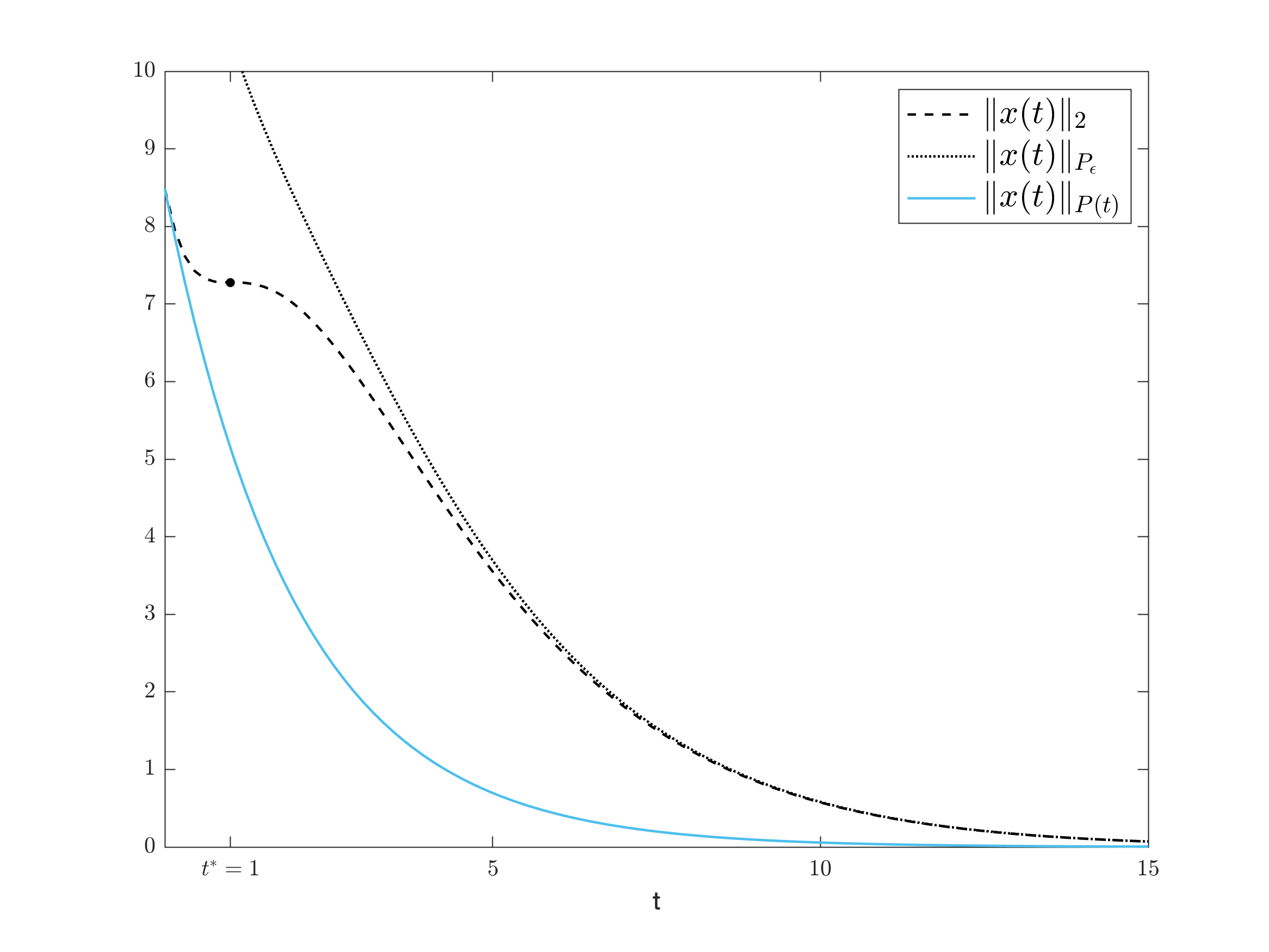}
		\caption{The dashed line shows the decay of the solution in the Euclidean norm. It initially exhibits a wavy behavior, where at time $t^*=1$, there is no strict decay at all. The dotted line describes the solution in $P_\epsilon$-norm with $\epsilon=0.4$. It yields uniform exponential decay, however, the decay rate is not sharp. 
			The solid line shows the decay of $|x(t)|_{P(t)}$ with sharp exponential rate $e^{-\frac12 t}$.
		}
		\label{fig:Pploteuclid}
	\end{figure}
	
	\begin{figure}[ht]
		\centering
		\includegraphics[scale=0.6]{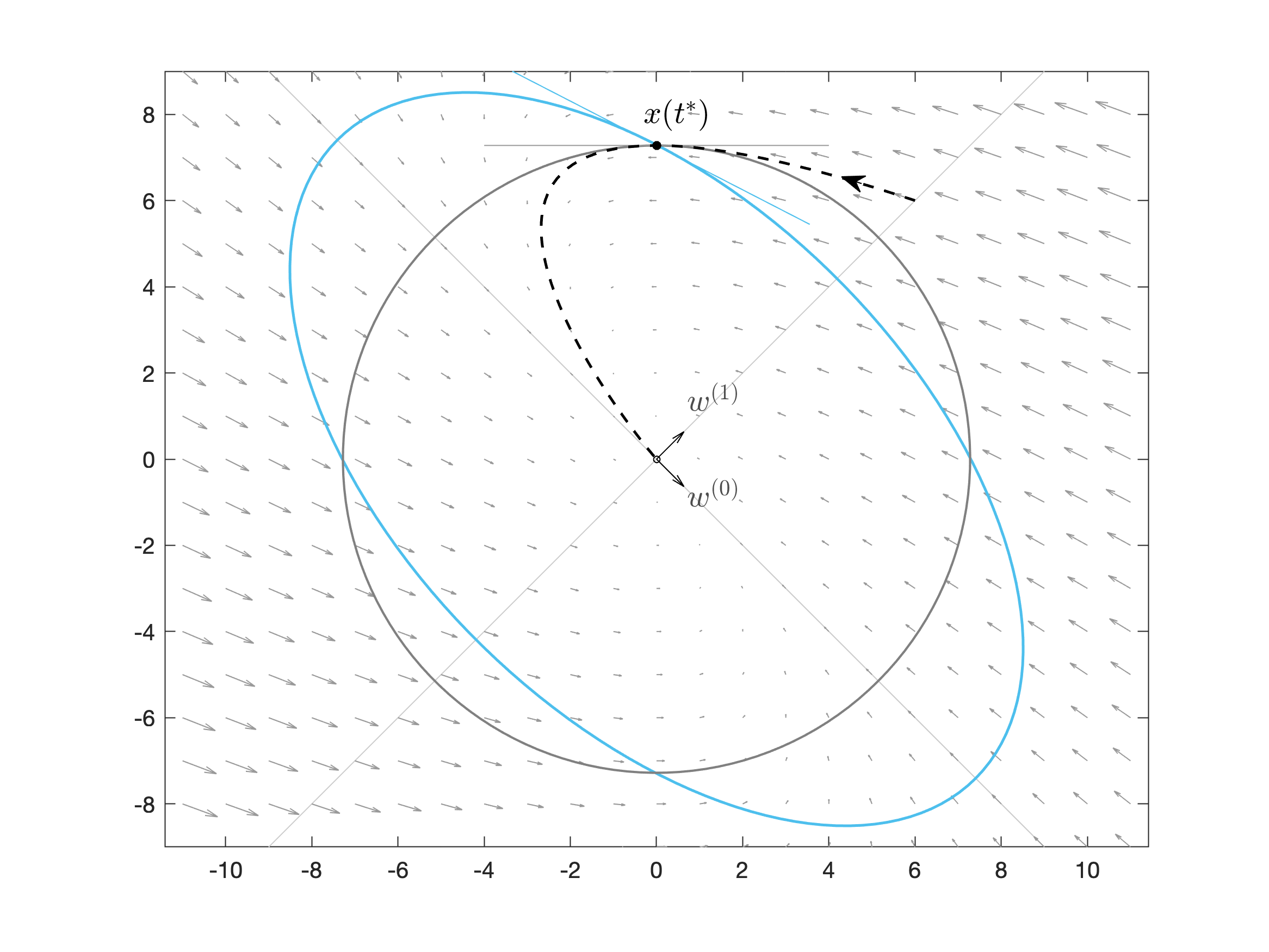}
		\caption{The dashed line shows the solution trajectory $x(t)$. At the marked point $x(t^*)$, the solution is tangential to the Euclidean level curve. This implies non-strict decay in the Euclidean norm (cp.\ Fig.\ \ref{fig:Pploteuclid}) at $t^*=1$. The ellipse represents a level curve of the $P_\epsilon$-norm (with $\epsilon=0.4$). It modifies the geometry such that the solution is never tangential to the level curves of $|\cdot|_{P_\epsilon}$. This assures strict exponential decay in the $P_\epsilon$-norm, however the rate is not sharp.}
		\label{fig:Pploteps}
	\end{figure}
	
	\begin{figure}[ht]
		\centering
		\includegraphics[scale=0.6]{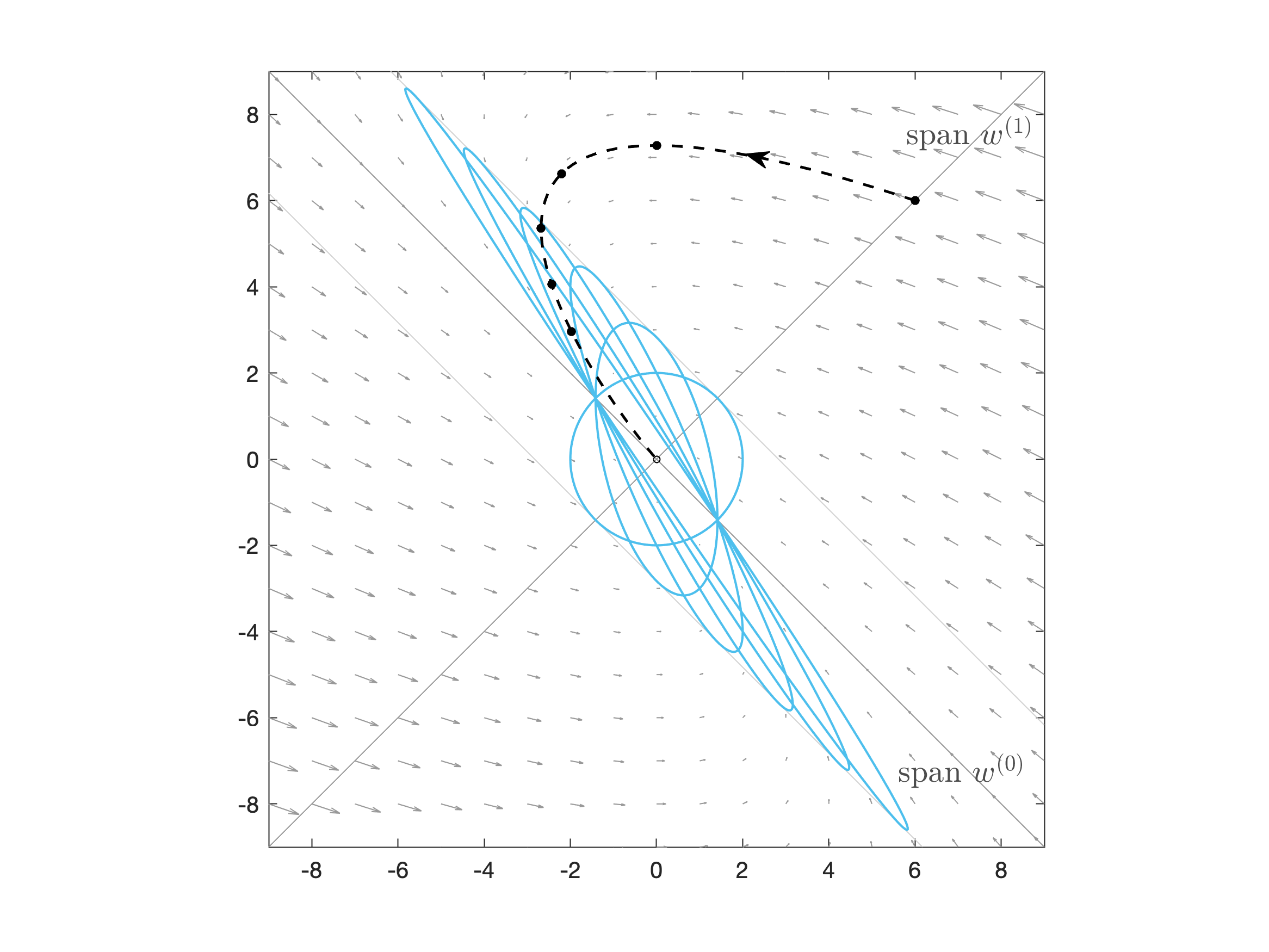}
		\caption{The dashed line in the plot describes the solution $x(t)$ with the marked points $x(0), x(1),\ldots,x(5)$. Additionally, the level curves of $\{x\in\R^2 \mid |x|_{P(t)}^2 = 4\}$ for $t=0,1,\ldots,5$ are plotted. In direction of the eigenvector of the matrix $C$, $w^{(0)}$, the distances stay constant in time. The area spanned by each ellipse-shaped level curve stays constant (cf. Remark \ref{rem:Pmatrixform}), while the semi-major axis of the ellipse stretches out and tilts towards the eigenvector axis $w^{(0)}$ as time increases. The stretch is linear in $t$ in the direction of $\pm w^{(0)}$: For arbitrary $t\geq 0$, the point $\pm\sqrt{2}(1-t, 1+t)^T$ is on the ellipse with the tangent $(\pm\sqrt{2}, \pm\sqrt{2})^T+\linspan\{w^{(0)}\}$.
		}
		\label{fig:Pplottime}
	\end{figure}
	
	\begin{figure}[ht]
		\centering
		\includegraphics[scale=0.6]{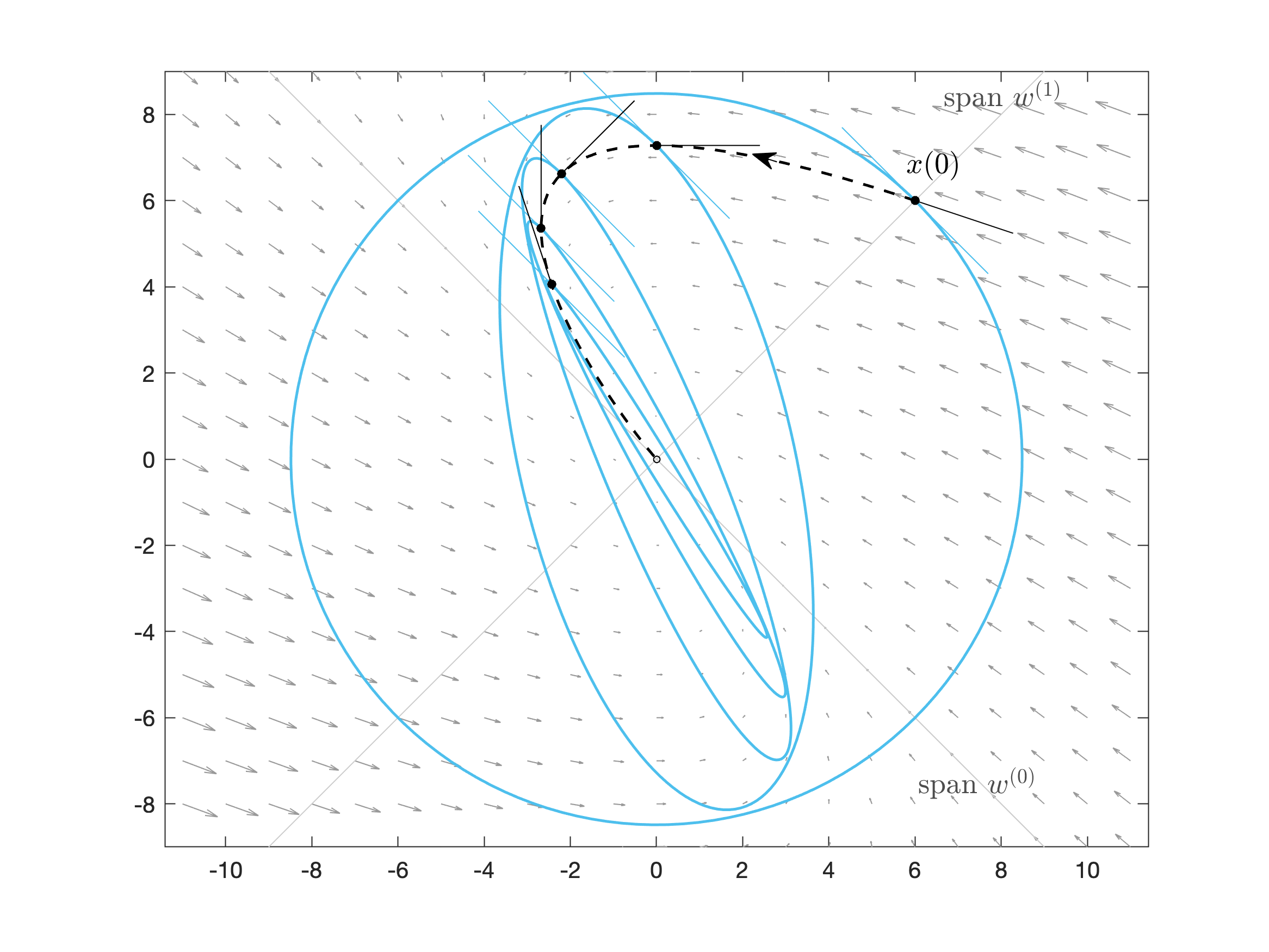}
		\caption{The level curves of $\{x\in\R^2 \mid |x|_{P(t)}^2 = e^{-t}|x(0)|^2_2\}$ for $t=0,1,\ldots,4$ are plotted (cp. Fig.\ \ref{fig:Pplottime}). They intersect with the solution trajectories exactly at the marked points $x(0),x(1),\ldots,x(4)$, which corresponds to the statement of Lemma \ref{lem:Pexists}, Case 2. Notice that the tangents of the level curves of $|\cdot|_{P(t)}$  at $x(t)$ are all parallel to each other. The intersection angle in the $P(t)$-norm is time-independent, see Remark \ref{rem:impactangle}, Case 3.
		}
		\label{fig:Pplotexp}
	\end{figure}
	
	\begin{figure}[ht]
		\centering
		\includegraphics[scale=0.6]{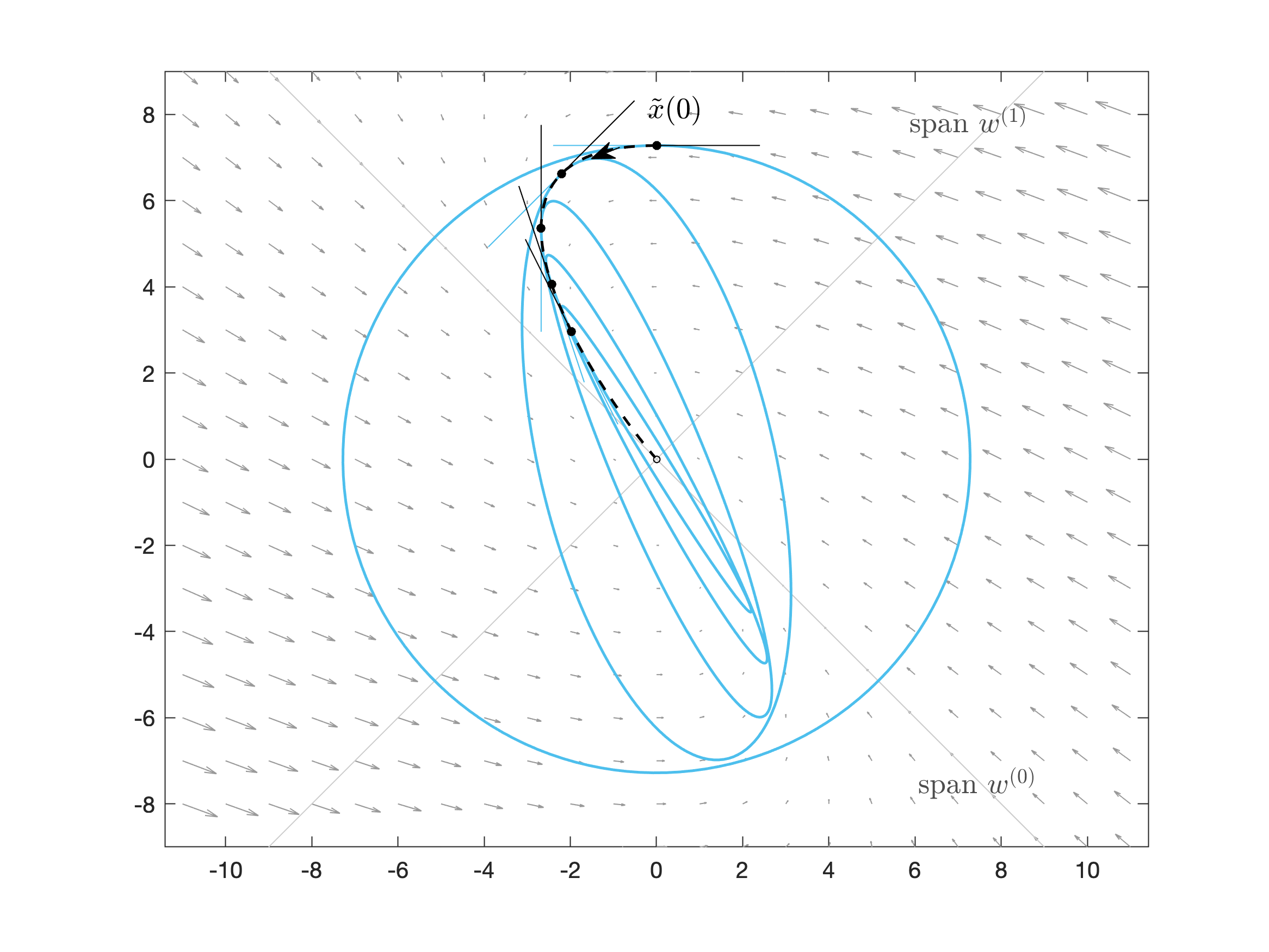} 
			\caption{The level curves of $\{x\in\R^2 \mid |x|_{P(t)}^2 = e^{-t}|\tilde{x}(0)|^2_2\}$ for $t=0,1,\ldots,4$ are plotted analogous to Fig.\ \ref{fig:Pplotexp}, but here for the initial value $\tilde{x}(0)=(0, 7)^T$. The solution $\tilde{x}(t)$ is tangential to the $P(t)$-norm level curves for each $t\geq 0$.
		}
		\label{fig:Pplotexp_tang}
	\end{figure}
	
	
\end{example}

Lemma \ref{lem:Pexists} shows that the $P(t)$-norm of any solution to \eqref{eq:ODE} decays exponentially. But due to the time dependence of the norm itself, it is not evident that this is an appropriate functional to capture the sharp decay rate of solutions. Hence, we shall next compare the $P(t)$-norm to the Euclidean norm.

An arbitrary Hermitian positive definite matrix $P\in\C^{n\times n}$ satisfies
\begin{align}\label{eq:normequiv}
\lambda^P_{\min} I \leq P \leq \lambda^P_{\max} I,
\end{align}
where $\lambda_{\min}^P$ is the smallest and $\lambda_{\max}^P$ is the largest eigenvalue of $P$. Using this inequality for $P(t)$, and the decay estimate \eqref{eq:decayofPt}, leads to the Euclidean decay estimate
\begin{align*}
|x(t)|^2_2 &\leq (\lambda_{\min}^{P(t)})^{-1}|x(t)|_{P(t)}^2 \leq (\lambda_{\min}^{P(t)})^{-1}e^{-2\mu t} |x(0)|_{P(0)}^2\\
& \leq (\lambda_{\min}^{P(t)})^{-1}\lambda^{P(0)}_{\max}e^{-2\mu t} |x(0)|_2^2.
\end{align*}
But here the decay behavior is ``hidden'' in the smallest eigenvalue of $P(t)$. The true qualitative behavior of $|x(t)|_2^2$ will be derived next.

The following technical Lemma \ref{lem:PtP0est} allows us, in the subsequent step, to estimate the time-dependency of the $P_n^m(t)$-semi-norm. The strategy of the proof has already been used for Corollary 12 of \cite{Monmarche2015} in the setting of Fokker--Planck equations on $\R^d$. However, our refined estimate here, for one, provides an upper bound for the time-dependence of a more general class of modified norms. And additionally, the constants appearing in the estimate are explicit, depending on $m$ and a parameter $\theta$ which, later on, allows to optimize the constants in the decay estimate of solutions to the ODE \eqref{eq:ODE}.
\begin{lemma}\label{lem:PtP0est}
For linearly independent vectors $v^1,\ldots,v^m\in\C^d$ define
	\begin{align}
	\hat{w}^m(t):&=\xi^m v^m + \sum_{j=1}^{m-1} \xi^j(t) v^j,
	\end{align}
	where $\xi^j(t)$ for $j\in\{1,\ldots,m-1\}$ are (arbitrary) real-valued polynomials in $t\geq 0$, and $\xi^m>0$. Furthermore let
	\begin{align*}
	\hat{P}^m(t):= 	\hat{w}^m(t)\otimes 	\hat{w}^m(t), \quad t\geq 0; \qquad Q^j:=v^j\otimes v^j, \quad j\in\{1,\ldots,m\}. 
	\end{align*}
	Then, the following inequality holds for every $x\in\C^d$, $\theta\in(0,1)$ and $t\geq 0:$
	\begin{align*}
	|x|_{\hat{P}^m(t)}^2 \geq (1-\theta) (\xi^m)^2|x|^2_{Q^m} - \left(\frac{(m-1)^2}{\theta}-1 \right)\sum_{k=1}^{m-1} (\xi^k(t))^2 |x|^2_{Q^k}.
	\end{align*}
\end{lemma}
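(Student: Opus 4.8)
The plan is to expand the quadratic form $|x|^2_{\hat P^m(t)} = |\langle \hat w^m(t), x\rangle|^2$ directly, using the definition $\hat w^m(t) = \xi^m v^m + \sum_{j=1}^{m-1}\xi^j(t) v^j$. Writing $a := \langle v^m, x\rangle$ (so that $(\xi^m)^2|x|^2_{Q^m} = (\xi^m)^2 |a|^2$) and $b_j := \langle v^j, x\rangle$ for $j=1,\dots,m-1$, we have $\langle \hat w^m(t), x\rangle = \xi^m a + \sum_{j=1}^{m-1}\xi^j(t) b_j$, and therefore
\begin{align*}
|x|^2_{\hat P^m(t)} = \Bigl| \xi^m a + \sum_{j=1}^{m-1}\xi^j(t) b_j\Bigr|^2 .
\end{align*}
The goal is to bound this from below by $(1-\theta)(\xi^m)^2|a|^2$ minus a controlled multiple of the $|b_j|^2$. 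The key elementary fact I would use is the reverse/weighted triangle inequality in the form: for complex numbers $z, w$ and any $\theta\in(0,1)$, $|z+w|^2 \ge (1-\theta)|z|^2 - (\tfrac{1}{\theta}-1)|w|^2$, which follows from $2\,\mathrm{Re}(z\bar w) \ge -\theta|z|^2 - \tfrac{1}{\theta}|w|^2$ (Young's inequality). Applying this with $z = \xi^m a$ and $w = \sum_{j=1}^{m-1}\xi^j(t) b_j$ gives
\begin{align*}
|x|^2_{\hat P^m(t)} \ge (1-\theta)(\xi^m)^2|a|^2 - \Bigl(\tfrac{1}{\theta}-1\Bigr)\Bigl|\sum_{j=1}^{m-1}\xi^j(t) b_j\Bigr|^2 .
\end{align*}

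The remaining step is to bound the cross-term $\bigl|\sum_{j=1}^{m-1}\xi^j(t) b_j\bigr|^2$ from above by a sum of the individual $(\xi^j(t))^2|b_j|^2$. Here I would use Cauchy--Schwarz (or the inequality between the $1$-norm and $2$-norm on $\C^{m-1}$): $\bigl|\sum_{j=1}^{m-1}\xi^j(t) b_j\bigr|^2 \le (m-1)\sum_{j=1}^{m-1}(\xi^j(t))^2|b_j|^2$. Substituting, the coefficient in front of $\sum_{k=1}^{m-1}(\xi^k(t))^2|b_k|^2$ becomes $(m-1)\bigl(\tfrac{1}{\theta}-1\bigr) = \tfrac{m-1}{\theta} - (m-1)$. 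Since $m\ge 1$ we have $(m-1) \le (m-1)^2$, so this is bounded above by $\tfrac{(m-1)^2}{\theta} - (m-1) \le \tfrac{(m-1)^2}{\theta} - 1$ when $m\ge 2$ (and for $m=1$ the sum is empty and the claimed inequality reduces to $|x|^2_{\hat P^1(t)} \ge (1-\theta)(\xi^1)^2|x|^2_{Q^1}$, which is immediate, indeed with $\theta$ superfluous). Recalling $|x|^2_{Q^k} = |b_k|^2$ and $|x|^2_{Q^m} = |a|^2$, this yields exactly the stated bound.

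The main obstacle is essentially bookkeeping rather than a genuine difficulty: one must choose the split of the weakening factors cleanly so that the final constant comes out as $\tfrac{(m-1)^2}{\theta}-1$ rather than something messier, and one must be slightly careful with the trivial cases ($m=1$, and the borderline where $\tfrac{m-1}{\theta}-(m-1)$ versus $\tfrac{(m-1)^2}{\theta}-1$). A mild alternative at the Cauchy--Schwarz step would be to distribute Young's inequality term by term across the $m-1$ summands instead of grouping them, which gives the same $(m-1)$-type loss; either route works, and neither presents a real obstruction. No use of the linear independence of the $v^j$ is needed for this lemma — that hypothesis matters only later, to ensure $\hat P^m$ behaves as a genuine semi-norm with the right kernel — so I would not invoke it in the proof.
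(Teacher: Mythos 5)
Your proof is correct, and it takes a genuinely different (and somewhat more elementary) route than the paper's. The paper proves the statement as a matrix inequality: it expands $\hat w^m(t)\otimes \hat w^m(t)$ into the diagonal terms $(\xi^k)^2Q^k$ plus \emph{all} off-diagonal cross terms $\xi^i\xi^j\,v^i\otimes v^j$, controls each pair --- including the cross terms among $v^1,\dots,v^{m-1}$ --- with the pairwise matrix Young inequality $v^i\otimes v^j+v^j\otimes v^i\geq-\tfrac1\alpha Q^i-\alpha Q^j$ (see \eqref{eq:vivj}), then reorders the double sum, bounds the worst coefficient by $1-\alpha(m-1)$, and sets $\theta=\tfrac{m-1}{\alpha}$. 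You instead fix $x$ and exploit that $\hat P^m(t)$ has rank one, so $|x|^2_{\hat P^m(t)}=\bigl|\xi^m a+\sum_{j=1}^{m-1}\xi^j(t)b_j\bigr|^2$ is a scalar modulus; a single application of the weighted Young inequality splits off the $v^m$ contribution, and Cauchy--Schwarz handles the grouped remainder. This avoids the paper's bookkeeping over all pairs and in fact yields the sharper intermediate constant $(m-1)\bigl(\tfrac1\theta-1\bigr)$, which you correctly weaken to the stated $\tfrac{(m-1)^2}{\theta}-1$ for $m\ge2$ (the difference is $(m-2)\bigl(\tfrac{m-1}{\theta}+1\bigr)\ge0$), with $m=1$ trivial as you note. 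Your remark that linear independence of the $v^j$ is not needed for this inequality is also accurate; the paper invokes it only later to ensure positive definiteness of the assembled matrix on the relevant span.
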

Notice that the second ($t$-dependent) term of the r.h.s.\ involves only the semi-norms $|x|_{Q^1},\ldots,|x|_{Q^{m-1}}$.

The technical proof is deferred to Appendix \ref{pr:PtP0est}.

\begin{remark}\label{rem:PtP0est}
Lemma \ref{lem:PtP0est} is formulated in a general form, to be applicable also to \S \ref{sec:bgk} below. Here, we use it to estimate the time-dependency of the $P_n^m(t)$-semi-norms (as defined in \eqref{eq:defPmn}) for arbitrary $n\in I_\mu$ and corresponding $m\in\{2,\ldots,l_n\}$. In notation of Lemma \ref{lem:PtP0est}, choose $v^1=v_n^{(0)},\ldots,v^m=v_n^{(m-1)}$ and $\xi^k(t) = \frac{t^{m-k}}{(m-k)!}$ for $k\in\{1,\ldots,m\}$, which leads to $\hat{w}_n^m(t) = w_n^m(t)$. Then, \mbox{Lemma \ref{lem:PtP0est}}  yields
	\begin{align}\label{eq:PtP0est}
	\begin{aligned}	&\qquad |x|_{P_n^m(t)}^2\geq \\
		&\qquad \quad \left(1-\theta \right)|x|^2_{P_n^m(0)} - \left(\frac{(m-1)^2}{\theta}-1\right) \sum^{m-1}_{k=1} \left(\frac{t^{m-k}}{(m-k)!}\right)^2 |x|^2_{P_n^k(0)},
		\end{aligned}
		\end{align}
for every $n\in I_\mu$, all corresponding $m\in\{2,\ldots, l_n\}$, $x\in\C^d$, $\theta\in(0,1)$ and $t\geq 0$.
\end{remark}

In the next step, we combine the exponential decay in $P(t)$-norm of solutions, \eqref{eq:decayofPt} with the lower bound \eqref{eq:PtP0est}. This allows to estimate the $P(0)$-norm decay of solutions and, consequently, the decay behavior in the Euclidean norm. In contrast to the result stated in \cite{Monmarche2015} for the FP setting, we obtain an multiplicative constant $\mathscr{C}$ in the estimate depending explicitly on the maximal defect associated to $\mu$ and the choice of the weights $\beta_n^m$ of $P(t)$.

The freedom of choice in the weights and their influence on the constant $\mathscr{C}$ is of great importance in \S \ref{sec:hc}--\S \ref{sec:fpe}, as $\mathscr{C}$ will need to stay bounded in the \emph{non-defective limit} (see Example \ref{ex:Def1toDef0} below).
\begin{theorem}\label{th:EuclidDecay}
Let $C\in\C^{d\times d}$ be positive stable and let $\mu>0$ be the smallest real part of all eigenvalues. Let $M$ be the maximal size of a Jordan block associated to $\mu$ (i.e.\ the maximal defect associated to $\mu$ is $M-1$). Then there exists a constant $\mathscr{C}>0$, such that the solutions to \eqref{eq:ODE} satisfy
\begin{align}\label{eq:EuclidEst}
|x(t)|^2_2 \leq \mathscr{C}(1+t^{2(M-1)}) e^{-2\mu t} |x(0)|_2^2.
\end{align}
The constant $\mathscr{C}$ can be chosen as
\begin{align}\label{eq:constantM}
\mathscr{C}:= \begin{cases}
(\lambda_{\min}^{P})^{-1} \lambda_{\max}^{P},& M=1,\\
2(\lambda_{\min}^{P(0)})^{-1} \lambda_{\max}^{P(0)}c_M\,
\displaystyle\max_{n\in I_\mu}
\Big[\sum_{m=1}^{l_n}\frac{\beta_n^m}{\displaystyle\min_{k\in\{1,\ldots,m\}}\beta_n^k} \Big],& M\geq 2,
\end{cases}
\end{align}
where $\lambda_{\min}^{P(0)}$ is the smallest and $\lambda_{\max}^{P(0)}$ is the largest eigenvalue of the matrix $P(0)$ (with $P(0)\equiv P$ for $M=1$), which is defined in \eqref{eq:defP}. The constants $c_M$ for $M\geq 2$ are given as:
\begin{align*}
c_{M}&= 2^{M-2} \left(\prod_{j=1}^{M-1} 4j^2-1\right) \left(\prod_{j=2}^{M} \sum_{k=1}^j \frac{1}{[(j-k)!]^2}\right).
\end{align*}
\end{theorem}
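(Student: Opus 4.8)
The plan is to handle the non-defective ($M=1$) and defective ($M\ge2$) cases separately, in each case sandwiching the Euclidean norm between the modified norm of Lemma~\ref{lem:Pexists} and a polynomial-in-$t$ correction coming from Remark~\ref{rem:PtP0est}. For $M=1$ one has $I_\mu=\emptyset$, so Lemma~\ref{lem:Pexists}(1) provides a time-independent Hermitian positive definite $P$ satisfying \eqref{eq:Pconstantdecay}; combining this with \eqref{eq:normequiv} gives at once
\begin{align*}
|x(t)|_2^2\le(\lambda_{\min}^{P})^{-1}|x(t)|_P^2\le(\lambda_{\min}^{P})^{-1}e^{-2\mu t}|x(0)|_P^2\le(\lambda_{\min}^{P})^{-1}\lambda_{\max}^{P}\,e^{-2\mu t}|x(0)|_2^2 ,
\end{align*}
which is \eqref{eq:EuclidEst} (the factor $(1+t^{0})=2$ being harmless slack).

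For $M\ge2$ one has $I_\mu\ne\emptyset$, and Lemma~\ref{lem:Pexists}(2) yields a time-dependent $P(t)$ with \eqref{eq:decayofPt}; applying \eqref{eq:normequiv} to $P(0)$ there gives the \emph{upper} bound $|x(t)|_{P(t)}^2\le\lambda_{\max}^{P(0)}e^{-2\mu t}|x(0)|_2^2$. The heart of the argument is a matching \emph{lower} bound, valid for all $x\in\C^d$ and $t\ge0$,
\begin{align*}
|x|_{P(0)}^2\le 2c_M\Big(\max_{n\in I_\mu}\sum_{m=1}^{l_n}\tfrac{\beta_n^m}{\min_{k\le m}\beta_n^k}\Big)(1+t^{2(M-1)})\,|x|_{P(t)}^2 ,
\end{align*}
which one then evaluates at $x=x(t)$, combines with $\lambda_{\min}^{P(0)}|x(t)|_2^2\le|x(t)|_{P(0)}^2$ and the upper bound, to obtain \eqref{eq:EuclidEst} with $\mathscr{C}$ exactly as in \eqref{eq:constantM}.

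To prove the lower bound I would expand both norms along the decomposition \eqref{eq:defP}. The contributions of the blocks $n\notin I_\mu$ and of the time-independent $m=1$ terms $P_n^1(t)=\vv{n}{0}\otimes\vv{n}{0}$ for $n\in I_\mu$ coincide for $P(0)$ and $P(t)$, and are dominated by $|x|_{P(t)}^2$ since $P(t)$ is a sum of positive semidefinite matrices. For $n\in I_\mu$ and $m\in\{2,\ldots,l_n\}$, rearranging \eqref{eq:PtP0est} (with $\theta_m\in(0,1)$, allowed by Remark~\ref{rem:PtP0est}) reads
\begin{align*}
|x|_{P_n^m(0)}^2\le\tfrac{1}{1-\theta_m}|x|_{P_n^m(t)}^2+\Big(\tfrac{(m-1)^2}{\theta_m}-1\Big)\tfrac{1}{1-\theta_m}\sum_{k=1}^{m-1}\tfrac{t^{2(m-k)}}{[(m-k)!]^2}\,|x|_{P_n^k(0)}^2 .
\end{align*}
Resolving this triangular recursion in $m$ (base case $|x|_{P_n^1(0)}^2=|x|_{P_n^1(t)}^2$) yields $|x|_{P_n^m(0)}^2\le\sum_{k=1}^{m}g_{m,k}(t)\,|x|_{P_n^k(t)}^2$ with polynomials $g_{m,k}$ of degree $2(m-k)\le2(M-1)$ and non-negative coefficients, so that $g_{m,k}(t)\le g_{m,k}(1)(1+t^{2(M-1)})$. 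Multiplying by $\beta_n^m$, summing over $m$, re-summing in $k$, and using $\beta_n^k|x|_{P_n^k(t)}^2\le|x|_{P(t)}^2$ together with $\beta_n^k\ge\min_{j\le m}\beta_n^j$ for $k\le m$ leaves precisely the weight ratios $\beta_n^m/\min_{k\le m}\beta_n^k$. Choosing $\theta_m=\tfrac{m-1}{2m-1}$ turns the recursion coefficient $\big(\tfrac{(m-1)^2}{\theta_m}-1\big)\tfrac{1}{1-\theta_m}$ into $4(m-1)^2-1$, producing the product $\prod_{j=1}^{M-1}(4j^2-1)$ in $c_M$; the remaining factors of $c_M$ (the sums $\sum_k[(m-k)!]^{-2}$ and the $2^{M-2}$) absorb the geometric-type sums in the recursion and the passages of the form $1+(\cdots)\le2(\cdots)$.

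The main obstacle is exactly this bookkeeping: tracking the polynomial degrees and the positivity of the coefficients of $g_{m,k}$, and carrying the explicit constants through the nested summations over $m$ and $k$ so that $c_M$ comes out as stated and only the ratios $\beta_n^m/\min_{k\le m}\beta_n^k$ survive — the feature that keeps $\mathscr{C}$ bounded in the non-defective limit. Everything else (the two applications of \eqref{eq:normequiv} and the final chaining of the upper and lower bounds) is routine.
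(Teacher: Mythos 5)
Your proposal is correct and takes essentially the same route as the paper: the same decomposition \eqref{eq:defP}, the same key inequality \eqref{eq:PtP0est} rearranged exactly as in the paper's \eqref{eq:Pm0estimate1}, the same resolution of the triangular recursion in $m$ (your choice $\theta_m=\tfrac{m-1}{2m-1}$ reproduces the paper's bound $4j^2-1$ and hence the stated $c_M$), and the same final sandwiching via \eqref{eq:normequiv}. The only organizational difference is that you resolve the recursion as a pointwise norm comparison $P(0)\le C(t)\,P(t)$ for arbitrary vectors and insert the decay \eqref{eq:decayofPt} once at the end, whereas the paper plugs the solution decay \eqref{eq:case3} into each inductive step; both yield the constant \eqref{eq:constantM}.
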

The technical proof of this result is deferred to Appendix \ref{pr:EuclidDecay}.
\begin{remark}
If, for all $n\in I_\mu$, the weights $\beta_n^m$ of the matrix $P(0)$ are monotonically decreasing in $m$, i.e.\ $\beta_n^m \geq \beta_n^{m+1}$ for $m\in\{1,\ldots,l_n-1\}$, then
\begin{align*}
\displaystyle\max_{n\in I_\mu}
\Big[\sum_{m=1}^{l_n}\frac{\beta_n^m}{\displaystyle\min_{k\in\{1,\ldots,m\}}\beta_n^k} \Big]=M.
\end{align*}
\end{remark}
\begin{remark}
Note that we could calculate the solution to the ODE system \eqref{eq:ODE} directly, by means of its Jordan transformation,  and get qualitatively the same decay behavior as in \eqref{eq:EuclidEst}, but possibly with a different multiplicative constant: Using $x(t) = e^{-Ct}x(0)$, $C = (V^H)^{-1}J^H V^H$, we obtain
\begin{align}
|x(t)|_2^2 &\leq |V|^2_2|V^{-1}|^2_2 |e^{-J^Ht}|^2_2|x(0)|^2_2 \nonumber\\
&\leq|V|^2_2|V^{-1}|^2_2 \hat{c}_M (1+t^{2(M-1)}) e^{-2\mu t}|x(0)|^2_2,\label{eq:Jconstant}
\end{align}
where $|V|_2$ denotes the matrix norm induced by the vector norm $|\cdot|_2$, $V$ is the transformation matrix from \eqref{eq:V}, $J:=\diag(J_1,\ldots,J_N)$ the corresponding Jordan matrix, and $\hat{c}_M$  depends only on the largest Jordan block.
So what is the gain of the result in Theorem \ref{th:EuclidDecay}?

Firstly, the construction of the matrix $P(t)$ and the method of estimating the $P(t)$-norm decay of the solution can be translated almost directly to the infinite dimensional setting of the Fokker--Planck equation with linear drift, where a direct way of calculating the decay (as in the finite dimensional ODE case) is not possible (see \cite{Arnold2014} for the exponential decay in the non-defective case and \cite{Monmarche2015} for an improved decay in the defective case). In the Fokker--Planck setting on $\R^d$, the place of the $P(t)$-norm is taken by the modified Fisher information involving $P(t)$.

Secondly, the result also makes it possible to systematically calculate the multiplicative constant $\mathscr{C}$ from \eqref{eq:EuclidEst}, which we will use in \S \ref{sec:hc}--\S \ref{sec:fpe}, and which can be further exploited to get decay results for infinite dimensional ODE systems (see \cite{Achleitner2016}, \S 4.3).
\end{remark}

\subsection{Uniform decay estimates in non-defective limits}
The advantage of the $P(t)$-norm estimation compared to \eqref{eq:Jconstant} can be seen in the following example.

\begin{example}\label{ex:Def1toDef0}
Consider the matrix
\begin{align*}
C_\epsilon :=
\begin{pmatrix}
1~& \epsilon\\
0~& 1
\end{pmatrix}
\end{align*}
with arbitrary $\epsilon\neq 0$. Its corresponding Jordan transformation matrix reads
\begin{align*}
V_\epsilon:=
\begin{pmatrix}
1~& 0 \\
0~& \frac{1}{\epsilon}
\end{pmatrix},
\end{align*}
and $M=2$.
For $\epsilon\to 0$ the factor $|V_\epsilon|_2 |V_\epsilon^{-1}|_2$ in \eqref{eq:Jconstant} becomes unbounded of order $\epsilon^{-2}$ (even though the true decay of the solution improves to $e^{-t}|x(0)|_2$ in the limit). This is due to the discontinuity of the Jordan transformation at the transition from defectiveness to non-defectiveness. We apply Theorem \ref{th:EuclidDecay} to the ODE system $\dot{x}=-C_\epsilon x$, with the following eigenvectors of $C_\epsilon^H$:
\begin{align*}
v^{(0)}_1 =
\begin{pmatrix}
0,& 1
\end{pmatrix}^T\quad \text{ and }\quad
v^{(1)}_1 =
\begin{pmatrix}
\frac{1}{\epsilon},& 0
\end{pmatrix}^T.
\end{align*}
When using $\beta_{1,\epsilon}^1 = 1$ and $\beta_{1,\epsilon}^2 = \epsilon^2$ in \eqref{eq:defP}, we get $P_\epsilon(0)=I$, and hence
the constant
\begin{align}\label{eq:constantDef1}
\mathscr{C}_\epsilon=12\cdot \max\{2,1+\epsilon^2\}
\end{align}
stays bounded in the \emph{non-defective limit} $\epsilon\to 0$.\hfill$\Diamond$
\end{example}

This example shows that, while the method presented here is still relying on the Jordan transformation (as $P(t)$ is constructed with generalized eigenvectors), the additional weights $\beta_n^m$ in the matrix $P(t)$ allow for estimates more closely related to the actual behavior of the solutions. As sketched in Example \ref{ex:Def1toDef0} this can allow for (but does not guarantee) an estimate that is uniform in the \emph{non-defective limit}. In \S \ref{sec:hc}--\S \ref{sec:fpe} we will see further examples of specific choices of the weights $\beta_n^m$ that lead to uniform estimates in the non-defective limit.

\begin{remark}\label{rem:decayest}
While the proof of Theorem \ref{th:EuclidDecay} is formulated to work for arbitrarily large defects, more careful estimations can improve the decay estimate. We shall now show an improvement for defect one. For any $n\in I_\mu$ the inequality \eqref{eq:Pm0estimate1} with $m=2$ and $\theta=\frac12$ yields
\begin{align*}
|x(t)|_{P_n^2(0)}^2 \leq 2 e^{-2\mu t}\left( t^2 |x(0)|^2_{P_n^1(0)} +  |x(0)|_{P_n^2(0)}^2\right),
\end{align*}
where we used \eqref{eq:case3} with $P_n^1(t)=P_n^1(0)$.

For $n\in I_\mu$ with $l_n=2$ the decay estimate for $P_n(0)=\beta^1_n P^1_n(0)+\beta^2_n P^2_n(0)$ follows as
\begin{align*}
|x(t)|^2_{P_n(0)} \leq 2 e^{-2\mu t} (1 + \frac{\beta_n^2}{\beta^1_n}t^2) |x(0)|^2_{P_n(0)}.
\end{align*}
We can use this estimate to get an improved upper bound for solutions from Example \ref{ex:Def1toDef0} in the Euclidean norm (compared to \eqref{eq:EuclidEst} with \eqref{eq:constantDef1}): With the same matrix choice $P_\epsilon(0)=P^1_1(0) + \epsilon^2 P^2_1(0)=I$ as in Example \ref{ex:Def1toDef0}, it follows that solutions to $\dot{x}=-C_\epsilon x$ satisfy
\begin{align}\label{eq:epsPrecise}
|x(t)|_2^2&=|x(t)|_{P_\epsilon(0)}^2 \leq 2 e^{-2t} (1 + \epsilon^2 t^2) |x(0)|^2_2,\quad t\geq 0.
\end{align}
This estimate not only yields a bounded multiplicative constant for $\epsilon\to 0$, but also yields a sharp decay rate --- namely purely exponential --- for the non-defective limit case $\epsilon=0$. In comparison, the solution propagator norm estimate for all $\epsilon\in\R$ is given as
\begin{align*}
|e^{-C_\epsilon t}x(0)|_2^2 &= e^{-2 t}\left|
\begin{pmatrix}
1& -\epsilon t\\
0&~1
\end{pmatrix}
\right|^2_2|x(0)|^2_2\\
&=e^{-2t} \left(1 + \frac{\epsilon^2 t^2}{2} + \sqrt{\epsilon^2 t^2 + \frac{\epsilon^4t^4}{4}}\right)|x(0)|^2_2 \numberthis \label{eq:evpropest}\\
&\overset{\epsilon t \to \infty}{\approx} e^{-2t}(2+\epsilon^2 t^2)|x(0)|^2_2.
\end{align*}
This shows that \eqref{eq:epsPrecise} is rather accurate.
\end{remark}

\begin{remark}\label{rem:Ptime}
Consider an ODE \eqref{eq:ODE} with matrix $C$ that has a Jordan block $J_{n_2}$ in Case 2 from \S \ref{subsec:pmatrix}, i.e. $l_{n_2}>1$ and $\re(\lambda_{n_2})>\mu$.
Then, the construction of Case 2 can be replaced with the one of Case 3. This means, exchanging the time-constant matrix $P_{n_2}$, which has predetermined weights $b_{n_2}^m$, by a time-dependent matrix $\wt{P}_{n_2}(t)$, which allows for arbitrary weights $\beta_{n_2}^m>0$. For simplicity let us assume $M=1$. With the appropriate (straight forward) modifications of Lemma \ref{lem:Pexists} and Theorem \ref{th:EuclidDecay} (treating $J_{n_2}$ as Case 3), this yields the decay estimate \eqref{eq:EuclidEst} with the modified constant
\begin{align}
\wt{\mathscr{C}}: = 2(\lambda_{\min}^{\wt{P}(0)})^{-1} \lambda_{\max}^{\wt{P}(0)}c_{l_{n_2}}\,
\Big[\sum_{m=1}^{l_{n_2}}\frac{\beta_{n_2}^m}{\displaystyle\min_{k\in\{1,\ldots,m\}}\beta_{n_2}^k} \Big],
\end{align}
where $\wt{P}(t)$ is the matrix defined by \eqref{eq:defP}, but with $\wt{P}_{n_2}(t)$ instead of $P_{n_2}$. 

For ODE families $\dot{x}_\epsilon=-C_\epsilon x_\epsilon$ and their non-defective limits $\epsilon\to 0$, this modification can be beneficial: The additional weights of $\wt{P}_{n_2}(t)$ provide further possibilities to obtain a multiplicative constant $\wt{\mathscr{C}}_\epsilon$ that is bounded for $\epsilon\to 0$. In \S \ref{subsec:FPEdecay}  (Case $k=3$), we will see an example of an ODE family where Theorem \ref{th:EuclidDecay} yields an unbounded constant $\mathscr{C}_\epsilon$ (for all possible weights) but a bounded constant $\wt{\mathscr{C}}_{\epsilon}$ for $\epsilon\to 0$ (with the correct choice of weights).
\end{remark}

\subsection{Uniform decay for a family of ODEs}
We shall consider now an extension of Example \ref{ex:Def1toDef0} which will be relevant for the PDEs discussed in \S \ref{sec:hc}--\S \ref{sec:fpe}: We consider the matrix family with \mbox{parameter $z\in\R$}
\begin{align}\label{eq:ODEfam}
C(z): =
\begin{pmatrix}
\mu(z)& \mu'(z)\\
0& \mu(z)
\end{pmatrix}
=
\mu(z)
\begin{pmatrix}
1& ~~\frac{\mu'(z)}{\mu(z)}\\
0& ~~1
\end{pmatrix}
\end{align}
with a given function $\mu\in C^1(\R)$ and $\mu(z) \geq \mu_{\min} = \mu(z_0) >0$. For simplicity let $z_0\in\R\cup \{\infty,-\infty\}$ be the unique global point of minimum (infimum if $|z_0|=\infty$) of $\mu$.

We are now interested in a uniform-in-$z$ estimate on the matrix propagator $e^{-C(z) t}$ with $t\geq 0$, based on the estimate \eqref{eq:epsPrecise}. To this end we have to consider the interplay of two effects: On the one hand the parameter value $z=z_0$ yields the smallest exponential decay rate $\mumin$ but it is without defect, since $\mu'(z_0)=0$ makes $C(z_0)$ a diagonal matrix.
On the other hand the parameters $z\neq z_0$ yield a larger decay, but with a defect (as long as $\mu'(z)\neq 0$). Hence we shall be interested in the question, whether or not the typical defective decay of the form $\mathcal{O}((1+t^2) e^{-2\mu_{\min} t})$ persists for the uniform estimate of $|e^{-C(z) t}|^2$ for $t\to+\infty$. In the subsequent examples we shall illustrate that both scenarios are in fact possible.
\begin{example}\label{ex:unidefect}
Let $\mu(z) := \mumin + \alpha z^2$ with some $\alpha>0$, and hence $z_0=0$. From Example \ref{ex:Def1toDef0} with \eqref{eq:epsPrecise} (using $\epsilon = \frac{\mu'(z)}{\mu(z)}$, $t\mapsto t \mu(z)$) we obtain the uniform decay estimate
\begin{align}\label{eq:exQuad}
|e^{-C(z) t} |^2_2 \leq 2 e^{-2 \mumin t} \sup_{z\in\R} f_1(z,t),\quad  z\in\R,t\geq 0,
\end{align}
with $$f_1(z,t):= (1 + 4\alpha^2 z^2 t^2)e^{-2 \alpha z^2 t}.$$
An elementary computation yields
\begin{align*}
 \sup_{z\in\R} f_1(z,t)  =
 \begin{cases}
1,& \alpha t \leq \frac12,\\
2\alpha t e^{-\frac{2\alpha t-1}{2\alpha t}},& \alpha t > \frac12,
\end{cases}
\end{align*}
with the asymptotic behavior $\sup_{z\in\R} f_1(z,t) = \mathcal{O}(\frac{2\alpha}{e}t)$ as $t\to +\infty$. Hence, estimate \eqref{eq:exQuad} exhibits the typical defective decay behavior, and the term $t e^{-2\mumin t}$ cannot be dropped in the estimate.\hfill$\Diamond$
\end{example}
\begin{example}\label{ex:noalg}
Let $\mu(z) := \mu_0 + \alpha e^{\beta z}$ with some $\alpha>0$ and $\beta\in\R\setminus\{0\}$ with $|\beta|<2$. Here, $z_0=-\infty\operatorname{sgn}(\beta)$. Then \eqref{eq:epsPrecise} yields the uniform decay estimate
\begin{align}\label{eq:exExp}
|e^{-C(z) t} |^2_2 \leq 2 e^{-2 \mu_0 t} \sup_{z\in\R} f_2(z,t),\quad  z\in\R,t\geq 0,
\end{align}
with
\begin{align*}
f_2(z,t) : = (1+\alpha^2 \beta^2 e^{2 \beta z} t^2) e^{-2\alpha e^{\beta z} t}.
\end{align*}
Since $\partial_t f_2(z,0) = -2 \alpha e^{\beta z}<0$ and $\partial_t f_2(z,t) \neq 0$ for every $z\in\R$, $t\geq 0$, we conclude
\begin{align*}
|e^{-C(z) t} |^2_2 \leq 2 e^{-2\mu_0 t} \sup_{z\in\R} f_2(z,0) = 2 e^{-2 \mu_0 t},\quad z\in\R, t\geq 0.
\end{align*}
Hence, this example shows a purely exponential decay behavior, which is rather typical for the non-defective case.\hfill$\Diamond$
\end{example}
We remark that we could not find an example of a parameter function $\mu\in C^1(\R)$ with a minimum at $|z_0|<\infty$ for which the algebraic factor vanishes in the uniform estimate.

In the following sections \S \ref{sec:hc}--\S\ref{sec:fpe} we investigate parabolic and kinetic evolution equations in which equation coefficients depend on an uncertainty variable $z\in\R$. After a Fourier decomposition, the sensitivity analysis leads to families of defective ODE systems of type similar to \eqref{eq:ODEfam}, for which we are interested in uniform-in-$z$ decay estimates of solutions with sharp rate. Sharpness is understood here in the sense that in the class of $C^1(\R)$ parameter functions $\mu(z)$ satisfying $\mu_0:=\inf_{z\in\R}\mu(z)>0$, the decay estimate is of type $\mathcal{O}(1+t^m)e^{-2\mu_0 t}$ for large $t$, with minimal $m\in\N_0$. As Example \ref{ex:unidefect} illustrates, $m>0$ is necessary to cover arbitrary $C^1(\R)$ parameter functions.

With the three examples of \S 3--\S 5 we shall illustrate the various challenges of this procedure to obtain estimates: \emph{uniformity in the Fourier modes and in the non-defective limit(s).}

\section{Linear convection-diffusion equations with uncertain coefficients}\label{sec:hc}
First we consider the parabolic equation on the 1D torus
\begin{align}\label{eq:heat}
\partial_t u(x,z,t)&= -a(z)\partial_x u(x,z,t)+b(z)\partial^2_{x}u(x,z,t),\quad x\in\T^1, t\geq 0,\\
u(x,z,0)&=u^0(x,z),
\end{align}
for $u(x,z,t)\in\R$ with the space variable $x$, convection coefficient $a(z)\in\R$ and diffusion coefficient $b(z)$ satisfying $b_0:=\inf_{z\in\R} b(z)>0$. We are interested in the sensitivity of solutions with respect to the uncertainty parameter $z\in\R$ contained in the coefficients. We assume that the coefficients satisfy $a,b\in C^1(\R)$. For each $z\in\R$, the equation is  mass conserving (in time), i.e.\ $\frac{1}{2\pi}\int^{2\pi}_0 u(x,z,t)dx = const.$, and the unique normalized steady state is given as $u^\infty(x,z)=1$. Correspondingly we shall also assume that the initial condition is normalized as
$\frac{1}{2\pi}\int_0^{2\pi} u^0(x,z) dx = 1$ for all $z\in\R$.

A Fourier expansion of $u$ with respect to $x\in\T^1$, allows to rewrite the PDE (for each fixed $z$) as a family of ODEs. With the notation $u(x,z) = \sum_{k\in\Z} u_k(z)e^{ikx}$, the equation for each Fourier mode $u_k$, $k\in\Z$ reads
\begin{align}\label{eq:heatFourier}
\partial_t u_k(z) = -ik a(z) u_k(z)- k^2 b(z) u_k(z),
\end{align}
with the explicit solutions $u_k(z,t) = e^{-k^2 b(z) t - ik a(z)t}u_k(z,0)$. Due to the above normalization we have for the $k=0$ mode: $u_0(z,t)=u^\infty_0(z)=1$.
\subsection{First order parameter sensitivity analysis}\label{subsec:hcSens1}
Now we analyze the (linear order) sensitivity of the equation with respect to the uncertainty in the coefficients $a(z)$ and $b(z)$. Therefore we consider the evolution equation for $v(x,z,t):=\partial_z u(x,z,t)$, given as
\begin{align}\label{eq:sensheat}
\partial_t v(z) =  -(\partial_z a(z)) \partial_x u(z)+ (\partial_z b(z))\partial_{x}^2 u(z)   - a(z) \partial_x v(z)+ b(z) \partial_x^2 v(z) .
\end{align}
The Fourier modes $v_k(z,t):=\partial_z u_k(z,t)$ for $k\in\Z$ satisfy
\begin{align}\label{eq:sensheatFourier}
\partial_t v_k(z) =  -ik(\partial_z a(z)) u_k(z)- k^2 (\partial_z b(z))u_k(z)   - ik a(z) v_k(z)- k^2 b(z) v_k .
\end{align}
For $k\in\Z\setminus\{0\}$ the system of \eqref{eq:heatFourier} and \eqref{eq:sensheatFourier} reads
\begin{align}\label{eq:firstorderHeat}
\partial_t
\underbrace{\begin{pmatrix}
u_k
\\v_k
\end{pmatrix}}_{\udef{y_k(z,t):=}}
= -k^2\underbrace{\begin{pmatrix}
 b(z) + \frac{ia(z)}{k}& 0\\
\partial_z b(z) + \frac{i \partial_z a(z)}{k}~~&  b(z) + \frac{ia(z)}{k}
\end{pmatrix}}_{\udef{C_k(z):=}}
\begin{pmatrix}
u_k
\\v_k
\end{pmatrix}.
\end{align}
Our goal is to obtain a decay estimate with sharp decay rate for solutions to \eqref{eq:firstorderHeat}, uniform in $z\in\R$ by applying Theorem \ref{th:EuclidDecay}.

Due to the normalization of the initial conditions $u^0$, we have $v_0(z,t) = \frac{1}{2\pi} \int^{2\pi}_0 v(x,z,t) dx \equiv 0$, and in particular for the initial condition $\int_0^{2\pi}v^0(x,z)dx = 0$. Hence, its steady state is $v_0^\infty(z)=0$; the (expected) decay of all higher modes $v_k, k\in\Z\setminus\{0\}$ implies $v^\infty(x,z)\equiv 0$. With the notation $y:=(u,v)^T$, the unique (normalized) steady state of the system \eqref{eq:heat}, \eqref{eq:sensheat} is $y^\infty(x,z) \equiv (1, 0)^T$.

For each Fourier mode $k\in\Z\setminus\{0\}$, the double eigenvalue of the matrix $C_k(z)$ is given as
\begin{align*}
\lambda_k(z):= b(z) + \frac{i a(z)}{k}.
\end{align*}
Hence, the matrix is positive stable and the steady state is given as $y_k^\infty = 0\in\C^2$. The spectral gap of the evolution operator in \eqref{eq:firstorderHeat} is given by $\mu_k(z):=k^2 b(z)>0$ and the eigenvalue $\lambda_k(z)$ of $C_k(z)$ is defective, if and only if $\partial_z \lambda_k(z) \neq 0$.

If we consider all Fourier modes, the spectral gap for the whole sequence $\{y_k(z)\}_{k\in\Z\setminus\{0\}}$ is given as
\begin{align*}
\mu(z):=\min_{k\in\Z\setminus\{0\}} \mu_k(z) =\mu_{\pm 1}(z)= b(z),
\end{align*}
and it is realized by the modes $k=\pm 1$. The steady state is given by the sequence $\{y_k^\infty\}_{k\in\Z} = \{(\delta_{0k},0)^T\}_{k\in\Z}$, with $\delta_{0k}$ denoting the Kronecker delta.

In what follows, we investigate the decay rate of solutions to the system of equations \eqref{eq:heat} and \eqref{eq:sensheat} towards the steady state with a sharp rate, uniform in the uncertainty variable $z$.

The solution vector $y(\cdot,z,t)\in L^2(0,2\pi)$ is equivalent to $\{y_k(z,t)\}_{k\in\Z}$ by Parseval's identity. As the ODE system for each Fourier mode $k\in\Z\setminus\{0\}$ can be defective for certain values in $z\in\R$, we expect a uniform-in-$z$ decay rate that is not purely exponential. In fact, for non-constant $a$ and $b$, defectiveness is the more typical behavior of the matrix $C_k(z)$.

For each fixed $k\in\Z\setminus\{0\}$, we proceed with a case distinction between the defective and non-defective case.

\subsubsection{Case 1; $z\in\R$, such that $\partial_z\lambda_k(z)=0$:}
The matrix $C_k(z)$ is diagonal and the solutions are given as $y_k(z,t)=e^{-k^2\lambda_k(z)t} y_k(z,0)$. Hence, for each $k$ and $z$, the decay of solutions to \eqref{eq:firstorderHeat} is given as
\begin{align}\label{eq:HeatModeDecayCase1}
|y_k(z,t)|^2_2 = e^{-2k^2b(z) t} |y_k(z,0)|^2_2.
\end{align}

\subsubsection{Case 2; $z\in\R$, such that $\partial_z\lambda_k(z)\neq 0$:} In this case the eigenvalue $\lambda_k(z)$ of $C_k(z)$ is defective of order 1, i.e.\ $M=2$, for $k\in\Z\setminus\{0\}$. Hence we shall apply Theorem \ref{th:EuclidDecay} to get a sharp decay estimate for solutions to \eqref{eq:firstorderHeat}.

To denote the dependence on the Fourier mode $k\in\Z$, we shall use the subscript $k$, e.g., we use $P_k(z,0)$ for the matrix $P(z,0)$, defined in \eqref{eq:defP}, that corresponds to $C_k(z)$. Similarly, we denote $P_k(z,0)$'s weights $\beta_n^m$ in \eqref{eq:defP} by $\beta_{n,k}^m$.

For $k\in\Z\setminus\{0\}$, the matrix $C_k(z)$ is positive stable and the eigenvector and generalized eigenvector of $C^H_k(z)$, corresponding to $\overline{\lambda}_k(z)$, are given as
\begin{align*}
&v_{1,k}^{(0)} =
\begin{pmatrix}
1,&0
\end{pmatrix}^T,&
v_{1,k}^{(1)}(z)=
\begin{pmatrix}
0,&\frac{1}{\partial_z \overline{\lambda}_k(z)}
\end{pmatrix}^T.
\end{align*}
In analogy to Example \ref{ex:Def1toDef0} we choose $\beta_{1,k}^1 = 1$ and $\beta_{1,k}^2(z) = |\partial_z\lambda_k(z)|^2$, leading to
\begin{align*}
P_k(z,0) &= v_{1,k}^{(0)} \otimes v_{1,k}^{(0)} + |\partial_z\lambda_k(z)|^2 v_{1,k}^{(1)} \otimes v_{1,k}^{(1)}= I.
\end{align*}
An appropriate choice of $\beta_{1,k}^m$ is essential here, in order to make the matrix $P_k(z,0)$ (and hence the constant $\mathscr{C}_k(z)$ below) uniformly bounded for $\partial_z\lambda_k(z)\to 0$, i.e. for the \textit{non-defective limit}.

Now we can apply Theorem \ref{th:EuclidDecay} (for the rescaled time $\tau_k=k^2t$) to get the following decay estimate for solutions to the system \eqref{eq:firstorderHeat} for each $k\in\Z\setminus\{0\}$ and $z\in\R$:
\begin{align}\label{eq:HeatModeDecayCase2}
|y_k(z,t)|_2^2 \leq \mathscr{C}_k(z) (1+k^4t^2) e^{-2k^2 b(z)t}|y_k(z,0)|^2_2,
\end{align}
with the constant $\mathscr{C}_k(z)\geq 0$ defined in \eqref{eq:constantM}, given as
\begin{align}\label{eq:Case2const}
\mathscr{C}_k(z)&=12 \cdot \left(1+\frac{|\partial_z \lambda_k(z)|^2}{\min\{1,|\partial_z \lambda_k(z)|^2\}}\right)=
12\cdot \max\{2,1+|\partial_z\lambda_k(z)|^2\}.
\end{align}

Combining Case 1 and Case 2, we infer a decay estimate for the first order parameter sensitivity equations by applying Parseval's identity:
\begin{theorem}\label{th:sens1Heat}
Let $a,b\in C^1(\R)$ where $b_0:=\inf_{z\in\R}b(z)>0$ and $\partial_z a, \partial_z b\in L^\infty(\R)$. Then, there exists a constant $\mathscr{C}>0$, such that normalized  solutions $y(x,z,t)=(u(x,z,t),v(x,z,t))^T$ of the system \eqref{eq:heat}, \eqref{eq:sensheat} with steady state $y^\infty := (1,0)^T$ satisfy
\begin{align*}
\sup_{z\in\R}\|y(\cdot,z,t)-y^{\infty}\|^2_{L^2(0,2\pi;\R^2)} \leq \mathscr{C}(1+t^2)e^{-2b_0 t} \sup_{z\in\R}\|y(\cdot,z,0)-y^\infty \|^2_{L^2(0,2\pi;\R^2)}
\end{align*}
for $t\geq 0$.
\end{theorem}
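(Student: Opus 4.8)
The plan is to assemble the pointwise-in-$z$ and pointwise-in-$k$ decay estimates from Case~1 and Case~2 into a single Euclidean/$L^2$ estimate via Parseval, taking care that all constants are uniform in $z$ and that the algebraic factor is controlled uniformly in the Fourier mode $k$. First I would record that, for the normalized solution, $u_0(z,t)\equiv 1$ and $v_0(z,t)\equiv 0$, so $y_0(z,t)=y^\infty$ and the $k=0$ mode contributes nothing to $\|y(\cdot,z,t)-y^\infty\|^2_{L^2}$. By Parseval's identity, $\|y(\cdot,z,t)-y^\infty\|^2_{L^2(0,2\pi;\R^2)} = 2\pi\sum_{k\in\Z\setminus\{0\}} |y_k(z,t)|_2^2$, and likewise for the initial data, so it suffices to bound $\sum_{k\neq 0}|y_k(z,t)|_2^2$ by $\mathscr{C}(1+t^2)e^{-2b_0 t}\sum_{k\neq 0}|y_k(z,0)|_2^2$ uniformly in $z$.

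Next I would combine the two cases into one mode-wise bound. In Case~1 ($\partial_z\lambda_k(z)=0$) we have the equality \eqref{eq:HeatModeDecayCase1}, which is trivially dominated by the right-hand side of \eqref{eq:HeatModeDecayCase2} since $\mathscr{C}_k(z)(1+k^4t^2)\geq 1$. In Case~2 we use \eqref{eq:HeatModeDecayCase2} with the explicit constant \eqref{eq:Case2const}. Since $\partial_z a,\partial_z b\in L^\infty(\R)$, one has $|\partial_z\lambda_k(z)|^2 = |\partial_z b(z)|^2 + |\partial_z a(z)|^2/k^2 \leq \|\partial_z b\|_\infty^2 + \|\partial_z a\|_\infty^2 =: D^2$ for all $k\in\Z\setminus\{0\}$ and all $z$, hence $\mathscr{C}_k(z)\leq 12\max\{2,1+D^2\}=:\mathscr{C}_0$ uniformly in $k$ and $z$. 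Therefore in both cases
\begin{align*}
|y_k(z,t)|_2^2 \leq \mathscr{C}_0\,(1+k^4 t^2)\,e^{-2k^2 b(z) t}\,|y_k(z,0)|_2^2,\qquad k\in\Z\setminus\{0\},\ z\in\R,\ t\geq 0.
\end{align*}

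The remaining point — and the one requiring a little care — is that the mode-dependent exponential $e^{-2k^2 b(z)t}$ and the mode-dependent algebraic factor $(1+k^4t^2)$ must be replaced by the mode-independent $e^{-2b_0 t}$ and $(1+t^2)$ with a uniform constant. Write $e^{-2k^2 b(z)t} \leq e^{-2k^2 b_0 t} = e^{-2b_0 t}\,e^{-2(k^2-1)b_0 t}$ for $k\neq 0$ (using $k^2\geq 1$ and $b(z)\geq b_0$), so
\begin{align*}
(1+k^4t^2)\,e^{-2k^2 b(z)t} \leq e^{-2b_0 t}\,(1+k^4 t^2)\,e^{-2(k^2-1)b_0 t}.
\end{align*}
It then suffices to show $\sup_{t\geq 0}\sup_{k\in\Z\setminus\{0\}} \dfrac{(1+k^4t^2)e^{-2(k^2-1)b_0 t}}{1+t^2} =: K_0 < \infty$; indeed the substitution $s=k^2 t$ gives $(1+k^4t^2)e^{-2(k^2-1)b_0 t}=(1+s^2)e^{-2b_0(1-k^{-2})s}\leq (1+s^2)e^{-2b_0(1-\tfrac{1}{2})s}\cdot\mathbbm{1}_{\{k^2\geq 2\}}$ plus the $k^2=1$ term which equals $1+s^2=1+t^2$; the function $(1+s^2)e^{-b_0 s}$ is bounded on $[0,\infty)$, and for $k^2=1$ the ratio is exactly $1$, so $K_0<\infty$. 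Setting $\mathscr{C}:=\mathscr{C}_0 K_0$ we obtain $|y_k(z,t)|_2^2 \leq \mathscr{C}(1+t^2)e^{-2b_0 t}|y_k(z,0)|_2^2$ for every $k\neq 0$, $z\in\R$, $t\geq 0$; summing over $k\in\Z\setminus\{0\}$, applying Parseval on both sides, and finally taking the supremum over $z\in\R$ yields the claim. I expect the only genuine obstacle to be the bookkeeping in this last uniform-in-$k$ bound; the rest is assembling \eqref{eq:HeatModeDecayCase1}–\eqref{eq:Case2const}, Parseval, and the $L^\infty$ bounds on the derivatives of the coefficients.
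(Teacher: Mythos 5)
Your proposal is correct and follows essentially the same route as the paper's proof: combine the Case 1/Case 2 mode estimates into a single bound with a constant controlled uniformly in $k$ and $z$ via $\|\partial_z a\|_\infty,\|\partial_z b\|_\infty$, absorb the mode-dependent factor through $(1+k^4t^2)e^{-2k^2 b_0 t}\leq c\,(1+t^2)e^{-2b_0 t}$, and conclude with Parseval and the supremum over $z$. Your explicit $s=k^2t$ substitution is just a more detailed justification of the uniform-in-$k$ step that the paper states in one line.
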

\begin{proof}
Combining both estimates \eqref{eq:HeatModeDecayCase1} and \eqref{eq:HeatModeDecayCase2} leads to
\begin{align}
|y_k(z,t)|_2^2 \leq \wt{\mathscr{C}} (1+k^4t^2) e^{-2k^2 b(z)t}|y_k(z,0)|^2_2,\quad k\in\Z\setminus\{0\},
\end{align}
with the constant
\begin{align*}
\wt{\mathscr{C}}= \sup_{k\neq 0,z\in\R}\mathscr{C}_k(z) \leq 12 \max\{2, 1+\|\partial_z a\|_\infty^2 + \|\partial_z b\|^2_\infty\}
\end{align*} independent of $z\in\R$ and $k\in\Z\setminus\{0\}$. With Parseval's identity we obtain
\begin{align*}
\|y(\cdot,z,t) - y^\infty \|_{L^2(0,2\pi;\R^2)}^2 &= \frac{1}{2\pi}\sum_{k\in\Z} |y_k(z,t)-y^\infty_k|_2^2\\
& \leq \frac{1}{2\pi}\sum_{k\in\Z\setminus\{0\}} \wt{\mathscr{C}} (1+k^4t^2)e^{-2k^2 b(z)t}|y_k(z,0)|^2_2 \\
&\leq \mathscr{C} (1+t^2)e^{-2b_0 t} \|y(\cdot,z,0) - y^\infty \|_{L^2(0,2\pi;\R^2)}^2,
\end{align*}
where we used the estimate
\begin{align*}
(1+k^4t^2)e^{-2k^2b_0 t} \leq c (1+t^2)e^{-2 b_0 t},\quad t\geq 0, k\neq 0,
\end{align*}
with $c:=\max_{t\geq 0} (1+t^2)e^{-2 b_0 t}$.
Taking the supremum over $z\in\R$ completes the proof.\qed
\end{proof}
\subsection{Second order parameter sensitivity analysis}\label{subsec:HCsecond}
Next we shall extend the above analysis to second order. This will also illustrate the challenges involved in obtaining uniform decay estimates in defective limits.

We assume $a,b\in C^2(\R)$ and denote $w(x,z,t):=\partial^2_z u(x,z,t)$. By differentiation of \eqref{eq:sensheat} with respect to $z$, the second order sensitivity equation is given as
\begin{align}\label{eq:sens2heat}
\begin{aligned}\partial_t w(z)&= -(\partial_z^2 a(z)) \partial_x u(z) + (\partial^2_z b(z))\partial_x^2 u(z)\\
&\qquad- 2(\partial_z a(z)) \partial_xv(z) +2 (\partial_zb(z))\partial^2_x v(z)\\
&\qquad - a(z)\partial_x w(z) + b(z) \partial^2_x w(z).
\end{aligned}
\end{align}
The system for the Fourier mode $k\in\Z\setminus\{0\}$ of $(u,v,w)^T$, with $w_k(z,t):=\partial_z v_k(z,t)$, is given as
\begin{align}\label{eq:secorderODE}
\partial_t
\underbrace{\begin{pmatrix}
u_k\\
v_k\\
w_k
\end{pmatrix}}_{\udef{y_k(z,t):=}}
=
-k^2
\underbrace{\begin{pmatrix}
\lambda_k(z)~~& 0 & 0\\
\partial_z\lambda_k(z)~~& \lambda_k(z) & 0\\
\partial^2_z\lambda_k(z)~~& 2\partial_z\lambda_k(z)~~ & \lambda_k(z)
\end{pmatrix}}_{\udef{D_k(z):=}}
\begin{pmatrix}
u_k\\
v_k\\
w_k
\end{pmatrix}.
\end{align}
As before, $w_0(z,t)\equiv 0$, and in particular for the initial condition $\int_0^{2\pi}w^0(x,v)dx=0$. Hence the unique (normalized) steady state of the second order sensitivity system is $y^\infty(x,z):=(u^\infty(x,z), v^\infty(x,z), w^\infty (x,z))^T \equiv (1, 0, 0)^T$.

The triple eigenvalue is $\lambda_k(z)$, with $\re(\lambda_k(z))>0$ and its defectiveness for $k\in\Z\setminus\{0\}$ depends on the values of $\partial_z\lambda(z)$ and $\partial_z^2\lambda(z)$, i.e.\
\begin{align*}
\rank( D_k(z)-\lambda_k(z)) =
\begin{cases}
0,& \text{if }\partial_z\lambda_k(z)=\partial_z^2\lambda_k(z) = 0,\\
1,& \text{if }\partial_z\lambda_k(z) = 0\, \& \,\partial^2_z\lambda_k(z) \neq 0,\\
2,& \text{if }\partial_z\lambda_k(z) \neq 0.
\end{cases}
\end{align*}

As in the first order analysis, we need to discuss the decay behavior of these three cases separately:
\subsubsection*{Case 1; $z\in\R$ such that $\partial_z \lambda(z) = \partial_z^2 \lambda(z) = 0$:}
In this case, the eigenvalue $\lambda_k(z)$ is non-defective and the solutions are given as $y_k(z,t) = e^{-k^2 \lambda_k(z)t} y_{k}(z,0)$ from which we obtain the decay
\begin{align} \label{eq:sens2case1}
|y_k(z,t)|^2_2 = e^{-2k^2b(z) t} |y_k(z,0)|^2_2.
\end{align}
\subsubsection*{Case 2; $z\in\R$, such that $\partial_z \lambda(z) =0$ and $ \partial_z^2 \lambda(z) \neq 0$:}
In this case the eigenvalue $\lambda_k(z)$ is defective of order one, i.e. $M=2$. To obtain a sharp decay estimate of solutions, we construct $P_k(z,0)$ according to \eqref{eq:defP}: $N=2$, $l_1=2$, $l_2=1$, $M=2$ and choosing $\beta_{1,k}^1 = 1$, $\beta_{1,k}^2 = |\partial_z^2 \lambda_k(z)|^2$ and $\beta_{2,k}^1=1$. The (generalized) eigenvectors of $D_k^H(z)$ are given as
\begin{align*}
&v_{1,k}^{(0)} =
\begin{pmatrix}
1,&0,& 0
\end{pmatrix}^T,&
v_{1,k}^{(1)}(z)=
\begin{pmatrix}
0,& 0, \frac{1}{\partial^2_z \overline{\lambda}_k(z)}
\end{pmatrix}^T,&
&v_{2,k}^{(0)} =
\begin{pmatrix}
0,&1,& 0
\end{pmatrix}^T.
\end{align*}
This leads to
\begin{align*}
P_k(z,0) &=v^{(0)}_{1,k}\otimes v^{(0)}_{1,k} + |\partial^2_z\lambda_k(z)|^2 v^{(1)}_{1,k}(z)\otimes v^{(1)}_{1,k}(z)+ v^{(0)}_{2,k}\otimes v^{(0)}_{2,k}=I.
\end{align*}
We can now apply Theorem \ref{th:EuclidDecay} (for the rescaled time $\tau_k = k^2 t$) and get the decay estimate
\begin{align}\label{eq:sens2case2}
|y_k(z,t)|^2_2 \leq \mathscr{C}_k(z) (1+k^4t^2) e^{-2 k^2 b(z) t} |y_k(z,0)|^2_2
\end{align}
with the the constant $\mathscr{C}_k(z)$, defined in \eqref{eq:constantM}, given as
\begin{align*}
\mathscr{C}_k(z) = 12\cdot \max\{2,1+|\partial^2_z\lambda_k(z)|^2\}.
\end{align*}
Note that this constant $\mathscr{C}_k(z)$ is uniformly bounded in the \textit{non-defective limit} $\partial_z^2 \lambda_k(z)\to 0$ (from defect 1 to non-defective), but it does not reduce to \eqref{eq:sens2case1}, hence it is not uniformly sharp.
\subsubsection*{Case 3; $z\in\R$, such that $\partial_z \lambda_k(z)\neq 0$:}
The eigenvalue $\lambda_k(z)$ is defective of order two with $N=1$, $l_1=3$ and $M=3$. The (generalized) eigenvectors of $D_k^H(z)$ are given as
\begin{align*}
&v^{(0)}_{1,k}(z)=
\begin{pmatrix}
1,&0,&0
\end{pmatrix}^T, \quad
v^{(1)}_{1,k}(z)=
\begin{pmatrix}
0,& \frac{1}{\partial_z\overline{\lambda}_k(z)},&0
\end{pmatrix}^T,\\
&v^{(2)}_{1,k}(z)=
\begin{pmatrix}
0,&\frac{-\partial^2_z\overline{\lambda}_k(z)}{2(\partial_z\overline{\lambda}_k(z))^3} ,&\frac{1}{2(\partial_z\overline{\lambda}_k(z))^2}
\end{pmatrix}^T.
\end{align*}

For this case the previous strategy of finding weights for $P(t)$ (as defined in \eqref{eq:defP}) that give a uniform in $z$ decay estimate for solutions via Theorem \ref{th:EuclidDecay} does not work directly. All choices of weights $\beta_{1,k}^j(z)$ for $j\in\{1,2,3\}$ lead to constants $\mathscr{C}_k(z)$ (defined in \eqref{eq:constantM}) that are not bounded uniformly in $z$. The problem arises for the defective limit from defect 2 to defect 1, more precisely, for sequences $(z_n)_{n\in\N}\subset\R$ such that $0\neq\partial_z \lambda_k(z_n)\to 0$ in combination with $\frac{\partial^2_z\lambda_k(z_n)}{\partial_z \lambda_k(z_n)}\not\to 0$ as $n\to \infty$. All weight choices of $\beta_{1,k}^j(z_n)$ lead to $(\lambda_{\min}^{P_k(z_n,0)})^{-1} \lambda_{\max}^{P_k(z_n,0)}\to \infty$ due to the three different powers of $\partial_z \overline{\lambda}_k(z_n)$ that appear in the (generalized) eigenvectors of $D_k^H(z_n)$.

However, this problem can be fixed with small adjustments to the proof of \mbox{Theorem \ref{th:EuclidDecay}}, which yield a uniform in $z$ decay estimate.

Define
\begin{align*}
\widetilde{w}^3_k(z,t) &:= w^3_{1,k}(z,t) + \frac{\partial_z^2 \overline{\lambda}_k(z)}{2(\partial_z \overline{\lambda}_k(z))^2}w^2_{1,k}(z,t),
\end{align*}
which is our replacement for $w_k^3(z,t)$, with $w^j_{1,k}(z,t)$ for $j\in\{2,3\}$ from \eqref{eq:wnm}. It satisfies
\begin{align*}
\widetilde{w}^3_k(z,0)=\begin{pmatrix}
0,&0,&\frac{1}{2(\partial_z\overline{\lambda}_k(z))^2}
\end{pmatrix}^T,
\end{align*}	 which eliminates the problematic factor ${2(\partial_z\overline{\lambda}_k(z))^3}$ present in $w^3_k(z,0)$.
The corresponding semi-norm matrix is
\begin{align*}
\widetilde{P}^3_k(z,t) := \wt{w}_k^3(z,t) \otimes \wt{w}_k^3(z,t).
\end{align*}
Similar to definition \eqref{eq:defP}, let
\begin{align}\label{eq:defPtilde}
\wt{P}_k(z,t):= P^1_{1,k}(z,t) +|\partial_z \lambda_k(z)|^2 P^2_{1,k}(z,t) + 4|\partial_z \lambda_k(z)|^4 \wt{P}_{k}^3(z,t),
\end{align}
which is positive definite for all $k\neq 0$, $z\in\R$, $t\geq 0$ and satisfies $\widetilde{P}_k(z,0)=I$.

As definition \eqref{eq:defP} is modified, we cannot directly use Theorem \ref{th:EuclidDecay} to get a decay estimate in the Euclidean norm. The idea of the proof of Theorem \ref{th:EuclidDecay} is to estimate each $P^m_n$-semi-norm decay separately, see \eqref{eq:Pinduction}. Combining them yields a decay estimate in the $P$-norm. We will now follow the idea of the proof of Theorem \ref{th:EuclidDecay} but have to carefully modify each step to work for $\wt{P}_k$.\\

First, one can easily verify that for $m=1,2$ the estimate \eqref{eq:Pinduction} remains true, if we replace $P_k(z,0)$ by $\wt{P}_k(z,0)=I$:
\begin{align}\label{eq:Pk1}
|y_k(z,t)|^2_{P^1_{1,k}(z,0)} \leq e^{-2k^2 b(z)t} |y_k(z,0)|^2_2, \quad t\geq 0, k\neq 0,
\end{align}
and (using $c_2=6$)
\begin{align}\label{eq:Pk2}
|y_k(z,t)|^2_{P^2_{1,k}(z,0)} \leq \frac{6}{\min\{1,|\partial_z \lambda_k(z)|^2\}} (1 + k^4t^2)e^{-2k^2 b(z)t} |y_k(z,0)|^2_2,
\end{align}
for $t\geq 0$, $k\neq 0$.

Next we shall derive a similar estimate for the semi-norm $|\cdot|_{\wt{P}_{k}^3(z,0)}$.

%
%
\begin{lemma}\label{lem:CDsecond}
	Let $y_k(z,t)$ be a solution of the ODE \eqref{eq:secorderODE} and $z\in\R$, such that $\partial_z \lambda_k(z)\neq 0$. Then,
	\begin{align}
	|y_k(z,t) |^2_{\wt{P}_k^3(z,0)} \leq 146.25 \frac{1+|\partial_z^2 \lambda_k(z)|^2}{\min\{1,|\partial_z \lambda_k(z)|^4\}} (1+k^8t^4)e^{-2k^2 b(z)t} |y(z,0)|^2_2 \label{eq:FINALLY}
	\end{align}	
	for $t\geq 0$, $k\in\Z\setminus \{0\}$.
\end{lemma}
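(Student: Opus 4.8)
The plan is to mimic the proof of Theorem~\ref{th:EuclidDecay}, but with the generalized eigenvector chain modified so that the third chain element is $\wt{w}^3_k(z,t)$ rather than $w^3_{1,k}(z,t)$. The first point to establish is the differential identity governing $\wt{w}^3_k(z,t)$. Since $\wt{w}^3_k(z,t)$ is a linear combination of $w^2_{1,k}(z,t)$ and $w^3_{1,k}(z,t)$ with \emph{time-independent} coefficients, differentiating and using \eqref{eq:connectionCw} for $w^2_{1,k}$ and $w^3_{1,k}$ shows that $\frac{d}{dt}\wt{w}^3_k(z,t) = D_k^H(z)\wt{w}^3_k(z,t) - \overline{\lambda_k(z)}\,\wt{w}^3_k(z,t)$ still holds (the offending $w^1_{1,k}=v^{(0)}_{1,k}$ term produced by the correction coefficient is exactly what makes $\wt w^3_k(z,0)$ the clean vector stated above, while the algebraic relation between $D_k^H$ and the chain is preserved because the correction is a multiple of a \emph{lower-order} chain element). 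Consequently the exact decay identity $|y_k(z,t)|^2_{\wt P^3_k(z,t)} = e^{-2k^2 b(z) t}\,|y_k(z,0)|^2_{\wt P^3_k(z,0)}$ holds, by the same computation as the one producing \eqref{eq:case3} (with $\mu$ replaced by $k^2 b(z)$ after the time rescaling $\tau_k=k^2 t$).

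Next I would invoke Lemma~\ref{lem:PtP0est} with $m=3$, applied to $\wt w^3_k(z,t)$ expanded in the basis $v^1=v^{(0)}_{1,k}$, $v^2=v^{(1)}_{1,k}(z)$, $v^3=v^{(2)}_{1,k}(z)$. The coefficients are $\xi^3 = 4|\partial_z\lambda_k(z)|^4$ (the weight chosen for $\wt P^3_k$ in \eqref{eq:defPtilde} is $4|\partial_z\lambda_k(z)|^4$, so writing $\wt w^3_k$ directly in normalized form the leading coefficient is of this order; one must be careful to track the $\frac{1}{2(\partial_z\overline{\lambda_k})^2}$ factor) and $\xi^1(t),\xi^2(t)$ are polynomials in $t$ (of the form $c\, t^{3-j}$ plus the extra constant contribution from the correction term). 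Choosing $\theta=\frac12$, Lemma~\ref{lem:PtP0est} gives a lower bound for $|y_k(z,t)|^2_{\wt P^3_k(t)}$ in terms of $|y_k(z,t)|^2_{\wt P^3_k(0)}$ minus polynomial-in-$t$ multiples of $|y_k(z,t)|^2_{Q^1}$ and $|y_k(z,t)|^2_{Q^2}$. Rearranging, $|y_k(z,t)|^2_{\wt P^3_k(0)}$ is bounded by $2|y_k(z,t)|^2_{\wt P^3_k(t)}$ plus $\mathcal O((m-1)^2/\theta)\cdot t^{2}$- and $t^{4}$-weighted combinations of the semi-norms $|y_k(z,t)|^2_{P^1_{1,k}(0)}$ and $|y_k(z,t)|^2_{P^2_{1,k}(0)}$ (after re-expressing $Q^1,Q^2$ in terms of $P^1_{1,k}(0),P^2_{1,k}(0)$, picking up the factors $|\partial_z\lambda_k(z)|^2$ and the $\frac{|\partial^2_z\lambda_k(z)|^2}{|\partial_z\lambda_k(z)|^4}$-type ratio from $v^{(2)}_{1,k}$, whose appearance is precisely why the $\frac{1+|\partial^2_z\lambda_k(z)|^2}{\min\{1,|\partial_z\lambda_k(z)|^4\}}$ factor shows up in \eqref{eq:FINALLY}). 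Finally I would substitute the exact decay of $|y_k(z,t)|^2_{\wt P^3_k(t)}$ from the first step and the already-established bounds \eqref{eq:Pk1}, \eqref{eq:Pk2} for the $P^1_{1,k}(0)$- and $P^2_{1,k}(0)$-semi-norms, collect all the $e^{-2k^2 b(z)t}$ factors together with the polynomial prefactors (bounding $t^2 e^{-\cdots}$, $t^4 e^{-\cdots}$ etc.\ by a single $(1+k^8 t^4)e^{-2k^2 b(z)t}$ after absorbing rescaling constants), and track the absolute constant, which should total $146.25$.

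The main obstacle I anticipate is \textbf{bookkeeping of the constants and of the $|\partial_z\lambda_k|$-powers}, rather than any conceptual difficulty. One must expand $\wt w^3_k(z,t)$ carefully enough to read off the exact polynomials $\xi^1(t),\xi^2(t)$ — these are not just the $\frac{t^{m-k}}{(m-k)!}$ of Remark~\ref{rem:PtP0est} because the correction term $\frac{\partial^2_z\overline{\lambda_k}}{2(\partial_z\overline{\lambda_k})^2}w^2_{1,k}$ adds lower-order contributions — and then feed them into Lemma~\ref{lem:PtP0est} with the right normalization so that $\xi^3$ matches the weight $4|\partial_z\lambda_k(z)|^4$. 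Getting the interplay between the $v^{(2)}_{1,k}$-normalization (which carries $(\partial_z\overline{\lambda_k})^{-2}$ and $(\partial_z\overline{\lambda_k})^{-3}$) and the weight $4|\partial_z\lambda_k|^4$ exactly right is what produces the clean $\min\{1,|\partial_z\lambda_k(z)|^4\}$ denominator and the $(1+|\partial^2_z\lambda_k(z)|^2)$ numerator; a sign or exponent slip here would break the uniform-in-$z$ boundedness in the defective limit from defect $2$ to defect $1$, which is the whole point of introducing $\wt w^3_k$. The rest — Gronwall-type substitution and collapsing $(1+k^4t^2)^2$-type expressions into $1+k^8 t^4$ — is routine, though the numerical constant $146.25$ will require a careful product of the intermediate factors ($2$ from $\theta=\tfrac12$, the $(m-1)^2/\theta = 8$ from Lemma~\ref{lem:PtP0est}, the $6 = c_2$ from \eqref{eq:Pk2}, and the combinatorial sums $\sum_k 1/[(j-k)!]^2$).
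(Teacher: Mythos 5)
Your overall strategy is the paper's: establish that $\wt{w}^3_k$ satisfies the same chain identity, so that $|y_k(z,t)|^2_{\wt{P}^3_k(z,k^2t)}=e^{-2k^2b(z)t}|y_k(z,0)|^2_{\wt{P}^3_k(z,0)}$ exactly as in \eqref{eq:case3}; then apply Lemma \ref{lem:PtP0est}, rearrange as in \eqref{eq:Pm0estimate1}, and insert \eqref{eq:Pk1}--\eqref{eq:Pk2}. Your first step is correct (both $w^2_{1,k}$ and $w^3_{1,k}$ solve \eqref{eq:connectionCw}, so a time-constant linear combination does too; the parenthetical about an ``offending $w^1_{1,k}$ term'' is beside the point, but the conclusion is the paper's \eqref{eq:tildePtdecay}).

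The concrete gap is in how you feed $\wt{w}^3_k$ into Lemma \ref{lem:PtP0est}. You propose the basis $v^1=v^{(0)}_{1,k}$, $v^2=v^{(1)}_{1,k}$, $v^3=v^{(2)}_{1,k}$ with $\xi^3=4|\partial_z\lambda_k(z)|^4$. Neither choice is right: the coefficient of the top chain element in $\wt{w}^3_k(z,t)$ is $1$, and the weight $4|\partial_z\lambda_k|^4$ belongs only to the later assembly of $\wt{P}_k$ in \eqref{eq:defPtilde}; it plays no role inside Lemma \ref{lem:CDsecond}. More importantly, with $v^3=v^{(2)}_{1,k}$ the leading term produced by Lemma \ref{lem:PtP0est} is $(1-\theta)|x|^2_{P^3_{1,k}(z,0)}$, so after rearranging you would obtain an upper bound on $|y_k(z,t)|^2_{P^3_{1,k}(z,0)}$, not on $|y_k(z,t)|^2_{\wt{P}^3_k(z,0)}$, which is the quantity in \eqref{eq:FINALLY}; converting back would require a comparison of $\wt{P}^3_k(z,0)$ with $P^3_{1,k}(z,0)$ and $P^2_{1,k}(z,0)$ that reintroduces precisely the $(\partial_z\overline{\lambda}_k)^{-3}$ bookkeeping that $\wt{w}^3_k$ was built to eliminate. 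The paper instead expands in $v^1=w^1_{1,k}(z,0)$, $v^2=w^2_{1,k}(z,0)$, $v^3=\wt{w}^3_k(z,0)$ with $\xi^3=1$, $\xi^2_k(z,t)=t$ and $\xi^1_k(z,t)=\frac{t^2}{2}+\frac{\partial^2_z\overline{\lambda}_k(z)}{2(\partial_z\overline{\lambda}_k(z))^2}\,t$, so that the leading semi-norm is directly $|\cdot|_{\wt{P}^3_k(z,0)}$ (see \eqref{eq:xiupper}); the factor $\frac{1}{\min\{1,|\partial_z\lambda_k|^4\}}$ then enters only through $\wt{P}^3_k(z,0)\leq\frac{1}{4|\partial_z\lambda_k|^4}I$ in the initial-datum term and through \eqref{eq:Pk2}, and the $(1+|\partial_z^2\lambda_k|^2)$ through $|\xi^1_k|^2$. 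Your constant accounting is otherwise fine: $\theta=\frac12$ gives $\max\{d_3(\tfrac12),2\}=14\leq 15$, and the paper's $146.25$ is $15\times 9.75$ after collecting the polynomial prefactors, so your route would even yield a slightly smaller constant once the basis is corrected.
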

The technical proof is deferred to Appendix \ref{pr:CDsecond}.

Finally, we can estimate solutions to \eqref{eq:secorderODE} in Euclidean norm with the help of \eqref{eq:defPtilde}:
\begin{align*}
|y_k(z,t)|^2_2 &= |y_k(z,t)|_{\wt{P}_k(z,0)}^2\\
&=|y_k(z,t)|^2_{P^1_{1,k}(z,0)} + |\partial_z \lambda_k(z)|^2 |y_k(z,t)|^2_{P^2_{1,k}(z,0)}\\
&\qquad + 4|\partial_z \lambda_k(z)|^4 |y_k(z,t)|^2_{\wt{P}^3_{k}(z,0)}.
\end{align*}
Using \eqref{eq:Pk1} for $P^1_{1,k}(z,0)$ and \eqref{eq:Pk2} for $P_{1,k}^2(z,0)$ allows to estimate the first two terms. For the third term including $\wt{P}^3_k(z,0)$, we use \eqref{eq:FINALLY} to get
\begin{align*}
|y_k(z,t)|^2_2 &\leq \Big[1+6 \max\{1,|\partial_z \lambda_k(z)|^2\}(1+k^4 t^2)\\
&\hspace{-12mm}+4\max\{1,|\partial_z \lambda_k(z)|^4\}( 1+ |\partial_z^2 \lambda_k(z)|^2)146.25(1+k^8t^4) \Big]e^{-2k^2 b(z) t} |y_k(z,0)|^2_2 \\
\quad&\begin{aligned} \leq\Big[1+\big(12+585 ( 1+ |\partial_z^2 \lambda_k(z)|^2)\big)\max\{1,|\partial_z \lambda_k(z)|^4\}\Big] \\
\times(1+k^8t^4)e^{-2k^2 b(z) t} |y_k(z,0)|^2_2\end{aligned}\numberthis \label{eq:sens2case3}
\end{align*}
for $t\geq 0$, $k\in\Z\setminus\{0\}$.
Most notably, as $\partial_z a,\partial_z b,\partial_z^2 a,\partial_z^2b\in L^\infty(\R)$, the multiplicative constant
\begin{align*}
\wt{\mathscr{C}}_k(z) := 1+\big(12+585 ( 1+ |\partial_z^2 \lambda_k(z)|^2)\big)\max\{1,|\partial_z \lambda_k(z)|^4\}
\end{align*}
is uniformly bounded in $z\in\R$. This includes the problematic limit $\partial_z \lambda_k(z) \to 0$ in combination with $\partial^2_z\lambda_k(z)\not= 0$ (defect 2 to defect 1), which is our desired result for Case 3.\\

Combining Cases 1--3 for $z\in\R$ leads to:
\begin{theorem}
Let $a,b\in C^2(\R)$ where $b_0:=\inf_{z\in\R}b(z)>0$ and \linebreak$b,\partial_z a,\partial_z b,\partial_z^2 a, \partial_z^2 b\in L^\infty(\R)$. Then, there exists a constant $\mathscr{C}>0$, such that normalized  solutions $y(x,z,t)=(u,v,w)^T$ to the system of equations \eqref{eq:heat}, \eqref{eq:sensheat} and \eqref{eq:sens2heat} with steady state $y^\infty := (1,0,0)^T$ satisfy
\begin{align*}
\sup_{z\in\R}\|y(\cdot,z,t)-y^{\infty}\|^2_{L^2(0,2\pi;\R^3)} \leq \mathscr{C}(1+t^4)e^{-2b_0 t} \sup_{z\in\R}\|y(\cdot,z,0)-y^\infty \|^2_{L^2(0,2\pi;\R^3)}
\end{align*}
for $t\geq 0$.
\end{theorem}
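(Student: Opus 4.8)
The plan is to reduce the PDE statement to the mode-wise ODE estimates already assembled in Cases 1--3 and then pass to a sum over Fourier modes via Parseval's identity, exactly mirroring the proof of Theorem~\ref{th:sens1Heat}. First I would consolidate the three cases: for each fixed $k\in\Z\setminus\{0\}$ and each $z\in\R$, one of \eqref{eq:sens2case1}, \eqref{eq:sens2case2}, \eqref{eq:sens2case3} applies, and in every case the bound has the form
\begin{align*}
|y_k(z,t)|_2^2 \leq \mathscr{C}_k(z)\,(1+k^8 t^4)\,e^{-2k^2 b(z) t}\,|y_k(z,0)|_2^2,
\end{align*}
after noting that the weaker factors $(1+k^4 t^2)$ in Cases 1--2 are dominated by $(1+k^8 t^4)$ for $t\geq 0$, $k\neq 0$. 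Here $\mathscr{C}_k(z)$ is, respectively, $1$, $12\max\{2,1+|\partial_z^2\lambda_k(z)|^2\}$, or $\wt{\mathscr{C}}_k(z)=1+(12+585(1+|\partial_z^2\lambda_k(z)|^2))\max\{1,|\partial_z\lambda_k(z)|^4\}$.

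Next I would extract a single $z$- and $k$-independent constant. Since $\lambda_k(z)=b(z)+i a(z)/k$, we have $\partial_z\lambda_k(z)=\partial_z b(z)+i\partial_z a(z)/k$ and $\partial_z^2\lambda_k(z)=\partial_z^2 b(z)+i\partial_z^2 a(z)/k$, so $|\partial_z\lambda_k(z)|^2\leq \|\partial_z b\|_\infty^2+\|\partial_z a\|_\infty^2$ and $|\partial_z^2\lambda_k(z)|^2\leq \|\partial_z^2 b\|_\infty^2+\|\partial_z^2 a\|_\infty^2$ uniformly in $k\neq 0$ and $z\in\R$; this is exactly where the $L^\infty$-hypotheses on the first and second $z$-derivatives of $a,b$ enter. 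Taking the supremum over all cases yields a finite $\wt{\mathscr C}:=\sup_{k\neq 0,\,z\in\R}\mathscr{C}_k(z)<\infty$.

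Then I would run the Parseval argument: normalization forces the zeroth modes of $v$ and $w$ to vanish for all $t$, so $y_0(z,t)\equiv y^\infty_0=(1,0,0)^T$ and $y_0(z,t)-y^\infty_0=0$; hence
\begin{align*}
\|y(\cdot,z,t)-y^\infty\|_{L^2(0,2\pi;\R^3)}^2 = \frac{1}{2\pi}\sum_{k\in\Z\setminus\{0\}} |y_k(z,t)|_2^2
\leq \frac{\wt{\mathscr C}}{2\pi}\sum_{k\neq 0}(1+k^8 t^4)e^{-2k^2 b_0 t}|y_k(z,0)|_2^2,
\end{align*}
using $b(z)\geq b_0$. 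The decay-in-$k$ is absorbed by the elementary bound $(1+k^8 t^4)e^{-2k^2 b_0 t}\leq c\,(1+t^4)e^{-2b_0 t}$ for all $t\geq 0$, $k\neq 0$, with $c:=\sup_{t\geq 0}(1+t^4)e^{-2b_0 t}<\infty$ (here one compares $k^8 t^4 e^{-2k^2 b_0 t}$ with $t^4 e^{-2b_0 t}$ by the substitution $s=k^2 t$ and monotonicity of $k^4 e^{-2(k^2-1)b_0 s/k^2}$, or simply by noting $k^8 e^{-2(k^2-1)b_0 t}$ is bounded over $k\neq 0$, $t\geq 0$). Setting $\mathscr C:= c\,\wt{\mathscr C}$ and then bounding $\sum_{k\neq 0}|y_k(z,0)|_2^2 = 2\pi\|y(\cdot,z,0)-y^\infty\|^2_{L^2}$ and finally taking the supremum over $z\in\R$ completes the proof. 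The main obstacle is bookkeeping rather than conceptual: one must check that the adjusted Case~3 estimate \eqref{eq:sens2case3}, which was derived via the modified matrix $\wt P_k(z,0)=I$ rather than directly from Theorem~\ref{th:EuclidDecay}, still delivers a $z$-uniform constant (it does, precisely because $\wt{\mathscr C}_k(z)$ depends on $z$ only through $|\partial_z\lambda_k(z)|$ and $|\partial_z^2\lambda_k(z)|$, both controlled by the $L^\infty$-norms), and that the polynomial weight $(1+k^8 t^4)$ common to all three cases is the correct one to factor through Parseval without losing sharpness of the temporal rate $(1+t^4)e^{-2b_0 t}$.
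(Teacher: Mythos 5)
Your proposal is correct and follows essentially the same route as the paper: the paper's proof likewise combines the case estimates \eqref{eq:sens2case1}, \eqref{eq:sens2case2}, \eqref{eq:sens2case3} into a single mode-wise bound $|y_k(z,t)|_2^2\leq\wt{\mathscr{C}}(1+k^8t^4)e^{-2k^2b(z)t}|y_k(z,0)|_2^2$ with $\wt{\mathscr{C}}$ uniform in $k$ and $z$ (uniformity coming exactly from the $L^\infty$ bounds on $\partial_z a,\partial_z b,\partial_z^2 a,\partial_z^2 b$ via $\partial_z\lambda_k$ and $\partial_z^2\lambda_k$), and then invokes Parseval's identity in analogy to Theorem \ref{th:sens1Heat}. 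You merely spell out the steps the paper leaves implicit (the vanishing zeroth mode and the elementary bound $(1+k^8t^4)e^{-2k^2b_0t}\leq c(1+t^4)e^{-2b_0t}$), all of which are correct.
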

\begin{proof}
Analogous to the first order sensitivity equations, combining the three above cases of defects of $D_k(z)$, leads to a decay estimate uniform in $z\in\R$. Due to the estimates \eqref{eq:sens2case1}, \eqref{eq:sens2case2} and \eqref{eq:sens2case3}, there exists an $\wt{\mathscr{C}}>0$ independent of $z\in\R$ and $k\in\Z\setminus\{0\}$ such that
\begin{align*}
|y_k(z,t)|^2_2 \leq \wt{\mathscr{C}} (1+k^8t^4 ) e^{-k^2 b(z) t} |y_k(z,0)|^2_2, \quad t\geq 0.
\end{align*}
With Parseval's identity (in analogy to the proof of Theorem \ref{th:sens1Heat}) the desired result follows.\qed
\end{proof}

\subsection{Decay estimates with Duhamel's formula}\label{subsec:duhamel}
Another method to get decay estimates with sharp rate for sensitivity equations is to use Duhamel's formula instead of the above presented Lyapunov functional method.

In the case of the Fourier transformed linear heat-convection equation \eqref{eq:heatFourier}, the solution  $u_k(z,t)$ is given explicitly as $u_k(z,t) = e^{-k^2 \lambda_k(z) t} u_k(z,0)$. We can interpret the Fourier transformed first order sensitivity equation \eqref{eq:sensheatFourier} as an inhomogeneous equation of form
\begin{align*}
\partial_t v_k(z,t)  +k^2 \lambda_k(z) v_k(z,t) = g_k(z,t)
\end{align*}
with $g_k(z,t):=-k^2 (\partial_z\lambda_k(z)) u_k(z,t)$.
By Duhamel's formula we get
\begin{align*}
v_k(z,t) &= e^{-k^2 \lambda_k(z) t}v_k(z,0) - k^2(\partial_z\lambda_k(z)) \int_0^t e^{-k^2 \lambda_k(z)(t-s)} e^{-k^2 \lambda_k(z) s}u_k(z,0) ds\\
&=e^{-k^2 \lambda_k(z) t}v_k(z,0) - k^2(\partial_z\lambda_k(z))t e^{-k^2 \lambda_k(z) t} u_k(z,0).
\end{align*}
By using the solution propagator norm \eqref{eq:evpropest} with $\epsilon t = k^2t$, this yields the decay estimate for each Fourier-mode $y_k(z,t)=(u_k,v_k)^T$, $k\in\Z\setminus\{0\}$:
\begin{align*}
|y_k(z,t)|^2_2 \leq  \frac{4}{3} (1+k^4t^2 ) e^{-2k^2 b(z) t} |y_k(z,0)|^2_2,\quad t\geq 0.
\end{align*}
By iteration, Duhamel's formula gives a decay estimate for sensitivity equations of \emph{arbitrary} order. A similar method of iteratively deducing decay estimates was used e.g.\ in \cite{JLM} (see Theorems 4.1 and 4.2) and \cite{LiuJin} (see Theorems 2.1 and 4.4).

%


\section{Two velocity BGK model with uncertain coefficients}\label{sec:bgk}

Our starting point is the linear one-dimensional BGK-model
for the probability density $f(x,v,t)\geq 0$. This kinetic equation reads
\begin{align}\label{eq:bgk}
\partial_t f +v \partial_x f
= M_{T}(v) \int_\R f(x,v,t) dv - f(x,v,t),
\end{align}
for $x\in\T^1$, velocities $v\in\R$, and the Maxwell distribution \tw{$M_{T}(v)=(2\pi T)^{-\frac{1}{2}} e^{-\frac{|v|^2}{2T}}$}, with given temperature $T$. Exponential decay towards the equilibrium for this $v$-continuous model was proved in \S 4.3 of \cite{Achleitner2016}.
We reduce the model drastically and allow only for two discrete velocities $v_\pm = \pm 1$, denoting $f_\pm(x,t):=f(x,\pm 1,t)$. This leads to the system of equations
\begin{align*}
\partial_t f_+(x,t) &= - \partial_x f_+(x,t) + \sigma(f_-(x,t)-f_+(x,t)),\\
\partial_t f_-(x,t) &= \partial_x f_-(x,t) - \sigma(f_-(x,t)-f_+(x,t)),
\end{align*}
called \emph{Goldstein-Taylor model} with the relaxation coefficient $\sigma>0$. These equations serve as a toy model that still exhibits many features of \eqref{eq:bgk}. For $\sigma=\frac12$, an explicit exponential decay rate of the two velocity model by means of Lyapunov functionals was shown in \S 1.4 of \cite{Dolbeault2015}. The sharp decay estimate was found in \cite{Achleitner2016}, \S 4.1 with a refined functional. We are interested here in augmenting the large-time analysis with a sensitivity analysis.

\subsection{First order parameter sensitivity analysis}\label{subsec:bgkintro}
Similarly to \S \ref{sec:hc} we allow the relaxation coefficient to contain uncertainty and denote it by $\sigma(z)$. Throughout \S \ref{sec:bgk}, assume $\sigma\in C^1(\R)$, $\partial_z\sigma\in L^\infty(\R)$, $\sigma_0:=\inf_{z\in\R}\sigma(z)>0$, and $\sigma_1:=\sup_{z\in\R}\sigma(z)<2$. This leads to the following equations for $x\in \T^1$, the parameter $z\in\R$ and $t\geq 0$:
\begin{align}\label{eq:2veloBGK}
\begin{aligned}
\partial_t f_+(x,z,t) &= -\partial_x f_+(x,z,t) + \frac{\sigma(z)}{2}(f_-(x,z,t) - f_+(x,z,t)),\\
\partial_t f_-(x,z,t) &= \partial_x f_-(x,z,t) - \frac{\sigma(z)}{2}(f_-(x,z,t)-f_+(x,z,t)),
\end{aligned}
\end{align}
with initial condition
\begin{align*}
f_\pm(x,z,0)=f_\pm^0(x,z).
\end{align*}
For each $z\in\R$ assume $f^0_\pm(\cdot,z,t)\in L^1_+(\T^1)$.

The model is conserving total mass (in time), i.e. $\int_0^{2\pi} [f_+(x,z,t)\linebreak
+f_-(x,z,t)] dx = \emph{const}.$ for all $z\in\R$. The unique normalized steady state for the system is given as $f^\infty_{+}(z)=f_{-}^{\infty}(z)=\frac{1}{2}$. Correspondingly, we shall also assume that the initial total mass is normalized, as\ $\frac{1}{2\pi}\int^{2\pi}_0 [f_+^0(x,z) + f_-^0(x,z)] dx = 1$.

To analyze the (linear order) sensitivity of the equation with respect to the relaxation function $\sigma(z)$, we investigate the corresponding family of sensitivity equations for $g_\pm(x,z,t):=\partial_z f_\pm(x,z,t)\in\R$. For each $z\in\R$, they are given as
\begin{align}\label{eq:2veloBGKsen}
\begin{aligned}
\partial_t g_+(x,z,t) &= - \partial_x g_+(x,z,t)+ \frac{\sigma(z)}{2}(g_-(x,z,t)-g_+(x,z,t)) \\
&\qquad+ \frac{\partial_z\sigma(z)}{2}(f_-(x,z,t)-f_+(x,z,t)),\\
\partial_t g_-(x,z,t) &= \partial_x g_-(x,z,t)- \frac{\sigma(z)}{2}(g_-(x,z,t)-g_+(x,z,t))\\
&\qquad - \frac{\partial_z\sigma(z)}{2}(f_-(x,z,t)-f_+(x,z,t)).
\end{aligned}
\end{align}
For each $z\in\R$, this system \eqref{eq:2veloBGKsen} is also conserving total mass (in time), i.e.\ $\int_0^{2\pi}[g_+(x,z,t)+g_-(x,z,t)]dx=const.$ Due to the normalization of $f_\pm^0(x,z)$, we have $\frac{1}{2\pi}\int_0^{2\pi}[g^0_+(x,z)+g^0_-(x,z)]dx=0$ with the corresponding steady state given as $g_+^\infty(x,z)=g_-^\infty(x,z)=\partial_z f_\pm^\infty(x,z) = 0$.

To analyze the decay behavior of solutions of the above system \eqref{eq:2veloBGK}--\eqref{eq:2veloBGKsen}, we consider the Fourier series $f_\pm(x,z,t)=\sum_{k\in\Z} f_{\pm,k}(z,t) e^{ikx}$. It is convenient to introduce the following linear combinations of the Fourier modes $f_{\pm,k},\, k\in\Z$:
\begin{align*}
u_{k}(z,t):=\begin{pmatrix}
f_{+,k}(z,t)+f_{-,k}(z,t)\\
f_{+,k}(z,t)-f_{-,k}(z,t)
\end{pmatrix}.
\end{align*}
For each $k\in\Z$ they satisfy the decoupled ODE system
\begin{align}\label{eq:ODEAk}
\partial_t u_k(z,t) =
-\underbrace{\begin{pmatrix}
0~&ik\\
ik~& \sigma(z)
\end{pmatrix}}_{\mathllap{A_k(z):=}}u_k(z,t).
\end{align}
For $k\in\Z$, the matrix $A_k(z)$ has the eigenvalues $$\lambda_{\pm,k}(z): = \frac{\sigma(z)}{2}\pm i \sqrt{k^2-\frac{\sigma^2(z)}{4}}.$$ Note that the discriminant is always positive for $k\neq 0$, due to our assumption $0<\sigma(z)<2$.

The eigenvectors are given by
\begin{align*}
\hat{v}_{\pm,k}(z): =
\begin{pmatrix}
\frac{i\lambda_\mp(z)}{k}, &
1
\end{pmatrix}^T\hspace{-2mm},\, k\in\Z\setminus\{0\},
\qquad \text{and} \qquad
\hat{v}_{+,0}:=\begin{pmatrix} 1\\ 0 \end{pmatrix},\, \hat{v}_{-,0}:=\begin{pmatrix} 0\\ 1 \end{pmatrix}.
\end{align*}
Similarly to \eqref{eq:ODEAk}, the Fourier modes
\begin{align*}
w_{k}(z,t):=\begin{pmatrix}
g_{+,k}(z,t)+g_{-,k}(z,t)\\
 g_{+,k}(z,t)-g_{-,k}(z,t)
\end{pmatrix},
\end{align*}
with $g_{\pm,k}=\partial_z f_{\pm, k}$, satisfy the ODE systems
\begin{align*}
\partial_t w_k(z,t) =
-\begin{pmatrix}
0~&ik\\
ik~&\sigma(z)
\end{pmatrix}w_k(z,t) -
\begin{pmatrix}
0~ & 0\\
0~ & \partial_z\sigma(z)
\end{pmatrix}u_k(z,t),\quad k\in\Z.
\end{align*}
Combining the two ODE systems for $u_k$ and $w_k$ leads to the following $4\times 4$-systems describing the first order sensitivity equations for the model \eqref{eq:2veloBGK} in Fourier space:
\begin{align}\label{eq:4times4}
\partial_t
\underbrace{\begin{pmatrix}
u_k(z,t)\\ w_k(z,t)
\end{pmatrix}}_{\udef{y_k(z,t):=}}
=
-\underbrace{\left(\begin{array}{cc|cc}
0~ & ik & ~~0~~& 0\\
ik &\sigma(z)  & 0&0\\
\hline
0&0&0&ik\\
0&\partial_z\sigma(z)&ik&\sigma(z)
\end{array}
\right)}_{\mathllap{D_k(z):=}}
\begin{pmatrix}
u_k(z,t)\\w_k(z,t)
\end{pmatrix},\quad k\in\Z.
\end{align}
Due to the block triangular form of the matrix $D_k(z)$, the eigenvalues of $D_k(z)$ are not affected by $\partial_z\sigma(z)$:
\begin{align*}
\lambda_{\pm,k}(z): = \frac{\sigma(z)}{2}\pm i \sqrt{k^2-\frac{\sigma^2(z)}{4}},\quad k\in\Z,
\end{align*}
where both eigenvalues have algebraic multiplicity two.

For $k\in\Z$ the matrix $D_k(z)$ is defective, if and only if $\partial_z\sigma(z) \neq 0$: For $k=0$ only the eigenvalue $\lambda_{+,0}(z)=\sigma(z)$ is defective of order one, and for $k\neq 0$ both eigenvalues $\lambda_{\pm,k}(z)$ are defective of order one.
\subsection{Sharp decay estimates for the parameter sensitivity equations}
The reasons for the assumptions from \S \ref{subsec:bgkintro} imposed on $\sigma(z)$ will become evident in the following analysis: The lower bound $\sigma_0>0$ is needed in order to get a uniform in $z\in\R$ decay rate. The assumptions $\sigma_1 <2$ and $\|\partial_z \sigma\|_\infty<\infty$ are necessary for the multiplicative constant in the decay estimate (obtained by Theorem \ref{th:EuclidDecay}) to be bounded for all $z\in\R$.

The decay rate of each mode $k\in\Z$ is determined by the size of the spectral gap of $D_k(z)$, which we denote by $\mu_k(z)>0$, and its defectiveness. For $D_0(z)$ the eigenvalues are $\lambda_{+,0}(z)=\sigma(z)$ and $\lambda_{-,0}(z)=0$. Hence, the spectral gap for the zeroth mode is $\mu_0(z)=\sigma(z)$. The zeroth mode of the steady state in our transformed setting is given as
$y_0^\infty = (f_{+,0}^\infty + f_{-,0}^\infty,f_{+,0}^\infty - f_{-,0}^\infty, g_{+,0}^\infty + g_{-,0}^\infty, g_{+,0}^\infty - g_{-,0}^\infty)^T= (1, 0,0,0)^T$.
This implies that any solution to $\eqref{eq:4times4}$ for $k=0$, $z\in\R$ fulfills the decay estimate
\begin{align}\label{eq:case0}
\left|y_0(z,t)-y_0^\infty\right|^2_2 \leq \mathscr{C}_0(z) (1+t^2) e^{-2 \sigma(z) t}\left|y_0(z,0)-y_0^\infty\right|^2_2,\quad t\geq 0,
\end{align}
with the constant
\begin{align*}
\mathscr{C}_0(z) = 12\cdot \max\{2, 1+|\partial_z\sigma(z)|^2\}\leq 12\cdot \max\{2,1+\|\partial_z \sigma\|_\infty^2\}
\end{align*}
that can be computed in analogy to Case 2 in \S \ref{subsec:hcSens1}. Note that Theorem \ref{th:EuclidDecay} can only be applied here to the two-dimensional subspace of $\C^4$ that pertains to $\lambda_{+,0}(z)=\sigma(z)$. In the orthogonal subspace corresponding to $\lambda_{-,0}=0$, we have $y_{-,0}(z,t)=y_{-,0}(z,0) = y_{-,0}^\infty = (1, *, 0, *)^T$, where `$*$' denotes the elements of $y_{+,0}$.

For the modes $k\in\Z\setminus\{0\}$ the matrix $D_k(z)$ is positive stable and the spectral gap is independent of $k$ (in contrast to the examples in \S \ref{sec:hc} and \S \ref{sec:fpe}):
\begin{align*}
\mu_k(z):=\min\{\re (\lambda_{+,k}(z)),\re (\lambda_{-,k}(z))\}= \frac{\sigma(z)}{2}.
\end{align*}
Moreover, the steady state is given as $y_k^\infty = 0\in\C^4$.\bigskip

In the next step, we apply Theorem \ref{th:EuclidDecay} to the system \eqref{eq:4times4} to get a sharp decay estimate for each Fourier mode $k\in\Z\setminus\{0\}$ of type
\begin{align*}
|y_k(z,t)-y_k^\infty |^2_2 \leq \mathscr{C}_k(z) 2(1+t^2) e^{-\sigma(z)t} |y_k(z,0)-y_k^\infty|^2_2.
\end{align*}
A summation over the estimates for all Fourier modes will allow us to apply Parseval's identity on the left-hand side. In order to apply it also on the right-hand side one requires a uniform in $k$ and $z$ bound of the multiplicative decay constant $\mathscr{C}_k(z)$. We shall derive this bound now.

For each $k\in\Z\setminus\{0\}$, the matrix $P_k(z,0)$ of Theorem \ref{th:EuclidDecay} has to be chosen depending on the defectiveness of the matrix $D_k(z)$, which is determined by the value of $\partial_z\sigma(z)$.

\subsubsection*{Case 1; $z\in\R$ such that $\partial_z\sigma(z)=0$:}
The matrix $D_k(z)$ is non-defective, and we construct the matrix $P_k(z,0)$ according to \eqref{eq:defP}: $N=4$, with  $l_n=1$ for $n\in\{1,\ldots,4\}$, $M=1$, i.e. each $n$ is in Case 1 of \S \ref{sec:ODE}. Choosing $\beta_{n,k}=1$ leads to
\begin{align*}
P_k(z,0):=v_{1,+,k}^{(0)}\otimes v_{1,+,k}^{(0)}+v_{1,-,k}^{(0)}\otimes v_{1,-,k}^{(0)}+v_{2,+,k}^{(0)}\otimes v_{2,+,k}^{(0)}+v_{2,-,k}^{(0)}\otimes v_{2,-,k}^{(0)},
\end{align*}
and the eigenvectors of $D^H_k(z)$ corresponding to $\overline{\lambda}_{\mp,k}(z)=\lambda_{\pm,k}(z)$ (i.e.\ satisfying the equation $D^H_k v^{(0)}_{i,\pm,k} = \lambda_{\pm,k}v^{(0)}_{i,\pm,k}$ for $i=1,2$) are given as
\begin{align*}
&v_{1,\pm,k}^{(0)}(z) =
\begin{pmatrix}
-\frac{i\lambda_{\mp,k}(z)}{k},& 1, & 0,& 0
\end{pmatrix}^T,
&v_{2,\pm,k}^{(0)}(z) =
\begin{pmatrix}
 0,& 0, & -\frac{i\lambda_{\mp,k}(z)}{k},& 1
\end{pmatrix}^T.
\end{align*}
For each fixed value $\sigma\in [\sigma_0,\sigma_1]$, we have
\begin{align*}
\lim_{k\to+\infty}\tilde{v}_{1,\pm,k}^{(0)}(\sigma) =
\begin{pmatrix}
\mp1\\ 1\\ 0\\ 0
\end{pmatrix},&&
\lim_{k\to+\infty} \tilde{v}_{2,\pm,k}^{(0)}(\sigma) =
\begin{pmatrix}
0\\ 0\\ \mp1\\ 1
\end{pmatrix},
\end{align*}
as well as
\begin{align*}
\lim_{k\to-\infty} \tilde{v}_{1,\pm,k}^{(0)}(\sigma) =
\begin{pmatrix}
\pm1\\ 1\\ 0\\ 0
\end{pmatrix},&&
\lim_{k\to-\infty} \tilde{v}_{2,\pm,k}^{(0)}(\sigma) =
\begin{pmatrix}
0\\ 0\\ \pm1\\ 1
\end{pmatrix},
\end{align*}
where we used the notations $\tilde{v}_{i,\pm,k}^{(0)}(\sigma(z)) = v_{i,\pm,k}^{(0)}(z)$ for $i=1,2$. Denoting $\tilde{P}_k(\sigma(z),0)=P_k(z,0)$,
it follows that
\begin{align}\label{eq:PkconvC1}
\lim_{|k|\to \infty} \max_{\sigma\in[\sigma_0,\sigma_1]}| \tilde{P}_k(\sigma,0) -2 I|_2=0.
\end{align}
For each $\sigma\in[\sigma_0,\sigma_1]$ and $k\neq 0$, the four vectors $\tilde{v}_{1,\pm,k}(\sigma)$ and $\tilde{v}_{2,\pm,k}(\sigma)$ are linearly independent and hence, the matrix $\tilde{P}_k(\sigma,0)$ is positive definite. As all entries of $\tilde{P}_k(\sigma,0)$ are continuous in $\sigma\in[\sigma_0,\sigma_1]$ and  the eigenvalues are continuous with respect to the matrix entries, we get
\begin{align*}
\inf_{z\in\R} \lambda_{\min}^{P_k(z,0)} \geq \min_{\sigma\in[\sigma_0,\sigma_1]} \lambda^{\tilde{P}_k(\sigma,0)}_{\min} =: \lambda_{k,\min}>0.
\end{align*}
Similarly, we get
\begin{align*}
\sup_{z\in\R} \lambda_{\max}^{P_k(z,0)} \leq \max_{\sigma\in[\sigma_0,\sigma_1]} \lambda_{\max}^{\tilde{P}_k(\sigma,0)}=:\lambda_{k,\max} <\infty.
\end{align*}
Because of \eqref{eq:PkconvC1} we have $\lambda_{k,\max},\lambda_{k,\min}\to 2$ for $|k|\to\infty$, and therefore \begin{align*}
\lambda_{\min}:=\min_{k\in\Z\setminus\{0\}} \lambda_{k,\min} >0,&& \lambda_{\max}:=\max_{k\in\Z\setminus\{0\}} \lambda_{k,\max} < \infty.
\end{align*}
We summarize Case 1: For all $z\in\R$ such that $D_k(z)$ is non-defective, Theorem \ref{th:EuclidDecay} yields the decay estimate
\begin{align}\label{eq:modedecayNonDef}
\left|y_k(z,t)-y_k^\infty\right|^2_2 \leq 2\mathscr{C}_k(z)e^{-\sigma(z) t}|y_k(z,0)-y_k^\infty|^2_2,
\end{align}
with a uniform bound for the constants $\mathscr{C}_k(z)$ (defined in \eqref{eq:constantM}), i.e.
\begin{align*}
0<\mathscr{C}_k(z) = (\lambda_{\min}^{P_k(z,0)})^{-1}\lambda_{\max}^{P_k(z,0)} \leq (\lambda_{\min})^{-1}\lambda_{\max} =:\mathscr{C}<\infty,
\end{align*}
for $z\in\R$ and $k\neq 0$.

\subsubsection*{Case 2; $z\in\R$ such that $\partial_z\sigma(z)\neq 0$:} The two eigenvalues $\lambda_{\pm,k}(z)$ of $D_k(z)$ are both defective.
The eigenvectors and generalized eigenvectors of $D^H_k(z)$ corresponding to $\overline{\lambda}_{\mp,k}(z)=\lambda_{\pm,k}(z)$ (i.e.\ the generalized eigenvectors satisfy $D^H_k v_{\pm,k}^{(1)}=\lambda_{\pm,k}v_{\pm,k}^{(1)} + v^{(0)}_{\pm,k}$) are given as
\begin{align*}
&v_{\pm,k}^{(0)}(z) =
\begin{pmatrix}
-\frac{i\lambda_{\mp,k}(z)}{k},& 1, & 0,& 0
\end{pmatrix}^T,
\\
&v_{\pm,k}^{(1)}(z) =
\begin{pmatrix}
\frac{i\lambda^2_{\mp,k}(z)}{2k^3},&
\frac{\lambda_{\mp,k}(z)}{2k^2},&
\frac{-i\lambda_{\mp,k}(z)}{\sigma_z(z)k}(1-\frac{\lambda^2_{\mp,k}(z)}{k^2}),&
\frac{1}{\sigma_z(z)}(1-\frac{\lambda^2_{\mp,k}(z)}{k^2})
\end{pmatrix}^T.
\end{align*}
Now we construct the matrix $P_k(z,0)$ according to \eqref{eq:defP}: $N=2$, with  $l_+=l_-=M=2$, i.e. both $n\in\{+,-\}$ are in Case 3 of \S \ref{sec:ODE}. Choosing $\beta^1_{\pm,k}=1$ and $\beta^2_{\pm,k}=\frac{ (\sigma_z(z))^2}{4}$, leads to
\begin{align*}
P_k(z,0)&:= v_{+,k}^{(0)}\otimes v_{+,k}^{(0)}+\frac{ (\sigma_z(z))^2}{4}v_{+,k}^{(1)}\otimes v_{+,k}^{(1)}\\
&\qquad \qquad +v_{-,k}^{(0)}\otimes v_{-,k}^{(0)}+\frac{ (\sigma_z(z))^2}{4}v_{-,k}^{(1)}\otimes v_{-,k}^{(1)}.
\end{align*}
As mentioned in \S \ref{sec:hc} the specific choice of weights $\beta_\pm^m$ is crucial in order to get a uniformly bounded constant $\mathscr{C}_k(z)$  in the \emph{non-defective limit} $\partial_z \sigma(z)\to 0$.

Abbreviating $L:=\|\partial_z\sigma\|_\infty$, for each value $(\sigma,\sigma_z)\in [\sigma_0,\sigma_1]\times[-L,L]$ (with notation in analogy to Case 1), we have
\begin{align*}
\lim_{k\to+\infty}\tilde{v}_{\pm,k}^{(0)}(\sigma,\sigma_z) =
\begin{pmatrix}
\mp1\\ 1\\ 0\\ 0
\end{pmatrix},&&
\lim_{k\to+\infty}\frac{\sigma_z}{2} \tilde{v}_{\pm,k}^{(1)}(\sigma,\sigma_z) =
\begin{pmatrix}
0\\ 0\\ \mp1\\ 1
\end{pmatrix},
\\
\end{align*}
and
\begin{align*}
\lim_{k\to-\infty} \tilde{v}_{\pm,k}^{(0)}(\sigma,\sigma_z) =
\begin{pmatrix}
\pm1\\ 1\\ 0\\ 0
\end{pmatrix},&&
\lim_{k\to-\infty}\frac{\sigma_z}{2} \tilde{v}_{\pm,k}^{(1)}(\sigma,\sigma_z) =
\begin{pmatrix}
0\\ 0\\ \pm 1\\ 1
\end{pmatrix}.
\end{align*}
It follows that
\begin{align}\label{eq:defectivePk}
\lim_{|k|\to \infty}  \max_{(\sigma,\sigma_z)\in[\sigma_0,\sigma_1]\times[-L,L]} |\tilde{P}_k(\sigma,\sigma_z,0) -  2I|_2= 0.
\end{align}
For each $(\sigma,\sigma_z)\in[\sigma_0,\sigma_1]\times[-L,L]$, the four vectors $\tilde{v}^{(0)}_{\pm,k}(\sigma,\sigma_z)$ and $\sigma_z \tilde{v}^{(1)}_{\pm,k}(\sigma,\sigma_z)$ are linearly independent. This can be checked by considering the last two components of $\sigma_z \tilde{v}^{(1)}_{\pm,k}(\sigma,\sigma_z)$: They have the same form as the last two components of $\tilde{v}^{(0)}_{2,\pm,k}(\sigma,\sigma_z)$ from Case 1 above, up to a multiplicative factor that is non-zero for all $k\neq 0$ due to $\sigma_1<2$.

Hence, the matrix $\tilde{P}_k(\sigma,\sigma_z,0)$ is positive definite on $[\sigma_0,\sigma_1]\times[-L,L]$. Due to the specific choice of $\beta^2_{\pm,k}=\frac{ (\sigma_z(z))^2}{4}$ all  entries of $\tilde{P}_k(\sigma,\sigma_z,0)$ are continuous with respect to $(\sigma,\sigma_z)$. With the same argument as in Case 1, we get
\begin{align*}
 \inf_{z\in\R} \lambda_{\min}^{P_k(z,0)} \geq \min_{(\sigma,\sigma_z)\in[\sigma_0,\sigma_1]\times[-L,L]} \lambda^{\tilde{P}_k(\sigma,\sigma_z,0)}_{\min} =:\lambda_{k,\min}>0,
\end{align*}
and
\begin{align*}
\sup_{z\in\R} \lambda_{\max}^{P_k(z,0)} \leq \max_{(\sigma,\sigma_z)\in[\sigma_0,\sigma_1]\times[-L,L]} \lambda_{\max}^{\tilde{P}_k(\sigma,\sigma_z,0)}:=\lambda_{k,\max} <\infty.
\end{align*}
The limit \eqref{eq:defectivePk} implies
\begin{align*}
\lambda_{\min}:=\min_{k\in\Z\setminus\{0\}} \lambda_{k,\min} >0,&& \lambda_{\max}:=\max_{k\in\Z\setminus\{0\}} \lambda_{k,\max} < \infty.
\end{align*}
We summarize Case 2: For all $z\in\R$ such that $D_k(z)$ is defective, Theorem \ref{th:EuclidDecay} yields the decay estimate
\begin{align}\label{eq:modedecayDef}
\left|y_k(z,t)-y_k^\infty\right|^2_2 \leq \mathscr{C}_k(z) (1+t^2) e^{-\sigma(z) t}|y_k(z,0)-y_k^\infty|^2_2.
\end{align}
Here, the constants $\mathscr{C}_k(z)$ from \eqref{eq:constantM}, using $c_2 = 6$, are uniformly bounded since $\partial_z \sigma \in L^\infty(\R)$:
\begin{align*}
0<\mathscr{C}_k(z) &= 12 \cdot (\lambda_{\min}^{P_k(z,0)})^{-1}\lambda_{\max}^{P_k(z,0)} \Big(1+\frac{\frac{ (\sigma_z(z))^2}{4}}{\min\Big\{1,\frac{ (\sigma_z(z))^2}{4}\Big\}}\Big) \\
&\leq 12\cdot(\lambda_{\min})^{-1}\lambda_{\max} \max\{2,1+\frac{\|\partial_z\sigma\|_\infty^2}{4}\}=:\mathscr{C}<\infty
\end{align*}
for all $z\in\R$ and $k\neq 0$.\bigskip

Now, we have all the necessary ingredients to estimate the decay of solutions to the system \eqref{eq:2veloBGK}--\eqref{eq:2veloBGKsen}. Denote
\begin{align*}
&\Phi:=\begin{pmatrix}
f_+,&
f_-,&
g_+,&
g_-
\end{pmatrix}^T, &&\Phi^\infty := \begin{pmatrix}
\frac12,& \frac12,&0&0
\end{pmatrix}^T,\\
& y:=\begin{pmatrix}
f_+ + f_-,& f_+-f_-,& g_++g_-,& g_+-g_-
\end{pmatrix}^T, &&y^\infty:=
\begin{pmatrix}
1, & 0, & 0, & 0
\end{pmatrix}^T.
\end{align*}

\begin{theorem}
Let $\sigma\in C^1(\R)$ where $\sigma_0:=\inf_{z\in\R}\sigma(z)>0$, $\sigma_1:=\sup_{z\in\R}\sigma(z)<2$ and $\partial_z \sigma\in L^\infty(\R)$. Then, there exists a constant $\mathscr{C}>0$, such that normalized solutions $\Phi(x,z,t)$ of the system \eqref{eq:2veloBGK}--\eqref{eq:2veloBGKsen} satisfy
\begin{align*}
\sup_{z\in\R}\|\Phi(\cdot,z,t)-\Phi^{\infty}\|^2_{L^2(0,2\pi;\R^4)} \leq \mathscr{C}(1+t^2)e^{-\sigma_0 t} \sup_{z\in\R}\|\Phi(\cdot,z,0)-\Phi^\infty \|^2_{L^2(0,2\pi;\R^4)}
\end{align*}
for $t\geq 0$.
\end{theorem}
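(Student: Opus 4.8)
The plan is to transfer the already-established mode-wise estimates \eqref{eq:case0}, \eqref{eq:modedecayNonDef} and \eqref{eq:modedecayDef} back to the PDE level via Parseval's identity, exactly along the lines of the proof of Theorem \ref{th:sens1Heat}; the only extra bookkeeping is the separate status of the $k=0$ mode and of the two spectral gaps $\sigma(z)$ (for $k=0$) and $\sigma(z)/2$ (for $k\neq 0$). First I would note that the pointwise-in-$x$ change of variables $\Phi\mapsto y$ --- block-diagonal with two copies of $\bigl(\begin{smallmatrix}1&1\\1&-1\end{smallmatrix}\bigr)$ --- is effected by a fixed invertible matrix $T$ with $T\Phi^\infty=y^\infty$, so that $\kappa^{-1}\|\Phi-\Phi^\infty\|^2_{L^2}\le\|y-y^\infty\|^2_{L^2}\le\kappa\|\Phi-\Phi^\infty\|^2_{L^2}$ for a constant $\kappa$ independent of $z$ and $t$ (in fact $\kappa=2$, since $T/\sqrt2$ is orthogonal). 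Hence it suffices to prove the estimate with $\Phi,\Phi^\infty$ replaced by $y,y^\infty$, absorbing $\kappa$ into the final constant $\mathscr C$.

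Next I would assemble the per-mode bounds into a single inequality uniform in $k\in\Z$ and $z\in\R$. For $k=0$ the component $y_0(z,t)-y_0^\infty$ lives, because of mass conservation, in the two-dimensional $\lambda_{+,0}(z)=\sigma(z)$-subspace of $\C^4$, on which Theorem \ref{th:EuclidDecay} applies and yields \eqref{eq:case0} with $\mathscr C_0(z)\le 12\max\{2,1+\|\partial_z\sigma\|_\infty^2\}$; using $e^{-2\sigma(z)t}\le e^{-\sigma(z)t}$ this already has the desired shape. For $k\neq 0$, \eqref{eq:modedecayNonDef} (if $\partial_z\sigma(z)=0$) and \eqref{eq:modedecayDef} (if $\partial_z\sigma(z)\neq0$) apply, and the decisive point --- established in Cases 1 and 2 above via the limits \eqref{eq:PkconvC1}, \eqref{eq:defectivePk} together with continuity of $\lambda^{P_k}_{\min},\lambda^{P_k}_{\max}$ in $(\sigma,\sigma_z)$ on the compact set $[\sigma_0,\sigma_1]\times[-\|\partial_z\sigma\|_\infty,\|\partial_z\sigma\|_\infty]$ --- is that $\mathscr C_k(z)$ is bounded by one constant independent of $k$ and $z$. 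Bounding $1\le 1+t^2$ in the non-defective case and taking the maximum of the finitely many types of uniform bounds, all of these collapse to
\begin{align*}
|y_k(z,t)-y_k^\infty|_2^2 \;\le\; \mathscr C_1\,(1+t^2)\,e^{-\sigma(z)t}\,|y_k(z,0)-y_k^\infty|_2^2,\qquad k\in\Z,\ z\in\R,\ t\ge0,
\end{align*}
with $\mathscr C_1$ independent of $k$, $z$, $t$.

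Finally I would sum over $k$ and invoke Parseval's identity on both sides, exactly as in the proof of Theorem \ref{th:sens1Heat},
\begin{align*}
\|y(\cdot,z,t)-y^\infty\|^2_{L^2(0,2\pi;\R^4)}
&=\frac{1}{2\pi}\sum_{k\in\Z}|y_k(z,t)-y_k^\infty|_2^2\\
&\le \mathscr C_1(1+t^2)e^{-\sigma(z)t}\,\|y(\cdot,z,0)-y^\infty\|^2_{L^2(0,2\pi;\R^4)},
\end{align*}
then use $e^{-\sigma(z)t}\le e^{-\sigma_0 t}$ (valid for $t\ge0$ since $\sigma(z)\ge\sigma_0$), take the supremum over $z\in\R$, convert back from $y$ to $\Phi$, and set $\mathscr C:=\kappa^2\mathscr C_1$. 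The only genuine obstacle in the whole argument is the uniform-in-$k$ control of $\mathscr C_k(z)$, and this is precisely where the standing hypotheses enter: $\sigma_0>0$ fixes the uniform rate, $\sigma_1<2$ keeps the discriminant $k^2-\sigma^2/4$ bounded away from $0$ so that the (generalized) eigenvectors stay bounded and linearly independent as functions of $\sigma$, and $\partial_z\sigma\in L^\infty$ keeps the defective contribution and the weight $\beta^2_{\pm,k}=\tfrac14(\partial_z\sigma(z))^2$ under control. Once this uniformity is in hand --- which was the substance of Cases 1--2 --- the passage to the PDE statement is routine.
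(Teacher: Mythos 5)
Your proposal is correct and follows essentially the same route as the paper: it invokes the already-established mode-wise estimates \eqref{eq:case0}, \eqref{eq:modedecayNonDef}, \eqref{eq:modedecayDef} with their uniform-in-$(k,z)$ constants, sums over $k$ via Parseval's identity, uses $e^{-\sigma(z)t}\leq e^{-\sigma_0 t}$, and converts between $\Phi$ and $y$ through the (up to scaling orthogonal) change of variables --- the paper simply records this last step as the exact identity $\|\Phi-\Phi^\infty\|^2=\tfrac12\|y-y^\infty\|^2$ rather than a two-sided equivalence. No gaps.
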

\begin{proof}
Applying Parseval's identity and the decay estimates \eqref{eq:case0}, \eqref{eq:modedecayNonDef} and \eqref{eq:modedecayDef}, where the constants $\mathscr{C}_k(z)$ are uniformly bounded in $z\in\R$, $k\in\Z$, one obtains
\begin{align*}
\|
\Phi(\cdot,z,t)-
\Phi^\infty \|^2_{L^2(0,2\pi;\R^4)}&= \frac12
\|y(\cdot,z,t)-y^\infty\|^2_{L^2(0,2\pi;\R^4)} \\
&= \frac{1}{4\pi}\sum_{k\in\Z} |y_k(z,t)-y_k^\infty |^2_2\\
&\leq \frac{1}{4\pi}\sum_{k\in\Z} 2\mathscr{C}_k(z) (1+t^2) e^{-\sigma(z) t} |y_k(z,0) - y_k^\infty |^2_2\\
&\leq
\mathscr{C} (1+t^2)e^{-\sigma_0 t} \|\Phi(\cdot,z,0)-\Phi^\infty \|^2_{L^2(0,2\pi;\R^4)}
\end{align*}
for all $t\geq 0$.	\qed
\end{proof}
\section{Fokker--Planck equations with uncertain coefficients}\label{sec:fpe}
%
%
In this section we consider the Fokker--Planck equation (FPE) with the spatial variable $x\in\R$,
\begin{align}\label{eq:FPE}
\partial_t f(x,z,t) &= \partial_x[\partial_x f(x,z,t) + a(z) x f(x,z,t) ] = :L(z) f(x,z,t),\quad t\geq 0,
\end{align}
with initial condition
\begin{align*}
f(x,z,0)&= f^0(x,z).
\end{align*}
The drift parameter $a(z)$ depends only on the uncertainty parameter $z\in\R$ and we assume $a \in C^1(\R)$ with $a_0:=\inf_{z\in\R}a(z)>0$. As in \S \ref{sec:hc}--\S\ref{sec:bgk}, we want to analyze the sensitivity of the decay for solutions to the steady state w.r.t.\ $z\in\R$. Contrary to the previous examples, \emph{the steady state here also depends on $z$.}

For each $z\in\R$ the unique normalized steady state of the equation, i.e. $L(z)f^\infty(x,z)=0$ with $\int_\R f^\infty (x,z) dx = 1$,  is given as
\begin{align*}
f^\infty(x,z) = \sqrt{\frac{a(z)}{2\pi}}e^{-\frac{x^2}{2}a(z)}.
\end{align*}
Denoting $g(x,z,t):= \partial_z f(x,z,t)$, the first order linear sensitivity equation is given as
\begin{align}\label{eq:sensFPE}
\partial_t g(x,z,t) = L(z) g(x,z,t) + a_z(z) [x \partial_x f(x,z,t) +  f(x,z,t)],\quad x\in\R,t\geq 0,
\end{align}
for each fixed $z\in\R$. The corresponding steady state is given as $g^\infty(x,z):= \partial_z f^\infty(x,z)$ which satisfies $\int_\R g^\infty(x,z)dx = 0$.

A direct approach to estimate the decay of $g$ via Duhamel's formula (cf. \S \ref{subsec:duhamel}) is not (easily) feasible: While a decay estimates with sharp rate for $f(x,z,t)$ is available, the decay behavior of the term $x \partial_x f(x,z,t)$ is not immediate. In \cite{JinZhu}, the Duhamel approach was taken to obtain decay estimates for nonlinear Vlasov--Fokker--Planck equations, but those estimates were not sharp.

We choose to expand $f(x,z,t)$ and $g(x,z,t)$ into  eigenfunctions of $L(z)$. This allows us to use a recursive relation of the eigenfunctions $h_k(x,z)$ and $x \partial_x h_k(x,z)$ for $k\in\N_0$.

\subsection{Eigenfunctions of the FP-operator $L(z)$}
The normalized eigenfunctions of $L(z)$ on the weighted space $L^2((f^\infty)^{-1})$ (with the inner product $\langle f, g\rangle_{L^2((f^\infty)^{-1})}=\int_\R f g (f^\infty)^{-1}dx$) are rescaled Hermite functions. The \emph{probabilists' Hermite polynomials} are defined as
\begin{align*}
H_k(x) := (-1)^k e^{\frac{x^2}{2}} \frac{d^k}{dx^k}e^{-\frac{x^2}{2}},\quad k\in \N_0,x\in\R,
\end{align*}
and satisfy the recursion
\begin{align}\label{eq:HermiteRel}
H_k'(x) = x H_k(x) - H_{k+1}(x) = k H_{k-1}(x),\quad k\in\N, x\in\R.
\end{align}
The \emph{Hermite functions} are given as
\begin{align}\label{eq:defhermite}
\tilde{h}_k(x) := \frac{1}{\sqrt{2\pi k !}}H_k(x) e^{-\frac{x^2}{2}},\quad k\in\N_0,x\in\R,
\end{align}
satisfying (due to \eqref{eq:HermiteRel}) $\partial_x \tilde{h}_k(x) = -\sqrt{k+1}\, \tilde{h}_{k+1}(x)$. We further denote the \emph{Hermite functions rescaled by $a(z)$} as
\begin{align}\label{eq:DefReHermite}
h_k(x,z) := \sqrt{a(z)}\, \tilde{h}_k( x \sqrt{a(z)}).
\end{align}
This rescaling is chosen such that the Hermite functions $h_k(z,\cdot)$ are normalized in $L^2((f^\infty)^{-1})$ for all $k\in\N_0$. Notice that $f^\infty(x,z)=h_0(x,z)$ and $g^\infty(x,z) = -\frac{\partial_z a(z)}{\sqrt{2}a(z)}h_2(x,z)$.

For later use we note that, due to \eqref{eq:HermiteRel}, the rescaled Hermite functions satisfy
\begin{align}\label{eq:Hermitex}
xh_k(x,z) = \frac{1}{\sqrt{a(z)}}[ \sqrt{k+1}\, h_{k+1}(x,z) + \sqrt{k}\,h_{k-1}(x,z)],
\end{align}
for $k\in\N, x,z\in\R$. Using \eqref{eq:HermiteRel} again, this implies
\begin{align}\label{eq:HermitexDer}
x \partial_x h_k(x,z) = -\sqrt{k+1} [\sqrt{k+2} \,h_{k+2}(x,z) + \sqrt{k+1}\,h_k(x)],
\end{align}
for $k\in\N_0, x,z\in\R$.

The spectrum of $L(z)$, with $z\in\R$ fixed, is given as
\begin{align*}
\sigma(L(z)) = \{-a(z)k \mid k\in\N_0\}.
\end{align*}
An orthonormal basis of eigenfunctions for the FP-operator $L(z)$ on $L^2((f^\infty)^{-1})$ is given by the rescaled Hermite functions defined in \eqref{eq:DefReHermite} (see e.g.\ \cite{Risken}, \S 5.5.1, \S 10.1.4), i.e. for $z\in\R$ fixed:
\begin{align*}
L^2((f^\infty)^{-1}) = \bigoplus_{k\in\N_0}\linspan\{ h_k(\cdot,z)\},\qquad L(z)h_k(\cdot,z) = -a(z)k\, h_k(\cdot,z).
\end{align*}

\subsection{Sharp decay estimate for the parameter sensitivity equations}\label{subsec:FPEdecay}
Let us assume $f^0(\cdot,z)$, $g^0(\cdot,z)\in L^2((f^\infty)^{-1})$ where $f(x,z,t)$ is a probability density, $\int_\R f^0(x,z)dx = 1$, and $g^0(x,z)$ does not carry any mass, i.e. $0=\int_\R g^0(x,z)dx =(g^0(\cdot,z),h_0(\cdot,z))_{L^2((f^\infty)^{-1})}$. The eigenfunction expansions for the corresponding solutions $f(x,z,t)$ of \eqref{eq:FPE} and $g(x,z,t)$ of \eqref{eq:sensFPE} are given as
\begin{align*}
f(x,z,t) = \sum_{k=0}^\infty f_k(z,t) h_k(x,z),\qquad g(x,z,t) = \sum_{k=1}^\infty g_k(z,t) h_k(x,z),
\end{align*}
for $x,z\in\R$ and $t\geq 0$. Due to \eqref{eq:FPE}, each eigenmode evolves as
\begin{align}\label{eq:FPEfmodeeq}
\partial_t f_k(z,t) = -a(z) k f_k(z,t),\quad k\in\N_0,
\end{align}
and hence $f_0(z,t)=1$.
Plugging the eigenfunction expansion for $g$ into \eqref{eq:sensFPE} leads to
\begin{align*}
\sum_{k=1}^\infty \partial_t g_k(z,t) h_k(x)&= - a(z) \sum_{k=1}^\infty k g_k(z,t) h_k(x,z)\\
&\qquad \qquad+ a_z(z) \sum_{k=0}^\infty f_k(z,t)\left[ h_k(x,z)+x \partial_x h_k(x,z)\right].
\end{align*}
Applying identity \eqref{eq:HermitexDer} gives
\begin{align*}
\sum_{k=1}^\infty &\partial_t g_k(z,t) h_k(x)\\
& =- a(z) \sum_{k=1}^\infty k g_k(z,t) h_k(x,z)+ a_z(z) \sum_{k=0}^\infty f_k(z,t) h_k(x,z) \\
& \qquad +a_z(z) \sum_{k=0}^\infty- (k+1) f_k(z,t) h_k(x,z) - \sqrt{(k+1)(k+2)} f_k(z,t) h_{k+2}(x,z)\\
&=-a(z) \sum_{k=1}^\infty k g_k(z,t) h_k(x,z) - a_z(z) f_1(z,t) h_1(x,z)\\
&\qquad  - a_z(z) \sum_{k=2}^\infty [k f_k(z,t) + \sqrt{k(k-1)} f_{k-2}(z,t) ] h_k(x,z).
\end{align*}
Separating the eigenmodes then yields
\begin{align*}
&\partial_t g_1(z,t) = -a(z) g_1(z,t) -a_z(z) f_1(z,t),\\
&\partial_t g_k(z,t) = -ka(z)  g_k(z,t)-a_z(z)\big[k f_k(z,t) + \sqrt{k(k-1)} f_{k-2}(z,t)\big], \quad k\geq 2.\numberthis \label{eq:FPEsensmodeeq}
\end{align*}
In contrast to $f$, the $k$th modes of $g$ do not decouple for $k\geq 1$. They are rather coupled as the pair $(f_1,g_1)$, respectively the triples $(f_{k-2},f_k, g_k)$ for $k\geq 2$. For $k=1$ the evolution equation for $f_1(z,t)$, $g_1(z,t)$ can be written as the ODE system
\begin{align}
\partial_t \underbrace{\begin{pmatrix}
	f_1\\
	g_1
	\end{pmatrix}}_{\udef{y_1(z,t):=}}=
-a(z)\underbrace{\begin{pmatrix}
	1&~~0~~\\
	\alpha(z)&~~ 1~~
	\end{pmatrix}}_{\udef{C_1(z):=}}
\begin{pmatrix}
f_1\\
g_1
\end{pmatrix},
\end{align}
for $z\in\R$, $t\geq 0$, with the notation $$\alpha(z):=\frac{a_z(z)}{a(z)}.$$ For $k=2$, equation \eqref{eq:FPEsensmodeeq} can be written as
\begin{align*}
\partial_t \wt{g}_2(z,t) = -2 a(z)[\wt{g}_2(z,t)+ \alpha(z)f_2(z,t)],
\end{align*}
with $\wt{g}_2(z,t):= g_2(z,t) + \frac{\alpha(z)}{\sqrt{2}}$, since $f_0(z,t)= \int_\R f(x,z,t)dx \equiv 1$. The corresponding system of equations is given as
\begin{align}
\partial_t \underbrace{\begin{pmatrix}
	f_2\\
	\wt{g}_2
	\end{pmatrix}}_{\udef{y_2(z,t):=}}=
-2a(z)\underbrace{\begin{pmatrix}
	1&~~0~~\\
	\alpha(z)&~~ 1~~
	\end{pmatrix}}_{\udef{C_2(z):=}}
\begin{pmatrix}
f_2\\
\wt{g}_2
\end{pmatrix},
\end{align}
for $z\in\R$, $t\geq 0$. Since the matrices $C_1(z)=C_2(z)$ are defective, if and only if $a_z(z)\neq 0$, we shall now distinguish these cases.
\subsubsection*{Case $k=1,2$ and $z\in\R$ such that $a_z(z)=0$:} The matrices $C_1(z)=C_2(z)$ are diagonal and the solutions of the eigenmodes $k=1,2$ are given explicitly as
\begin{align*}
y_k(z,t) &= e^{-ka(z) t} y_k(z,0),\quad t\geq 0, k=1,2.
\end{align*}
The decay estimate
\begin{align}\label{eq:FPEmodek12diag}
|y_k(z,t)|_2^2 \leq e^{-2ka(z)t}|y_k(z,0)|^2_2, \quad t\geq 0, k=1,2,
\end{align}
follows.
\subsubsection*{Case $k=1,2$ and $z\in\R$ such that $a_z(z)\neq 0$:}
The matrices $C_1(z)=C_2(z)$ are defective of order 1 and we apply Theorem \ref{th:EuclidDecay} to get a uniform-in-$z$ decay estimate. The construction of the matrices $P_1(z,t)=P_2(z,t)$ resembles Example \ref{ex:Def1toDef0} (with $\epsilon = \alpha(z)$ and rescaling $t\mapsto ka(z) t$). It yields the decay estimate
\begin{align}\label{eq:FPEmodek12}
|y_k(z,t)|^2_2 \leq \mathscr{C}_k(z)(1+k^2a(z)^2t^2) e^{-2k a(z) t}|y_k(z,0)|^2_2,\quad k=1,2,
\end{align}
with the uniform in $z\in\R$ bounded constant (for $k=1,2$)
\begin{align*}
\mathscr{C}_k(z)=12\cdot \max\{2, 1+ \alpha(z)^2\} \leq 12 \max\left\{2,1+\frac{\|a_z\|^2_\infty}{a_0^2}\right\}=:\mathscr{C}_{1,2}.\numberthis \label{eq:FPEuniC1}
\end{align*}
Notice that by definition of $y_2(z,t)$ the decay $y_2(z,t)\overset{t\to\infty}{\longrightarrow} 0$ implies $g_2(z,t)\overset{t\to\infty}{\longrightarrow} -\frac{\alpha(z)}{\sqrt{2}}=(g^\infty(\cdot,z),h_2(\cdot,z))_{L^2((f^\infty)^{-1})}$. This concludes the analysis for the modes $k=0,1,2$.

For our goal to get a decay estimate with sharp uniform-in-$z$ decay rate of the system \eqref{eq:FPE}--\eqref{eq:sensFPE} as formulated in Theorem \ref{th:sensFPE} below, it is important to get a ``precise'' decay estimate for the modes $k=1,3$. Only these two modes have the spectral gap $a(z)$ of the system of equations \eqref{eq:FPE}--\eqref{eq:sensFPE}. The other modes have larger spectral gaps and decay much faster. On the level of the modal equations for $k\geq 4$ all we need are ``sufficient'' decay estimates, namely rates at least as good as the ones of the modes $k=1,3$. This is in contrast to \S \ref{sec:bgk} where every Fourier mode has the spectral gap $\frac{\sigma(z)}{2}$ of the sensitivity equations and needs ``precise'' treatment.
\bigskip

For the modes $k\geq 3$, the equation for $g_k(z,t)$ corresponds to
\begin{align}\label{eq:fpemodek3}
\partial_t \underbrace{\begin{pmatrix}
f_{k-2}\\
f_k\\
g_k
\end{pmatrix}}_{\udef{y_k(z,t):=}}=
-ka(z)\underbrace{
	\begin{pmatrix}
	\frac{k-2}{k}&~~0&~~~~~0~~\\
	0&~~1&~~~~~0~~\\
	\gamma(k)\alpha(z)& ~~\alpha(z)&~~~~~ 1~~
	\end{pmatrix}}_{\udef{C_k(z):=}}
\begin{pmatrix}
f_{k-2}\\
f_k\\
g_k
\end{pmatrix},
\end{align}
for $z\in\R$, $t\geq 0$, denoting $$\gamma(k):= \sqrt{\frac{k-1}{k}}\in [\textstyle\sqrt{\frac{2}{3}},1).$$
For each $k\geq 3$, the eigenvalues of $C_k(z)$ are $\lambda_{1,k}=\frac{k-2}{k}$ and $\lambda_{2,k}=1$, where $\lambda_{2,k}$ is defective of order 1, if and only if $a_z(z) \neq 0$.  The (non-defective) spectral gap of $C_k(z)$ is given as
$$\mu_k = \frac{k-2}{k}, \quad k\geq 3.$$
\subsubsection{Case $k= 3$ and $z\in\R$ such that $a_z(z) =0$:}
The matrix $C_3^H(z)$ is diagonal and the solutions for the eigenmodes are given explicitly as
\begin{align*}
f_1(z,t) &= e^{-a(z) t} f_1(z,0),\\
f_3(z,t) &= e^{-3a(z) t} f_3(z,0),\qquad g_3(z,t) = e^{-3a(z) t} g_3(z,0),\quad t\geq 0.
\end{align*}
The decay estimate
\begin{align}\label{eq:FPEmodediag}
|y_3(z,t)|_2^2 \leq e^{-2a(z)t}|y_3(z,0)|^2_2, \quad t\geq 0
\end{align}
follows.
\subsubsection{Case $k= 3$ and $z\in\R$ such that $a_z(z) \neq 0$:}
In this case the eigenvalue $\lambda_{2,3}(z) =1$ is defective, but this eigenvalue does not correspond to the spectral gap $\mu_3=\frac{1}{3}$.

The matrix $C_3(z)$ corresponds to two Jordan blocks. In notation from \S \ref{sec:ODE} this means: $N=2$, $l_1=1$, $l_2=2$ and $M=1$. We use Theorem \ref{th:EuclidDecay} with the modification of Remark \ref{rem:Ptime} for $n_2=2$. The (generalized) eigenvectors of $C_3^H(z)$ are given as
\begin{align*}
&v_{1,3}^{(0)} =(1, 0 ,0)^T,\\
&v_{2,3}^{(0)}(z) = (0,~ \alpha(z),~ 0)^T,&& \quad v_{2,3}^{(1)}(z) = (\textstyle\sqrt{\frac{3}{2}}\alpha(z),\, 0,\, 1)^T.
\end{align*}
With the modifications described in Remark \ref{rem:Ptime}, the matrix $\wt{P}_3(z,t)$ is constructed with three arbitrary weights: We choose them as $\beta_{1,3}^1(z) = 1$, $\beta_{2,3}^1(z)=\alpha(z)^{-2} $ and $\beta_{2,3}^2(z) = 1$, which leads to
\begin{align*}
\wt{P}_3(z,0) =
\begin{pmatrix}
1+\frac{3}{2} \alpha(z)^2  ~~~~ & 0 ~~~~& \textstyle\sqrt{\frac{3}{2}}\alpha(z)\\
0 & 1~~~~ & 0 \\
\textstyle\sqrt{\frac{3}{2}}\alpha(z) & 0~~~~ & 1 \\
\end{pmatrix}.
\end{align*}

Then, Remark \ref{rem:Ptime} (with the rescaling $t\mapsto  3a(z)t$) leads to the decay estimate
\begin{align*}
|y_3(z,t)|^2_2 \leq
\wt{\mathscr{C}}_3(z) e^{-2 a(z) t}|y_3(z,0)|^2_2,\quad t\geq 0,
\end{align*}
with the constant
\begin{align*}
\wt{\mathscr{C}}_3(z) &= \frac{\lambda_{\max}^{\wt{P}_3(z,0)}}{\lambda_{\min}^{\wt{P}_3(z,0)}}12\cdot \max\{2,1+\alpha(z)^2\}.\numberthis \label{eq:FPEuniC2}
\end{align*}

Denoting $\delta(z):=1+\frac34 \alpha(z)$, the eigenvalues of $\wt{P}_3(z,0)$ are given as
\begin{align*}
&\lambda^{\wt{P}_3(z,0)}_{1,2} = \delta(z) \pm \sqrt{\delta(z)^2-1},\qquad \lambda^{\wt{P}_3(z,0)}_{3} = 1,
\end{align*}
with $\lambda_{\max}^{\wt{P}_3(z,0)}=\lambda^{\wt{P}_3(z,0)}_{1}$ and $\lambda_{\min}^{\wt{P}_3(z,0)}=\lambda^{\wt{P}_3(z,0)}_{2}$. Since
\begin{align*}
\frac{\lambda_{\max}^{\wt{P}_3(z,0)}}{\lambda_{\min}^{\wt{P}_3(z,0)}}&=2\delta(z)^2 -1+2 \delta(z)\sqrt{\delta(z)^2-1}\\
&\leq 4 \delta(z)^2-1=3 + \frac94 \alpha(z)^4 + 6 \alpha(z)^2,
\end{align*}
the constant is uniformly bounded in $z$ by
\begin{align*}
\wt{\mathscr{C}}_3(z)\leq (6+ \frac{21}{4}\frac{\|a_z\|_\infty^4}{a_0^4})12\cdot\max\{2,1+\frac{\|a_z\|_\infty^2}{a_0^2}\}=:\mathscr{C}_3.
\end{align*}
We arrive at
\begin{align}\label{eq:FPEmode3}
|y_3(z,t)|^2_2 \leq
\mathscr{C}_3 e^{-2 a(z) t}|y_3(z,0)|^2_2
\end{align}
for $t\geq 0$.

\subsubsection{Case $k\geq 4$ and $z\in\R$:}
In this case the equations for the $k$th mode \eqref{eq:fpemodek3} does not correspond to the spectral gap $a(z)$ of the system \eqref{eq:FPE}--\eqref{eq:sensFPE}. In fact, the exponential decay rate of $|y_k(z,t)|^2_2$, $k\geq 4$ is at least $4a(z)$, which is double the rate of the slowest modes $k=1,3$. Thus, there is more freedom of choice for the matrix $P_k(z)$ for $k\geq 4$. This is important in order to get a uniform in $z$ and  $k$ estimate for $k\geq 4$. The additional difficulty compared to $k=3$ is the uniform bound for $k\to \infty$. Indeed, even Remark \ref{rem:Ptime} would not give a matrix $\wt{P}_k(z,0)$ with uniform condition number for fixed $z$ and $k\to \infty$, and therefore a decay estimate constant $\wt{\mathscr{C}}_k(z)$ that is unbounded in $k$.

The following lemma builds on the fact that the Euclidean norm, i.e.\ using $\wt{P}=I$, yields already a ``sufficient'' decay estimate as long as $|\alpha(z)|$ is small enough. An appropriate rescaling of the third coordinate via a modified norm does the trick for all $z\in\R$.

\begin{lemma}\label{lem:k4}
	For $k\geq 4$ and $z\in\R$ solutions to \eqref{eq:fpemodek3} satisfy the decay estimate
	\begin{align}\label{eq:lemk4}
	|y_k(z,t)|_2^2 \leq \mathscr{C}_{\geq4} e^{-2a(z)t}|y_k(z,0)|^2_2, \quad t\geq 0,
	\end{align}
	with the constant $\mathscr{C}_{\geq 4}:= 2 (1+\frac{\|\partial_z a\|_\infty^4}{a_0^4})$.
	\end{lemma}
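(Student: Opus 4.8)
\textit{Proof plan.}

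As the remark preceding the lemma notes, neither Theorem~\ref{th:EuclidDecay} nor the modification of Remark~\ref{rem:Ptime} helps here, because the generalized–eigenvector norm they produce has condition number blowing up as $k\to\infty$. I would therefore give a self-contained Lyapunov argument with a very simple \emph{diagonal} modified norm that only rescales the third coordinate $g_k$ of $y_k=(f_{k-2},f_k,g_k)^T$ --- the coordinate into which the coupling $\alpha(z):=a_z(z)/a(z)$ enters. The key structural fact is that the defective eigenvalue $\lambda_{2,k}=1$ of $C_k(z)$ is \emph{not} the spectral gap $\mu_k=\frac{k-2}{k}\ge\frac12$, so no time dependence is needed and there is a wide margin over the target rate. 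Concretely, it suffices to exhibit, for each $k\ge4$ and $z\in\R$, a constant real symmetric positive definite $P_k$ with $C_k(z)^TP_k+P_kC_k(z)\ge\frac2k P_k$: since $y_k(z,\cdot)$ is real (so $C_k^H=C_k^T$) and $\partial_t y_k=-ka(z)C_k(z)y_k$, one gets $\frac{d}{dt}|y_k(z,t)|_{P_k}^2=-ka(z)\,y_k^T(C_k^TP_k+P_kC_k)y_k\le-2a(z)\,|y_k(z,t)|_{P_k}^2$, and Gronwall together with $\lambda_{\min}^{P_k}I\le P_k\le\lambda_{\max}^{P_k}I$ yields $|y_k(z,t)|_2^2\le(\lambda_{\min}^{P_k})^{-1}\lambda_{\max}^{P_k}\,e^{-2a(z)t}|y_k(z,0)|_2^2$. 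Everything thus reduces to producing such a $P_k$ with condition number at most $\mathscr C_{\ge4}$, uniformly in $k$ and $z$.

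I would split on the size of $|\alpha(z)|$. For $P_k=\diag(1,1,\rho^2)$ one computes
\begin{align*}
C_k(z)^TP_k+P_kC_k(z)-\tfrac2k P_k=
\begin{pmatrix}
a_1 & 0 & \rho^2\gamma(k)\alpha\\
0 & a_2 & \rho^2\alpha\\
\rho^2\gamma(k)\alpha & \rho^2\alpha & a_2\rho^2
\end{pmatrix},\qquad a_1:=\tfrac{2k-6}{k},\quad a_2:=\tfrac{2k-2}{k},
\end{align*}
whose top–left $2\times2$ block $\diag(a_1,a_2)$ is positive definite for $k\ge4$; by the Schur complement the matrix is positive semidefinite exactly when $\rho^2\alpha^2\le a_1a_2^2/(a_1+a_2\gamma(k)^2)$. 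Since $\gamma(k)^2=\frac{k-1}{k}<1$ and $(a_1,a_2)\mapsto a_1a_2^2/(a_1+a_2)$ is increasing in each argument, while $a_1\ge\frac12$ and $a_2\ge\frac32$ for $k\ge4$, one obtains $a_1a_2^2/(a_1+a_2\gamma(k)^2)\ge a_1a_2^2/(a_1+a_2)\ge\frac9{16}$ for all $k\ge4$. Hence, if $\alpha(z)^2\le\frac9{16}$ take $P_k=I$ (the inequality holds with $\rho=1$), giving per-mode constant $1$; if $\alpha(z)^2>\frac9{16}$ take $\rho^2:=\frac9{16\,\alpha(z)^2}\in(0,1)$, so the inequality holds and the condition number of $P_k$ is $\rho^{-2}=\frac{16}{9}\alpha(z)^2$. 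Because the quadratic $2u^2-\frac{16}{9}u+2$ has negative discriminant, $\frac{16}{9}\alpha(z)^2\le2(1+\alpha(z)^4)$.

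Combining the two cases, for every $k\ge4$ and $z\in\R$ solutions of \eqref{eq:fpemodek3} satisfy $|y_k(z,t)|_2^2\le2(1+\alpha(z)^4)\,e^{-2a(z)t}|y_k(z,0)|_2^2$; since $\alpha(z)^4=a_z(z)^4/a(z)^4\le\|\partial_z a\|_\infty^4/a_0^4$ this is exactly \eqref{eq:lemk4}. The one genuinely delicate point --- where I expect the main obstacle to lie --- is the uniformity in $k$ as $k\to\infty$: one must check that the Schur complement of the rescaled matrix stays bounded away from degeneracy, i.e.\ that $a_1a_2^2/(a_1+a_2\gamma(k)^2)$ has a positive lower bound valid for all $k\ge4$. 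It is precisely this fact that allows a single $k$-independent threshold for the case split and a single $k$-independent rescaling $\rho(z)$, and it is precisely where the generalized-eigenvector norm of Remark~\ref{rem:Ptime} fails (its condition number is not bounded as $k\to\infty$). The remaining verifications --- positive stability, the Schur-complement positivity, the monotonicity of $a_1a_2^2/(a_1+a_2)$, and the elementary inequality $\frac{16}{9}u\le2(1+u^2)$ --- are routine.
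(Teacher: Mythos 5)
Your proof is correct and follows essentially the same route as the paper's: a constant (time-independent) diagonal Lyapunov matrix that rescales only the $g_k$-coordinate, a matrix inequality guaranteeing decay rate at least $2a(z)$ uniformly in $k\geq 4$ via monotonicity in $k$, and a case split on the size of $|\alpha(z)|$. The only differences are cosmetic: the paper takes the third diagonal entry $\tfrac12\min\{1,\alpha(z)^{-4}\}$ and verifies positivity through leading principal minors with the split $|\alpha(z)|\gtrless 1$, whereas you scale by $\min\{1,\tfrac{9}{16}\alpha(z)^{-2}\}$ and use a Schur complement with the split $\alpha(z)^2\gtrless\tfrac{9}{16}$; both arguments land on the same constant $\mathscr{C}_{\geq 4}$.
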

The elementary but technical proof is deferred to Appendix \ref{pr:k4}.

Combining the above five cases for $k\in\N_0$ and $z\in\R$ leads to the desired uniform-in-$z$ decay estimate for arbitrary initial conditions on $L^2((f^\infty)^{-1})\times L^2((f^\infty)^{-1})$ with sharp rate:
\begin{theorem}\label{th:sensFPE}
	Let $a\in C^1(\R)$ where $a_0:=\inf_{z\in\R}a(z)>0$ and $\partial_z a\in L^\infty(\R)$. Then, there exists a constant $\mathscr{C}>0$, such that  normalized solutions $\Phi(x,z,t)=(f,g)^T$ of the system \eqref{eq:FPE}--\eqref{eq:sensFPE} with steady state $\Phi^\infty(x,z) := (f^\infty,g^\infty)^T$ satisfy
	\begin{align*}
	\sup_{z\in\R}\|\Phi(\cdot,z,t)-&\Phi^{\infty}(\cdot,z)\|^2_{L^2((f^{\infty})^{-1})}\\
	& \leq \mathscr{C}(1+t^2)e^{-2a_0t} \sup_{z\in\R}\|\Phi(\cdot,z,0)-\Phi^\infty(\cdot,z) \|^2_{L^2((f^{\infty})^{-1})}
	\end{align*}
	for $t\geq 0$ with an explicit constant $\mathscr{C}>0$ only depending on $a_0$ and $\|\partial_z a\|_\infty$, as given in \eqref{eq:kuniformconst} below.
\end{theorem}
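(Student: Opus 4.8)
The plan is to reduce the PDE estimate to the five modal estimates \eqref{eq:FPEmodek12diag}--\eqref{eq:lemk4} already established above, via the orthonormal expansion in the rescaled Hermite functions \eqref{eq:DefReHermite}. First, for each fixed $z\in\R$ the family $\{h_k(\cdot,z)\}_{k\in\N_0}$ is an orthonormal basis of $L^2((f^\infty)^{-1})$, and since $f_0(z,t)\equiv 1=f^\infty_0(z)$, $g^\infty_k(z)=0$ for $k\neq 2$ while $g^\infty_2(z)=-\alpha(z)/\sqrt 2$, Parseval's identity gives
\begin{align*}
\|\Phi(\cdot,z,t)-\Phi^\infty(\cdot,z)\|^2_{L^2((f^{\infty})^{-1})}
= \sum_{k\ge 1}|f_k(z,t)|^2 + |g_1(z,t)|^2 + |\wt{g}_2(z,t)|^2 + \sum_{k\ge 3}|g_k(z,t)|^2 .
\end{align*}
Grouping the modes into the vectors $y_1=(f_1,g_1)^T$, $y_2=(f_2,\wt g_2)^T$ and $y_k=(f_{k-2},f_k,g_k)^T$ for $k\ge 3$ (these are exactly the ODE unknowns in \eqref{eq:FPEfmodeeq}--\eqref{eq:fpemodek3}), and using $|f_k|^2+|g_k|^2\le|y_k|^2_2$ for $k\ge 3$, the right-hand side is bounded by $|y_1(z,t)|^2_2 + |y_2(z,t)|^2_2 + \sum_{k\ge 3}|y_k(z,t)|^2_2$.

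Second, I insert the modal decay bounds for the individual $y_k$: for $k=1,2$ the bound \eqref{eq:FPEmodek12diag} (when $a_z(z)=0$) or \eqref{eq:FPEmodek12} (when $a_z(z)\neq 0$), in either case dominated by $\mathscr{C}_{1,2}(1+k^2a(z)^2t^2)e^{-2ka(z)t}|y_k(z,0)|^2_2$ with $\mathscr{C}_{1,2}$ from \eqref{eq:FPEuniC1}; for $k=3$ the bound \eqref{eq:FPEmodediag} or \eqref{eq:FPEmode3}, dominated by $\mathscr{C}_3 e^{-2a(z)t}|y_3(z,0)|^2_2$; and for $k\ge 4$ Lemma~\ref{lem:k4}, giving $\mathscr{C}_{\ge 4}e^{-2a(z)t}|y_k(z,0)|^2_2$. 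All of $\mathscr{C}_{1,2},\mathscr{C}_3,\mathscr{C}_{\ge 4}$ depend only on $a_0$ and $\|\partial_z a\|_\infty$. It remains to absorb the $z$-dependence in the exponents and polynomial prefactors into the common rate $a_0$ and the universal factor $1+t^2$. Since $a(z)\ge a_0>0$, and since the map $s\mapsto(1+cs^2t^2)e^{-2cst}$ is non-increasing on $[0,\infty)$ for every fixed $c>0$, $t\ge 0$ (its $s$-derivative has the sign of $-(c^2s^2t^2-cst+1)<0$), each prefactor is controlled by its value at $s=a_0$; combined with $e^{-2cst}\le e^{-2a_0t}$ for $c\ge 1$ and with $(1+ca_0^2t^2)\le\max\{1,ca_0^2\}(1+t^2)$ this yields constants depending only on $a_0$ such that $(1+k^2a(z)^2t^2)e^{-2ka(z)t}\le c_k(1+t^2)e^{-2a_0 t}$ for $k=1,2$, while the $k\ge 3$ bounds are trivially $\le e^{-2a_0 t}\le(1+t^2)e^{-2a_0 t}$.

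Third, I sum over $k$ at $t=0$. Counting multiplicities, each $|f_j(z,0)|^2$ ($j\ge 1$) appears in exactly two of the grouped vectors, namely in $y_j$ and in $y_{j+2}$, whereas each of $|g_1(z,0)|^2$, $|\wt g_2(z,0)|^2$, $|g_j(z,0)|^2$ ($j\ge 3$) appears once; hence
$|y_1(z,0)|^2_2+|y_2(z,0)|^2_2+\sum_{k\ge 3}|y_k(z,0)|^2_2\le 2\,\|\Phi(\cdot,z,0)-\Phi^\infty(\cdot,z)\|^2_{L^2((f^{\infty})^{-1})}$.
Collecting the modal constants $\mathscr{C}_{1,2},\mathscr{C}_3,\mathscr{C}_{\ge 4}$, the calculus constants $c_1,c_2$ and the factor $2$ into one constant $\mathscr{C}$ (this is the explicit constant \eqref{eq:kuniformconst}), and then taking the supremum over $z\in\R$ on both sides, gives the claimed estimate.

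The substantial work — the four families of modal estimates, in particular the uniform-in-$z$ control of the condition numbers of the generalized-eigenvector matrices $P_k(z,0)$ near the various non-defective limits (Cases $k=1,2,3$ of \S\ref{subsec:FPEdecay} and Remark~\ref{rem:Ptime}) and the uniform-in-$k$ bound of Lemma~\ref{lem:k4} — is already done, so here the only genuinely new ingredients are the bookkeeping that reassembles the modal bounds without losing uniformity and the elementary monotonicity lemma that trades the mode-dependent rate $ka(z)$ for the common rate $a_0$ at the price of the factor $1+t^2$. I expect that monotonicity step to be the one place needing a little care, precisely because $a$ is only assumed bounded below (not above), so the prefactor must be estimated from the decay of $s\mapsto(1+cs^2t^2)e^{-2cst}$ on $[a_0,\infty)$ rather than from a crude $a(z)\le\|a\|_\infty$ bound.
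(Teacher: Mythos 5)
Your proposal is correct and follows essentially the same route as the paper's proof: Parseval's identity, grouping the Hermite modes into the vectors $y_k$, inserting the modal bounds \eqref{eq:FPEmodek12diag}--\eqref{eq:lemk4}, trading the rates $ka(z)$ for $a_0$ via the monotonicity of $s\mapsto(1+s^2t^2)e^{-2st}$, and absorbing the double-counting of the $f_j$-modes into the factor $2$ of \eqref{eq:kuniformconst}. (Only a cosmetic slip: your monotone map should read $(1+c^2s^2t^2)e^{-2cst}$, consistent with the derivative you actually compute.)
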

\begin{proof}
With Parseval's identity and the collected decay estimates \eqref{eq:FPEmodek12diag}, \eqref{eq:FPEmodek12}, \eqref{eq:FPEmodediag}, \eqref{eq:FPEmode3} and \eqref{eq:lemk4}, we obtain
	\begin{align*}
	\|\Phi(\cdot,z,t)-&\Phi^\infty(\cdot,z)\|^2_{L^2((f^{\infty})^{-1})} \\
	&= \sum_{k=1}^\infty |f_k(z,t)|^2 + |g_1(z,t)|^2 + \big|g_2(z,t)+\frac{\alpha(z)}{\sqrt{2}}\big|^2 + \sum_{k=3}^\infty |g_k(z,t)|^2 \\
	&\leq\sum_{k=1}^\infty |y_k(z,t)|^2_2\\
	&\leq \mathscr{C}_{1,2}\sum_{k=1}^2 (1 +k^2a(z)^2 t^2 )e^{-2k a(z) t}|y_k(z,0)|^2_2 + \mathscr{C}_3e^{-2 a(z) t}|y_3(z,0)|^2_2\\
	&  \qquad  +  \mathscr{C}_{\geq 4}\sum_{k=4}^\infty e^{-2 a(z) t}|y_k(z,0)|^2_2\\
	&\leq (\mathscr{C}_{1,2}+\mathscr{C}_3+\mathscr{C}_{\geq 4}) (1 + a(z)^2t^2) e^{-2a(z) t }\left(\sup_{z\in\R}\,\sum_{k=1}^\infty |y_k(z,0)|_2^2\right).
	\end{align*}
Using the fact that $(1+a(z)^2 t^2)e^{-2a(z)t}\leq (1+a_0^2t^2)e^{-2a_0 t}$ for $t\geq 0$, $z\in\R$ leads to
	\begin{align*}
		\|\Phi(\cdot,z,t)-&\Phi^\infty(\cdot,z)\|^2_{L^2((f^{\infty})^{-1})} \\
	&\leq  \mathscr{C} (1 + t^2) e^{-2a_0 t} \sup_{z\in\R}\|\Phi(\cdot,z,0)- \Phi^\infty(\cdot,z) \|^2_{L^2((f^{\infty})^{-1})}
  	\end{align*}
	for each fixed $z\in\R$, $t\geq 0$ with the constant
	\begin{align*}
	&\mathscr{C}: = 2\max\{1,a_0^2\}(\mathscr{C}_{1,2}+\mathscr{C}_3+\mathscr{C}_{\geq 4})\\
	&=2\max\{1,a_0^2\}\left[12 \cdot \max\{2,1+\frac{\|a_z\|^2_\infty}{a_0^2}\}(7+ \frac{21}{4}\frac{\|a_z\|_\infty^4}{a_0^4})+2(1+\frac{\|a_z\|_\infty^4}{a_0^4})\right]<\infty.\numberthis \label{eq:kuniformconst}
	\end{align*}	\qed	
\end{proof}

\begin{remark}
Similar to \S \ref{subsec:duhamel}, Duhamel's formula would also yield a decay estimate here. The eigenfunction modes of $f(x,z,t)$ are given explicitly due to \eqref{eq:FPEfmodeeq}. This allows us to use Duhamel's formula for the evolution equation \eqref{eq:FPEsensmodeeq}, providing the eigenfunction modes of $g_k(z,t)$ in explicit form.
\end{remark}
\subsection{Uncertain diffusion coefficient}\label{subsec:FPEdiff}
As an alternative to \eqref{eq:FPE} we consider now the following Fokker--Planck equation on $\R$ with uncertainty in the diffusion term:
\begin{align}\label{eq:FPEdiff}
\partial_t u(x,z,t) =  \partial_x(d(z)u_x(x,z,t) + x u(x,z,t) ) = :L_1(z) u(x,z,t),
\end{align}
for $x,z\in\R$, $t\geq 0$ and a diffusion coefficient $d\in C^1(\R)$ satisfying $ d_0:=\inf_{z\in\R}d(z)>0$. For $v(x,z,t):=\partial_z u(x,z,t)$ the first order linear sensitivity equation is given as
\begin{align}\label{eq:FPEdiffsens}
\partial_t v(x,z,t) = L_1(z) v(x,z,t) + d_z(z) u_{xx}(x,z,t),
\end{align}
for $ x,z\in\R, t\geq 0$. A strategy similar to the one used in \S \ref{subsec:FPEdecay} can also be applied here: The rescaled Hermite functions
\begin{align*}
\hat{h}_k(x,z):= d(z)^{-\frac12} \tilde{h}_k(x d(z)^{-\frac12})
\end{align*}
with $\tilde{h}_k(x,z)$ defined in \eqref{eq:defhermite} are an orthonormal basis of $L^2((\hat{h}_0)^{-1})$ of eigenfunctions of $L_1(z)$, i.e.
\begin{align*}
L_1(z)\hat{h}_k(\cdot,z) = -k\, \hat{h}_k(\cdot,z),\quad k\in\N_0,
\end{align*}
which determines the whole ($z$-independent) spectrum
\begin{align*}
\sigma(L_1(z))=-\N_0.
\end{align*}
It follows that the unique normalized steady state of $L_1(z)$ and the corresponding steady state for \eqref{eq:FPEdiffsens} are given as
\begin{align*}
u^\infty(x,z)&=\hat{h}_0(x,z) = \frac{1}{\sqrt{2\pi d(z)}} e^{-\frac{x^2}{2d(z)}}, \\
\quad v^\infty(x,z)& = \partial_z u^\infty(x,z) = \frac{d_z(z)}{\sqrt{2}d(z)} \hat{h}_2(x,z),
\end{align*}
respectively. An eigenfunction expansion leads to the non-defective ODE systems for the eigenfunction modes $k\geq 2$:
\begin{align*}
\partial_t \begin{pmatrix}
u_{k-2}\\
v_{k}
\end{pmatrix}
= -
\underbrace{\begin{pmatrix}
k-2& ~~~~0\\
\frac{d_z(z)}{d(z)} \sqrt{(k-1)k}& ~~~~k
\end{pmatrix}}_{\udef{A_k(z):=}}
\begin{pmatrix}
u_{k-2}\\
v_{k}
\end{pmatrix},\quad z\in\R, t\geq 0,
\end{align*}
and $\partial_t v_0(z,t) = 0$, $\partial_t v_1(z,t) = -v_1(z,t)$. 
The matrix $A_k(z)$, $k\geq 2$, has the eigenvalues $\lambda_{1,k} = k-2$ and $\lambda_{2,k}= k$, and hence, is not defective. Contrary to the models previously investigated, the FPE with added uncertainty in the diffusion term \emph{does not} result in the typical defective decay behavior for $v(x,z,t)$. The decay behavior of solutions $\phi(x,z,t):=(u,v)^T$ of the system \eqref{eq:FPEdiff}--\eqref{eq:FPEdiffsens} can hence be estimated easily by
\begin{align*}
\sup_{z\in\R}\|\phi(\cdot,z,t)-\phi^\infty(\cdot,z)\|^2_{L^2((u^\infty)^{-1})} \leq \mathscr{C}e^{-t} \sup_{z\in\R} \|\phi(z,0)-\phi^\infty(\cdot,z)\|^2_{L^2((u^\infty)^{-1})},
\end{align*}
with $\phi^\infty := (u^\infty,v^\infty)^T$, a constant $\mathscr{C}>0$ and $ t\geq 0$.

To sum up, we observe that the FPE \eqref{eq:FPE} with uncertainty in the drift term gives rise to a more complicated and interesting decay behavior.

\section{Conclusion}
	In this paper we perform a sensitivity analysis for several linear PDEs with uncertainty by a Lyapunov functional method, obtaining sharp decay rates to the global equilibrium.
	
	First, a systematic derivation of Lyapunov functionals -- in the form of modified norms -- for arbitrary linear ODE systems is given. The Lyapunov functional approach has a simple geometric interpretation: In the deformed metric, the angle between any trajectory and the level curves of the $P$-norm is uniformly bounded away from zero (for $P$ constant in $t$). The novelty here is the inclusion of defective ODEs, which demand time-dependence in the norms $|\cdot |_{P(t)}$ in order to obtain sharp decay estimates of order $(1+t^M)e^{-\mu t}$. This approach is realized via a matrix $P(t)$, which is constructed from the explicit (generalized) eigenvectors of the system matrix accompanied by arbitrary weights. In the presence of an uncertainty parameter $z$, we obtain decay estimates that are uniform in $z$, which includes non-defective limits. In such cases, the matrix $P(z,t)$ has to be constructed more carefully, taking advantage of the non-uniqueness of $P(z,t)$ in the above method.

	This method is applied to three PDEs, a convection-diffusion equation, a two-velocity BGK equation, and a Fokker-Planck equation, where each of these equations feature uncertainty in the equation parameters. A linear sensitivity analysis is performed, where for the convection-diffusion equation a second order sensitivity is also included. The analysis works well with PDEs that allow for a Fourier mode decomposition, since each mode evolves according to an ODE. Hence, the decay estimates have to be uniform in the eigenmodes $k$. In the presented examples (with the exception of \S \ref{subsec:FPEdiff}) defects appear in the resulting ODEs.
	
	Sharp decay estimates which are uniform in the uncertainty parameter $z$ were obtained for these PDEs. The technical difficulty here is the possible appearance of non-defective limits due to the $z$-dependence of the ODEs, when one considers derivatives of solutions with respect to $z$. This problem is solved with a careful choice of the matrix $P(z,t)$, exploiting the fact that its construction (and in particular its weights) is not unique. 
	
	Let us point out an aspect of the sensitivity analysis of the Fokker--Planck equation in \S \ref{sec:fpe} distinct from the other investigated PDEs: The equilibrium that solutions converge to, depends itself on the uncertainty parameter.
	
	The method could be helpful even for nonlinear PDEs with uncertainties. For those problems, usually perturbative solutions, namely solutions near global equilibria, are studied, in which the exponential decay due to the linear(-ized) hypocoercivity dominates the nonlinear growth. See e.g.\ \cite{MN, Villani-book, MB, Guo-NS, Achleitner2018, Achleitner2016} for deterministic settings and \cite{LiuJin, JinZhu, ShuJin} for inclusion of uncertainty quantification. One expects that our analysis can lead to sharper decay rates than previously used energy estimates in Sobolev spaces.
 \appendix
 \section{Appendix}\label{appendix}
 \subsection{Proof of Lemma \ref{lem:PtP0est}} \label{pr:PtP0est}
 \begin{proof}
 For arbitrary $i,j\in\{1,\ldots,m\}$ and $\alpha>0$
 	\begin{align}\label{eq:vivj}
 	v^i\otimes v^j + v^j\otimes v^i \geq -\frac{1}{\alpha} Q^i - \alpha Q^j
 	\end{align}
 	holds true, as can be directly validated in matrix representation.
 	
 	We estimate $\hat{w}^m_n(t)\otimes \hat{w}^m_n(t)$ from below by using inequality \eqref{eq:vivj} for the double sum:
 	\begin{align*}
 	\hat{w}^m_n(t) &\otimes \hat{w}^m_n(t) = \sum_{k=1}^m (\xi^k(t))^2 Q^k + \sum_{\substack{i,j\in\{1,\ldots,m\}\\ i\neq j}} \xi^i(t) \xi^j(t) v^i\otimes v^j\\
 	&\geq \sum^m_{k=1} (\xi^k(t))^2 Q^k -\sum_{\substack{k,l\in\{1,\ldots,m\}\\l<k}} \left (\frac{1}{\alpha} (\xi^k(t))^2 Q^k + \alpha (\xi^l(t))^2 Q^l \right).
 	\end{align*}
 	We reorder the double sum and notice that each term depends on only one of the two indices:
 	\begin{align*}
 	&\hat{w}^m_n(t)\otimes \hat{w}^m_n(t)\\
 	&\quad \geq \sum^m_{k=1} (\xi^k(t))^2 Q^k - \sum^m_{k=2} \sum^{k-1}_{l=1}\left( \frac{1}{\alpha} (\xi^k(t))^2 Q^k + \alpha (\xi^l(t))^2 Q^l \right)\\
 	&\quad = \sum^m_{k=1} (\xi^k(t))^2 Q^k - \sum_{k=2}^m \frac{k-1}{\alpha} (\xi^k(t))^2 Q^k - \alpha \sum_{l=1}^{m-1} (m-l)(\xi^l(t))^2 Q^l\\
 	&\quad=(1-\frac{m-1}{\alpha})(\xi^m)^2 Q^m  + \sum^{m-1}_{k=1} \left(1 - \frac{k-1}{\alpha}-\alpha(m-k)\right) (\xi^k(t))^2 Q^k.
 	\end{align*}
 	For the first coefficient to be positive, we need $\alpha> m-1$. Moreover one has $\min_{k=1,\ldots,m-1} \{1 - \frac{k-1}{\alpha}-\alpha(m-k)\}= 1-\alpha(m-1)$ and therefore
 	\begin{align*}
 	\hat{w}^m_n(t) \otimes \hat{w}^m_n(t)&\geq (1-\frac{m-1}{\alpha} )(\xi^m)^2 Q^m - (\alpha(m-1)-1) \sum_{k=1}^{m-1} (\xi^k(t))^2 Q^k,
 	\end{align*}
 	which yields the desired result for $\theta:=\frac{m-1}{\alpha}\in(0,1)$.\qed
 \end{proof}
\subsection{Proof of Theorem \ref{th:EuclidDecay}}\label{pr:EuclidDecay}
If $M=1$ there are no defective eigenvalues with real part $\mu$. In this case, the result follows from \eqref{eq:Pconstantdecay} and the estimates \eqref{eq:normequiv} with the corresponding matrix $P$.

For $M>1$, we fix an arbitrary $n\in I_\mu$ and first estimate the $P_n^m(0)$-semi-norm decay for the corresponding $m\in\{2,\ldots,l_n\}$. To achieve this, we combine the decay estimate \eqref{eq:case3} and \eqref{eq:PtP0est} that gives a lower bound on the $P_n^m(t)$-semi-norm with terms only depending on $P_n^k(0)$-semi-norms ($k\in\{1,\ldots,m\}$). This yields
\begin{align*}
&(1-\theta) |x(t)|^2_{P^m_n(0)} - \left(\frac{(m-1)^2}{\theta} - 1\right) \sum^{m-1}_{k=1} \left(\frac{t^{m-k}}{(m-k)!} \right)^2 |x(t)|^2_{P^k_n(0)} \\
&\leq |x(t)|^2_{P^m_n(t)} = e^{-2\mu t} |x(0)|^2_{P^m_n(0)}.
\end{align*}
Rearranging and dividing by $(1-\theta)$ leads to
\begin{align}\label{eq:Pm0estimate1}
\begin{aligned}|x(t)|^2_{P^m_n(0)} \leq &
\underbrace{\left(\frac{(m-1)^2}{\theta} - 1\right) \frac{1}{1-\theta}}_{\udef{d_m(\theta):=}} \sum^{m-1}_{k=1} \left(\frac{t^{m-k}}{(m-k)!} \right)^2 |x(t)|^2_{P^k_n(0)}\\
&+ \frac{1}{1-\theta} e^{-2\mu t} |x(0)|^2_{P^m_n(0)}.
\end{aligned}
\end{align}
By induction we shall show that, for arbitrary but fixed $n\in I_\mu$ and all corresponding $m\in\{1,\ldots,l_n\}$, there exists a constant $c_m>0$ only depending on $m$, such that
\begin{align}\label{eq:Pinduction}
|x(t)|^2_{P_n^m(0)} \leq \frac{1}{\displaystyle\min_{k=1,\ldots,m}\beta_n^k}c_m (1+t^{2(m-1)}) e^{-2\mu t} |x(0)|^2_{P_n(0)},\quad t\geq 0,
\end{align}
where $P_n(0)=\sum_{m=1}^{l_n} \beta_n^m P_n^m(0)$ by definition \eqref{eq:Pndefective}.

For $m=1$, the matrix $P^m_n$ is not time-dependent and \eqref{eq:case3} immediately yields \eqref{eq:Pinduction} with $c_1=\frac12$.


For the inductive step, we assume the claim is true for all $k\in\{1,\ldots,m\}$ with some $m\geq 1$ and constants $c_k>0$ monotonically increasing in $k$ and start from \eqref{eq:Pm0estimate1}, written for $m+1$:
\begin{align}
|x(t)|&^2_{P^{m+1}_n(0)} \leq d_{m+1}(\theta) \sum^m_{k=1} \left(\frac{t^{m+1-k}}{(m+1-k)!} \right)^2 |x(t)|^2_{P^k_n(0)}\nonumber \\
&\qquad \qquad + \frac{1}{1-\theta} e^{-2\mu t} |x(0)|^2_{P_n^{m+1}(0)} \nonumber \\
&\leq d_{m+1}(\theta) \sum^m_{k=1}  \frac{t^{2(m+1-k)}}{[(m+1-k)!]^2} \frac{1}{\displaystyle\min_{j\in\{1,\ldots,k\}}\beta_n^j}c_k(1+t^{2(k-1)}) e^{-2\mu t} |x(0)|^2_{P_n(0)}\label{eq:PinductionCompute}\\
&\qquad \qquad +\frac{1}{\beta_n^{m+1}}\frac{1}{1-\theta}e^{-2\mu t} |x(0)|^2_{P_n(0)}\nonumber
\end{align}
where \eqref{eq:Pinduction} with $k\in\{1,\ldots,m\}$ was used in the second estimate.

In order to combine both of the terms in \eqref{eq:PinductionCompute} into one summation, we compute $\inf_{\theta\in(0,1)}\max\{ d_{m+1}(\theta), \frac{1}{1-\theta}\}$. For $m=1$, one has $\inf_{\theta\in(0,1)}\max\{ d_{2}(\theta), \frac{1}{1-\theta}\}=2$. For $m>1$, the coefficient $d_{m+1}(\theta)$ has its minimum at $\theta_{\min}=m^2-m\sqrt{m^2-1}$ with value $
d_{m+1}(\theta_{\min})=2m^2+2m\sqrt{m^2-1}-1$. As $d_{m+1}(\theta_{\min})\geq \frac{1}{1-\theta_{\min}}$, one gets
$
\inf_{\theta\in(0,1)}\max\{ d_{m+1}(\theta), \frac{1}{1-\theta}\}=d_{m+1}(\theta_{\min}).$

In total
\begin{align}\label{eq:dmEstimate}
\inf_{\theta\in(0,1)}\max\{ d_{m+1}(\theta), \frac{1}{1-\theta}\}\leq 4m^2-1,\qquad m\geq 1.
\end{align}
Applying this estimate to \eqref{eq:PinductionCompute} and, additionally, using  the  upper bound\linebreak ${\max_{k=1,\ldots,m} t^{2(m+1-k)} + t^{2m}} \leq 2(1+t^{2m})$ for all $t\geq 0$, one gets
\begin{align*}
|x(t)|^2_{P^{m+1}_n(0)} &\leq \frac{1}{\displaystyle\min_{k\in\{1,\ldots,m+1\}}\beta_n^k}\\
&\times \underbrace{2(4m^2-1)c_m\sum_{k=1}^{m+1}\frac{1}{[(m+1-k)!]^2}}_{\udef{c_{m+1}:=}} (1+t^{2m})\,e^{-2\mu t}|x(0)|^2_{P_n(0)},
\end{align*}
which concludes the induction and hence the proof of \eqref{eq:Pinduction} for $m\in\{1,\ldots,l_n\}$.

The constant $c_{m+1}$ for $m\in\{1,\ldots,l_n-1\}$ is given as
\begin{align*}
c_{m+1}&= 2^{m-1} \left(\prod_{j=1}^{m} 4j^2-1\right) \left(\prod_{j=2}^{m+1} \sum_{k=1}^j \frac{1}{[(j-k)!]^2}\right).
\end{align*}
By definition \eqref{eq:defP} the matrix $P(0)$ is given as
\begin{align*}
P(0)=\underbrace{\sum_{n\not\in I_\mu} \beta_n P_n}_{\udef{P_{I^c_\mu}:=}}  +\underbrace{\sum_{n\in I_\mu}\sum_{m=1}^{l_n} \beta_n^m P_n^m(0)}_{\udef{P_{I_\mu}:=}}.
\end{align*}
The first term, $P_{I_\mu^c}$, covers the Cases 1--2. Applying Gronwall's lemma directly to the inequalities \eqref{eq:case1} and \eqref{eq:Pndecay} yields
\begin{align*}
|x(t)|_{P_{I^c_\mu}}^2 \leq  e^{-2 \mu t} |x(0)|_{P_{I^c_\mu}}^2.
\end{align*}
Now, we take a closer look at the decay behavior of solutions with respect to the $P_{I_\mu}$-semi-norm that corresponds to Case 3.
\begin{align}
|x(t)|_{P_{I_\mu}}^2&=\left( \sum_{n\in I_\mu} \beta_n^1 |x(t)|^2_{P_n^1(0)} + \sum_{n\in I_\mu} \sum_{m=2}^{l_n}\beta^m_n |x(t)|^2_{P^m_n(0)}\right). \label{eq:estimatePImu}
\end{align}
The first term includes only semi-norms that are time-independent, i.e. $P_n^1(t)\equiv P_n^1(0)$, and using \eqref{eq:case3} directly gives the decay behavior. For the second term in \eqref{eq:estimatePImu}, we apply \eqref{eq:Pinduction} and get
\begin{align*}
|x(t)|_{P_{I_\mu}}^2&\leq   \sum_{n\in I_\mu} \beta_n^1 e^{-2\mu t}|x(0)|^2_{P_n^1(0)}  \\
&\qquad +\sum_{n\in I_\mu} \sum_{m=2}^{l_n}\frac{\beta^m_n}{\displaystyle\min_{k\in\{1,\ldots,m\}}\beta_n^k} c_m (1+t^{2(m-1)})e^{-2\mu t} |x(0)|^2_{P_n(0)}\\
&\leq  e^{-2\mu t} \sum_{n\in I_\mu}  |x(0)|^2_{P_n(0)} \\
&\qquad +\displaystyle\max_{n\in I_\mu}
\Big[\sum_{m=2}^{l_n}\frac{\beta_n^m}{\displaystyle\min_{k\in\{1,\ldots,m\}}\beta_n^k} c_m\Big] 2(1+t^{2(M-1)})e^{-2\mu t}\sum_{n\in I_\mu} |x(0)|^2_{P_n(0)} \\
&\leq 2c_M\displaystyle\max_{n\in I_\mu}
\Big[\sum_{m=1}^{l_n}\frac{\beta_n^m}{\displaystyle\min_{k\in\{1,\ldots,m\}}\beta_n^k} \Big] (1+t^{2(M-1)})e^{-2\mu t} |x(0)|^2_{P_{I_\mu}}.
\end{align*}
Now, using \eqref{eq:normequiv} for $P(0)$, the decay behavior in the Euclidean norm follows as
\begin{align*}
|x(t)|^2_2&\leq (\lambda_{\min}^{P(0)})^{-1}|x(t)|^2_{P(0)}\\
&=(\lambda_{\min}^{P(0)})^{-1}\left( |x(t)|_{P_{I^c_\mu}}^2+ |x(t)|_{P_{I_\mu}}^2 \right)\\
&\leq  (\lambda_{\min}^{P(0)})^{-1} \Big(e^{-2\mu t} |x(0)|_{P_{I_\mu^c}}^2 \\
&\qquad +
2c_M\,\displaystyle\max_{n\in I_\mu}
\Big[\sum_{m=1}^{l_n}\frac{\beta_n^m}{\displaystyle\min_{k\in\{1,\ldots,m\}}\beta_n^k} \Big] (1+t^{2(M-1)})e^{-2\mu t} |x(0)|^2_{P_{I_\mu}}\Big)\\
&\leq 2(\lambda_{\min}^{P(0)})^{-1} \lambda_{\max}^{P(0)}c_M\,
\displaystyle\max_{n\in I_\mu}
\Big[\sum_{m=1}^{l_n}\frac{\beta_n^m}{\displaystyle\min_{k\in\{1,\ldots,m\}}\beta_n^k} \Big] (1+t^{2(M-1)})e^{-2\mu t} |x(0)|^2_2,
\end{align*}
where the constant $\mathscr{C}:= 2(\lambda_{\min}^{P(0)})^{-1}\lambda_{\max}^{P(0)}c_M\displaystyle\max_{n\in I_\mu}
\Big[\sum_{m=1}^{l_n}\frac{\beta_n^m}{\min_{k\in\{1,\ldots,m\}}\beta_n^k} \Big]$ depends only on the matrix $P(0)$.\qed
\subsection{Proof of Lemma \ref{lem:CDsecond}} \label{pr:CDsecond}
		\begin{proof}
			Since $w_{1,k}^2,w_{1,k}^3$ satisfy \eqref{eq:connectionCw} with $n=1$ and $m=2,3$, their linear combination $\wt{w}_{k}^3$ satisfies
			\begin{align*}
			\frac{d}{dt}\wt{w}_k^3(z,t) = (D_k^H(z)-\overline{\lambda}_k(z))\wt{w}_k^3(z,t).
			\end{align*}
			The computation leading to \eqref{eq:case3} also applies here (with rescaled time $\tau_k= k^2t$) and results in
			\begin{align}\label{eq:tildePtdecay}
			|y_k(z,t)|^2_{\wt{P}^3_k(z,k^2 t)} = e^{-2 k^2 b(z) t} |y_k(z,0)|_{\wt{P}_k^3(z,0)}^2.
			\end{align}

			Now, our goal is an estimate in the $\wt{P}_k^3(z,0)$-semi-norm with help of Lemma \ref{lem:PtP0est}. By definition
			\begin{align*}
			\wt{w}_k^3(z,t) &= \xi_k^1(z,t) w^1_{1,k}(z,0)+  \xi_k^2(z,t) w^2_{1,k}(z,0)
			+ \xi_k^3 \wt{w}^3_k(z,0),
			\end{align*}
			with the polynomials
			\begin{align*}
			\xi_{k}^1(z,t)=\frac{t^2}{2} + \frac{\partial^2_z \overline{\lambda}_k(z)}{2(\partial_z \overline{\lambda}_k(z))^2}t, \quad \xi^2_k(z,t) = t \quad\text{and}\quad \xi^3_k(z,t) = 1.
			\end{align*}
			Lemma \ref{lem:PtP0est} yields
			\begin{align}\label{eq:xiupper}
			\begin{aligned}|x|^2_{\wt{P}_{k}^3(z,t)} &\geq (1-\theta) |x|^2_{\wt{P}_k^3(z,0)} \\
			&\qquad - \left(\frac{4}{\theta} - 1\right) \left[ |\xi^1_{k}(z,t)|^2 |x|^2_{P^1_{1,k}(z,0)} + |\xi^2_k(z,t)|^2 |x|^2_{P^2_{1,k}(z,0)} \right]
			\end{aligned}
			\end{align}
			for any (fixed) $x\in\C^3$, $\theta\in(0,1)$, $t\geq 0$. Replace $x$ by a solution $y_k(z,t)$ to \eqref{eq:secorderODE} (rescaling $\xi_k(z,t)$ to $\xi_k(z,k^2t)$ to account for the prefactor $k^2$ in the ODE). Then (in analogy to the estimate \eqref{eq:Pm0estimate1}), using \eqref{eq:tildePtdecay} leads to
			\begin{align}\label{eq:contPtilde}
			\begin{aligned}|y_k(z,t)|^2_{\tilde{P}^3_k(z,0)}& \leq d_3(\theta)\left[ |\xi^1_{k}(z,k^2t)|^2 |y_k(z,t)|^2_{P^1_{1,k}(z,0)} +  |\xi^2_{k}(z,k^2t)|^2 |y_k(z,t)|^2_{P^2_{1,k}(z,0)} \right] \\
			&\qquad + \frac{1}{1-\theta} e^{-2k^2 b(z)t} |y_k(z,0)|^2_{\tilde{P}^3_k(z,0)},
			\end{aligned}
			\end{align}
			with $d_3(\theta):=\frac{4-\theta}{\theta(1-\theta)}$ as in \eqref{eq:Pm0estimate1}. Minimizing in $\theta$ with estimate \eqref{eq:dmEstimate} yields
			\begin{align*}
			\inf_{\theta\in(0,1)}\max\left\{ d_3(\theta), \frac{1}{1-\theta}\right\} \leq 15.
			\end{align*}
			Next we use the estimates \eqref{eq:Pk1}, \eqref{eq:Pk2} and the fact that $\wt{P}^3_k(z,0)\leq \frac{1}{4|\partial_z \lambda_k(z)|^4} I$ to proceed with \eqref{eq:contPtilde}:
			\begin{align*}
			|y_k(z,t) |^2_{\wt{P}_k^3(z,0)} &\leq 15 \left[ \Big|\frac{k^4t^2}{2} + k^2t\frac{\partial^2_z \overline{\lambda}_k(z)}{2(\partial_z \overline{\lambda}_k(z))^2}\Big|^2  +  \frac{6 k^4t^2}{\min\{1,|\partial_z \lambda_k(z)|^2\}} (1+k^4t^2 )\right.\\
			&\qquad \qquad \qquad \qquad \qquad \qquad\quad  \left.  +\frac{1}{4|\partial_z \lambda_k(z)|^4}\right] e^{-2k^2 b(z) t} |y_k(z,0)|^2_2 \nonumber\\
			&\leq 15\frac{1+|\partial_z^2 \lambda_k(z)|^2}{\min\{1,|\partial_z \lambda_k(z)|^4\}} \left[  \frac{k^8t^4}{4} + \frac{k^4t^2}{4} + \frac{k^6 t^3}{2}\right.\\
			& \qquad \qquad \qquad \qquad \quad\quad \left .+6 (k^4t^2+k^8t^4)+\frac14 \right] e^{-2k^2 b(z) t} |y_k(z,0)|^2_2\\
			&\leq 146.25 \frac{1+|\partial_z^2 \lambda_k(z)|^2}{\min\{1,|\partial_z \lambda_k(z)|^4\}} (1+k^8t^4)e^{-2k^2 b(z)t} |y(z,0)|^2_2.
			\end{align*}\qed
	\end{proof}

\subsection{Proof of Lemma \ref{lem:k4}}\label{pr:k4}
\begin{proof}
	To show that the matrix
	\begin{align}
	\wt{P}(z):=
	\begin{pmatrix}
	~1 ~~~~& 0 ~~~~&0\\
	~0~~~~&1~~~~&0\\
	~0~~~~&0~~~~&\frac12 \min\{1,\frac{1}{\alpha(z)^4}\}
	\end{pmatrix}
	\end{align}
	satisfies
	\begin{align}\label{eq:cppck4}
	C_k^H(z) \wt{P}(z) + \wt{P}(z) C_k(z) \geq \frac{1}{2} \wt{P}(z), \quad z\in\R, k\geq 4,
	\end{align}
	we show that
	\begin{align*}
	A_k(z)&:=C_k^H(z) \wt{P}(z) +\wt{P}(z) C_k(z) - \frac{1}{2} \wt{P}(z)\\
	&=
	\begin{pmatrix}
	\frac{3}{2}-\frac{4}{k}&~~~ 0 ~~~& \frac12 \gamma(k)\min\{\alpha(z),\frac{1}{\alpha(z)^3}\}\\
	0 & \frac{3}{2} & \frac12 \min\{\alpha(z),\frac{1}{\alpha(z)^3}\}\\
	\frac12 \gamma(k)\min\{\alpha(z),\frac{1}{\alpha(z)^3}\} &~~~ \frac12 \min\{\alpha(z),\frac{1}{\alpha(z)^3}\}~~~ & \frac34 \min\{1,\frac{1}{\alpha(z)^4}\}
	\end{pmatrix}
	\end{align*}
	is positive definite.
	
	As $k\geq 4$, the first two leading minors are positive. The third minor is positive, if
	\begin{align*}
	\det A_k(z)& = \frac{9}{8}\left(\frac32-\frac{4}{k}\right)\min\{1 , \frac{1}{\alpha(z)^4}\}-\frac{3}{8} \gamma(k)^2 \min\{\alpha(z)^2, \frac{1}{\alpha(z)^6}\} \\
	&\qquad  - \frac{1}{4}\left(\frac32-\frac{4}{k}\right) \min\{\alpha(z)^2 ,\frac{1}{\alpha(z)^6}\}>0
	\end{align*}
	for all $z\in\R$, $k\geq 4$. For all $k\geq 4$, we distinguish the following two cases:
	
	\emph{For $z\in\R$ such that $|\alpha(z)|\geq 1$} the condition $\det A_k(z)>0$ is equivalent to
	\begin{align*}
	f(k,\alpha(z)^2,\gamma(k)):= \left(\frac32-\frac{4}{k}\right)\Big(\frac{9}{4}-\frac{1}{2\alpha(z)^2}\Big)-  \frac{3}{4\alpha(z)^2}\gamma(k)^2>0.
	\end{align*}
	The function $[4,\infty)\times [1,\infty) \times [\sqrt{\frac{2}{3}},1) \ni (k,\alpha^2,\gamma)\mapsto f(k,\alpha^2,\gamma)$,  is monotonously increasing in $k$ and $\alpha^2$ but monotonously decreasing in $\gamma$, hence
	\begin{align*}
	f(k,\alpha(z)^2,\gamma(k)) \geq f(4,1,1) = \frac18 >0.
	\end{align*}

	\emph{For $z\in\R$ such that $|\alpha(z)|\leq 1$} the condition $\det A_k(z)>0$ is equivalent to
	\begin{align*}
	g(k,\alpha(z)^2,\gamma(k)):= \left(\frac32-\frac{4}{k}\right)\Big(\frac{9}{4}-\frac12 \alpha(z)^2\Big) -  \frac34\gamma(k)^2 \alpha(z)^2 >0.
	\end{align*}
	The function $[4,\infty)\times [0,1] \times  [\sqrt{\frac{2}{3}},1) \ni (k,\alpha^2,\gamma)\mapsto g(k,\alpha^2,\gamma)$ is monotonously increasing in $k$ and monotonously decreasing in $\alpha^2$ and $\gamma$, hence
	\begin{align*}
	g(k,\alpha(z)^2,\gamma(k))\geq g(4,1,1) &= \frac{1}{8}>0.
	\end{align*}
	
	This proves the matrix inequality \eqref{eq:cppck4}. With a similar calculation as \eqref{eq:case1} (and the rescaling $t\mapsto k\alpha(z)t$), this implies
	\begin{align*}
	|y_k(z,t)|_{\wt{P}(z)}^2 \leq e^{-\frac12 ka(z)t}|y_k(z,0)|^2_{\wt{P}(z)}, \quad t\geq 0, k\geq 4.
	\end{align*}
	With $\frac12 \min\{1,\frac{1}{\alpha(z)^4}\}I\leq \wt{P}(z)\leq I$, $z\in\R$, we obtain
	\begin{align*}
	|y_k(z,t)|^2_2 &\leq 2 \max\{1,\alpha(z)^4\} e^{-2a(z)t}|y_k(z,0)|^2_2, \quad t\geq 0, k\geq 4,
	\end{align*}
	from which the desired result follows.\qed
\end{proof}

\bibliographystyle{spmpsci} 

\begin{thebibliography}{1}

\providecommand{\url}[1]{{#1}}
\providecommand{\urlprefix}{URL }
\expandafter\ifx\csname urlstyle\endcsname\relax
  \providecommand{\doi}[1]{DOI~\discretionary{}{}{}#1}\else
  \providecommand{\doi}{DOI~\discretionary{}{}{}\begingroup
  \urlstyle{rm}\Url}\fi

\bibitem{Achleitner2016}
Achleitner, F., Arnold, A. and Carlen, E.A.: On linear hypocoercive {BGK} models.
\newblock In: From particle systems to partial differential equations. {III},
  Springer Proc. Math. Stat., vol.\ 162, 1--37. Springer (2016).

\bibitem{Achleitner2018}
	Achleitner, F., Arnold, A. and Carlen, E.A.: On multi-dimensional hypocoercive BGK models. Kinetic \& Related Models, vol.\ 11 (4) 953--1009 (2018).


\bibitem{AAS}
Achleitner, F., Arnold, A. and St{\"{u}}rzer, D.: Large-Time Behavior in Non-Symmetric Fokker--Planck Equations.
Rivista di Matematica della Universit\`a di Parma, vol. 6, 1--68, (2015).

\bibitem{AEW17}
Arnold, A., Einav, A. and W{\"{o}}hrer, T.: {On the rates of decay to equilibrium
  in degenerate and defective Fokker--Planck equations}.
\newblock J.\ Differential Equations, vol. 264 (11), 6843--6872, (2018).


\bibitem{Arnold2014}
Arnold, A. and Erb, J.: {Sharp Entropy Decay for Hypocoercive and Non-Symmetric
  Fokker-Planck Equations With Linear Drift}.
\newblock  Preprint, arXiv:1409.5425 (2014).

\bibitem{AMTU01}
Arnold, A., Markowich, P., Toscani, G. and Unterreiter, A.: On convex Sobolev inequalities and the rate of convergence to equilibrium for Fokker--Planck type equations, Comm. Partial Differential Equations, vol. 26, 43--100, (2001).

\bibitem{Arnold1978}
Arnold, V.I.: Ordinary differential equations.
\newblock MIT Press, Cambridge, Mass.-London,
\newblock Translated from the Russian and edited by Richard A. Silverman,  (1978).

\bibitem{BGK}
Bhatnagar, P. L., Gross, E. P. and Krook, M.: {A Model for Collision Processes in Gases. I. Small Amplitude Processes in Charged and Neutral One-Component Systems}.
\newblock Phys. Rev., vol.\ 94 (3), 511--525, (1954).

\bibitem{MB}
Briant, M.:
From the Boltzmann equation to the incompressible Navier--Stokes equations on the torus: A quantitative error estimate,
\newblock   J. Differential Equations, vol.\ 259 (11), 6072--6141, (2015).

\bibitem{DeVi05} Desvillettes, L. and Villani, C.:  On the trend to global equilibrium for spatially inhomogeneous kinetic systems: the Boltzmann equation, Invent. Math., vol.\ 159, 245--316, (2005).

\bibitem{Dolbeault2015}
Dolbeault, J., Mouhot, C. and Schmeiser, C.: {Hypocoercivity for linear kinetic
  equations conserving mass}.
\newblock Trans. Amer. Math. Soc., vol.\ 367 (6),  3807--3828 (2015).


\bibitem{gunzburger2014stochastic}
Gunzburger, M., Webster, C. and Zhang, G.:
Stochastic finite element methods for partial differential equations with random input data,
Acta Numerica, vol.\ 23, 521--650,  (2014).


\bibitem{Guo-NS}
Guo, Y.:
Boltzmann diffusive limit beyond the Navier-Stokes approximation,
Comm. Pure  Appl. Math., vol.\ 59, 626--687, (2006).


\bibitem{Hu2017}
Hu, J. and Jin, S.: Uncertainty quantification for kinetic equations.
in ``Uncertianty Quantification for Kinetic and Hyperbolic Equations'',
\newblock SEMA-SIMAI Springer Series (ed. S. Jin and L. Pareschi), 193--229, (2018).


\bibitem{JLM}
Jin, S., Liu, J.-G. and Ma, Z.:
Uniform spectral convergence of the stochastic {G}alerkin method for the linear transport equations with random inputs in diffusive regime and a micro-macro decomposition based asymptotic preserving method,
\newblock Res. Math. Sci., vol.\ 4 (15), (2017).

\bibitem{jin2017asymptotic}
Jin, S., Liu, L.:
An asymptotic-preserving stochastic Galerkin method for the semiconductor Boltzmann equation with random inputs and diffusive scalings, SIAM Multiscale Model. Simul., vol.\ 15, (1), 157--183, (2017).


\bibitem{JinZhu}
Jin, S. and Zhu, Y.:
Hypocoercivity and uniform regularity for the {V}lasov-{P}oisson-{F}okker-{P}lanck system with uncertainty and multiple Scales,
\newblock SIAM J. Math. Anal., vol.\ 50, 1790--1816, (2018).

\bibitem{jost}
Jost, J.: Riemannian geometry and geometric analysis,
Universitext, (2017).

\bibitem{JuengelBook}
J\"{u}ngel, A.: Entropy Methods for Diffusive Partial Differential Equations,
\newblock BCAM Springer Briefs, (2016).

\bibitem{LiWang}
Li, Q. and Wang, L.:
Uniform regularity for linear kinetic equations with random input based on
hypocoercivity,
\newblock SIAM/ASA J. Uncertainty Quantification, vol.\ 5 (1), 1193--1219, (2017).


\bibitem{LiuJin}
Liu, L. and Jin, S.:
Hypocoercivity based sensitivity analysis and spectral convergence of the stochastic Galerkin approximation to collisional kinetic equations
with multiple scales and random inputs,
\newblock Multiscale Model. Simul., vol.\ 16 (3), 1085--1114, (2017).

\bibitem{Monmarche2015}
Monmarch{\'{e}}, P.: {Generalized $\Gamma$ calculus and application to
  interacting particles on a graph},
\newblock Potential Analysis, doi.org/10.1007/s11118-018-9689-3, (2018).

\bibitem{MN}
Mouhot, C. and Neumann, L.,:
Quantitative perturbative study of convergence to equilibrium for collisional kinetic models in the torus,
\newblock  Nonlinearity, vol.\ 19 (4), 969--998, (2006).

\bibitem{Risken}
Risken, H.: The Fokker-Planck equation. Methods of solution and applications, Springer-Verlag, (1989).

\bibitem{ShuJin}
Shu, R.W. and Jin, S.:
Uniform regularity in the random space and spectral accuracy of the stochastic
Galerkin method for a kinetic-fluid two-phase flow model with
random initial inputs in the light particle regime,
\newblock preprint, (2017).

\bibitem{smithbook}
Smith, R.:
Uncertainty quantification: theory, implementation, and applications,
 SIAM, (2013).

\bibitem{To} Toscani, G.: Kinetic approach to the
asymptotic behaviour of the solution to diffusion equations, Rend.
di Matematica 16, 329--346, (1996).

\bibitem{Villani-book}
Villani, C.: Hypocoercivity,
\newblock American Mathematical Soc., (2009).



\end{thebibliography}


\end{document}